\definecolor{shadecolor}{rgb}{1,0.9,0.7}
\newtheorem{theorem}{Theorem}[section]
\newtheorem{lemma}[theorem]{Lemma}
\newtheorem{lemma-definition}[theorem]{Lemma-Definition}
\newtheorem{proposition}[theorem]{Proposition}
\newtheorem{corollary}[theorem]{Corollary}
\theoremstyle{definition}
\newtheorem{definition}[theorem]{Definition}
\theoremstyle{remark}
\newtheorem{remark}[theorem]{Remark}
\numberwithin{equation}{section}
\numberwithin{figure}{section}
\newcommand{\LL} {\mathbb{L}}
\newcommand{\NN} {\mathbb{N}}
\newcommand{\QQ} {\mathbb{Q}}
\newcommand{\RR} {\mathbb{R}}
\newcommand{\TT} {\mathbb{T}}
\newcommand{\ZZ} {\mathbb{Z}}
\newcommand {\shC}  {\mathcal{C}}
\newcommand {\shE}  {\mathcal{E}}
\newcommand {\shF}  {\mathcal{F}}
\newcommand {\shH}  {\mathcal{H}}
\newcommand {\shHom} {\mathcal{H}\!\text{\textit{om}}}
\newcommand {\shL}  {\mathcal{L}}
\newcommand {\shM}  {\mathcal{M}}
\newcommand {\shN}  {\mathcal{N}}
\newcommand {\shO}  {\mathcal{O}}
\newcommand {\shQ}  {\mathcal{Q}}
\newcommand {\shR}  {\mathcal{R}}
\newcommand {\shT}  {\mathcal{T}}
  \newcommand {\fC}  {\mathfrak{C}}
\newcommand {\foM}  {\mathfrak{M}}  \newcommand {\fM}  {\mathfrak{M}}
\newcommand {\fom}  {\mathfrak{m}}
\newcommand {\Aut}  {\operatorname{Aut}}
\newcommand {\eps}  {\varepsilon}
\newcommand {\ev}  {\operatorname{ev}}
\newcommand {\Ext}  {\operatorname{Ext}}
\newcommand {\gp}  {{\operatorname{gp}}}
\newcommand {\Hom}  {\operatorname{Hom}}
\newcommand {\hra} {\hookrightarrow}
\newcommand {\id}  {\operatorname{id}}
\newcommand {\im}  {\operatorname{im}}
\newcommand {\kk} {\Bbbk}
\newcommand {\bk} {{\mathbf{k}}}
\newcommand {\ubk} {{\underline{\mathbf{k}}}}
\newcommand {\lcm}  {\operatorname{lcm}}
\newcommand {\lra}  {\longrightarrow}
\renewcommand{\O}  {\mathcal{O}}
\newcommand {\op}  {\operatorname}
\newcommand {\pr}  {\operatorname{pr}}
\newcommand {\ra}  {\to}
\newcommand {\sat}  {{\operatorname{sat}}}
\newcommand {\Spec} {\operatorname{Spec}}
\newcommand {\sra} {\twoheadrightarrow}
\newcommand {\Trop}  {\operatorname{Trop}}
\def\mydate{\ifcase\month \or January\or February\or March\or
April\or May\or June\or July\or August\or September\or October\or 
November\or December\fi \space\number\day,\space\number\year}
\newcommand{\uC}{\underline{\shC}}
\newcommand{\uD}{\underline{D}}
\newcommand{\uX}{\underline{X}}
\newcommand{\uS}{\underline{S}}
\newcommand{\uT}{\underline{T}}
\newcommand{\oM}{\overline{\mathcal{M}}}
\newcommand{\ka }{{\alpha}}
\newcommand{\cF}{\mathcal{F}}
\newcommand{\cL}{\mathcal{L}}
\newcommand{\cM}{\mathcal{M}}
\newcommand{\cO}{\mathcal{O}}
\newcommand{\cT}{\mathcal{T}}
\newcommand{\Spl}{\cL og_{\bk} ^{\mathrm{spl}}}
\newcommand{\PY}{\cL og _{\mathscr{M} _{g, n}}\times _{\cL og _{\ubk} } \Spl}
\newcommand{\oF}{\overline{\shF}}
\newcommand{\oQ}{\overline{\shQ}}
\newcommand{\fB}{\mathfrak{B}}
\newcommand{\spl}{\mathrm{spl}}
\newcommand{\ot}{\otimes}
\newcommand{\one}{\mathbbm{1}}
\newcommand{\tGamma}{{\tilde{\Gamma}}}
\newcommand{\mff}{\mathfrak{f}}
\newcommand{\mfs}{\mathfrak{s}}
\newcommand{\bGamma}{\bar{\Gamma}}
\newcommand{\fMG}{\fM _{\Gamma}}
\newcommand{\tOmega}{\tilde{\Omega}}
\newcommand{\cA}{\mathcal{A}}
\newcommand{\cC}{\mathcal{C}}
\newcommand{\cE}{\mathcal{E}}
\newcommand{\Kom}{\mathrm{Kom}}
\newcommand{\bfk}{\mathbf{k}} 
\newcommand{\ushC}{\underline{\cC}}
\begin{document}


\title
[The degeneration formula for stable log maps]
{The degeneration formula for stable log maps}
\author{Bumsig Kim, Hyenho Lho, Helge Ruddat}

\begin{abstract} 
We give a direct proof for the degeneration formula of Gromov-Witten invariants including its cycle version
for degenerations with smooth singular locus in the setting of stable log maps of Abramovich-Chen,
Chen, Gross-Siebert.
\end{abstract}

\address{\tiny School of Mathematics, Korea Institute for Advanced Study,
85 Hoegiro Dongdaemun-gu, Seoul 02455, Republic of Korea}
\email{bumsig@kias.re.kr}
\address{\tiny Department of Mathematics, CNU, 99 Daehak-ro, Yuseong-gu, Daejeon, 34134, Republic of Korea}
\email{hyenho@cnu.ac.kr}
\address{\tiny JGU Mainz,
Institut f\"ur Mathematik,
55128 Mainz, 
Germany}
\thanks{B.K. was supported by KIAS individual grant MG016404. H.R. was supported by DFG Emmy-Noether grant RU 1629/4-1. H.L. was supported by the grant ERC-2012-AdG-320368-MCSK}
\email{ruddat@uni-mainz.de}

\date{\today}

\maketitle
\setcounter{tocdepth}{1}
\tableofcontents

\section*{Introduction}
Gromov-Witten invariants are constant in smooth families and more generally in log smooth families if one considers \emph{logarithmic Gromov-Witten invariants} instead \cite{AC,Ch,GS},\cite[Thm. A.3]{MR}.
A one-parameter normal crossing degeneration, also known as \emph{semi-stable degeneration}, is such a log smooth family. 
We here consider the case where the central fibre $X$ consists of only two smooth irreducible components $X_1,X_2$ that meet in a smooth divisor $D$.
In this case, the log Gromov-Witten invariants of $X$ decompose into log Gromov-Witten invariants on the components with log structure given by the divisor $D$ respectively.
This result is the so-called \emph{degeneration formula} that was discovered and proven in pioneering works with the framework of \emph{expanded relative stable maps}: in the symplectic geometry setup by A.-M. Li and Y. Ruan \cite{LR}, 
by E. Ionel and T. Parker \cite{IP};
in algebraic geometry by J. Li \cite{Li} and D. Abramovich and B. Fantechi \cite{AF}. Q. Chen \cite{ChE} proved a hybrid version using stable log maps in the sense of \cite{ChE, Ki}. 
All of these results use target expansions. 
We give a proof in Theorem~\ref{mainthm-cycle} and Theorem~\ref{mainthm-numeric} below that goes without expansions.
The result itself is not new as it follows via comparison theorems \cite{AMW} from the prior works, but we decided to compose a direct proof in order to facilitate the arguments in \cite{GGR,GRZ}. The splitting stack in \S\ref{sec-splitting-stack} is novel.
Our gluing result of \S\ref{gluing} has been used in \cite{Bou,Bou2,Gr}. 
We give detailed arguments for the comparison results of virtual classes
by proving the commutativity of the relevant maps of triangles, see \eqref{comp_dist}, \eqref{two_relatives}.
Novel is the elaboration of the tropical point of view for the degeneration formula inspired by \cite{NS,MR,MR2}.
The tropical point of view in log Gromov-Witten theory was first established in \cite{GS}.

A decomposition formula for general log smooth fibres has been given in \cite{ACGS}. A symplectic geometry approach has been followed in \cite{Pa,Te} with a more general degeneration formula in \cite{Pa2}. More general gluing formulae in log geometry has been obtained in \cite{ACGS2,Wu} and a degeneration formula in \cite{Ran}.

\subsection{Acknowledgments}
We thank Michel van Garrel and Tim Gräfnitz for advice on the presentation. Mark Gross pointed out an example that shows that the seemingly mild generalization of the degeneration formula to degenerations of relative invariants is just as involved as a generalization to general normal crossing degenerations. We also thank Yuki Hirano and Tom Graber for helpful technical advice.

\subsection{Conventions} 
We refer to \cite{K.Kato} for the basics of log geometry. All log schemes will be fine and saturated and we denote them by undecorated letters like $S$. 
We refer to the underlying scheme by $\uS$ and occasionally, by abuse of notation, we also refer to $\uS$ as the scheme with trivial log structure.
For $\uD\subset \uX$ a subvariety we denote the pullback of the log structure from $X$ to $D$ by $\shM_X|_D$.
We use $\shM$ to refer to monoid sheaves and $\mathscr{M}$ to refer to moduli stacks, e.g.,~$\mathscr{M}_{g,n}(X/B,\beta)$ 
denotes the moduli stack of $n$-marked basic stable log maps of genus $g$ and class $\beta$ to a target log space $X$ that is log smooth over $B$. 
We will sometimes use the notation $\mathscr{M}_{g,S}(X/B,\beta)$ for some finite set $S$ that is used to label the markings of the stable maps. 
With few exceptions clear from the context, curves for us will be connected.
Out of the $n$ markings, some may have prescribed contact orders to strata in $X$ and this is a part of the data of $\beta$.
For a monoid $M$, we denote its Grothendieck group by $M^\gp$, similarly for sheaves of monoids. 
We set $M^\vee:=\Hom(M,\NN)$, denote by $M[1]$ the set of generators of dimension one faces of $M$ and for $m\in M$, we set $m^\perp=\{n\in M^\vee| n(m)=0\}$. 
For a graph $\Gamma$, we let $E(\Gamma)$ denote the set of its edges. 
We work over a fixed field $\kk$ of characteristic zero. When we refer to \emph{a point}, it will be implicit that we mean a geometric point.

\section{Geometric setup and the main result}  \label{sec-setup-results}
\subsection{Semi-stable degenerations}
\label{sec-semistable}
Consider a semi-stable degeneration $\pi:\frak{X}\ra B$, i.e., a projective surjection from a smooth scheme $\frak{X}$ to a smooth one-dimensional scheme $B$ with smooth fibres away from a point $b$ and
$X=\pi^{-1}(b)$ is simple normal crossings. We assume $X$ consists of two smooth components $X_1,X_2$ that meet in a smooth subvariety $D$ that is a divisor in each of $X_1,X_2$. 

The divisor $X\subset\frak{X}$ defines a divisorial log structure on $\frak{X}$, concretely
it is given by the monoid sheaf $\shM_{\frak{X}}:=\shO^\times_{\frak{X}\setminus X}\cap \shO_{\frak{X}}$ with its inclusion in $\shO_{\frak{X}}$.
We analogously obtain a divisorial log structure on $B$ by $\shM_B:=\O^\times_{B\setminus \{b\}}\cap \shO_B$ that maps into $\shM_\frak{X}$ under $\pi^*$, so we turned $\pi$ into a log map which is in fact log smooth, even over $b$. By \cite{BF} and \cite[Theorem A.3]{MR} the log Gromov-Witten invariants of all fibres of $\pi$ agree. 
The main purpose of the degeneration formula is to compute these invariants on the special fibre $X$. 
Henceforth, we will therefore forget about $\pi$ and only consider a log smooth $X\ra b$ that is decomposed as in this degeneration.

\subsection{Log smooth target $X$}
\label{subsec-setup}
We let $\bk:=\Spec (\NN\stackrel{\tiny 1\mapsto 0}{\lra} \kk)$ denote the standard log point. 
(This can be thought of as $b$ above and it now comes with a distinguished chart.) 
We denote its underlying point scheme by $\ubk=\Spec \kk$.

Throughout, we fix a log smooth morphism $X\ra \bk$ where the underlying scheme decomposes as $\uX = \uX_1\sqcup _{\underline D} \uX_2$ in smooth irreducible components $\uX_i$ and $\underline D$ is the smooth connected singular locus of $\uX$. We assume that the log structure is of \emph{semi-stable type} which means that 
$X\ra \bk$ is strict away from $D$ and the stalks of the \emph{characteristic} $\overline\shM_{{X}}:=\shM_{{X}}/\shO_{{X}}$ are isomorphic to $\NN^2$ at points in $D$.
Unwinding the definitions we obtain the following standard fact.
\begin{lemma} 
\label{DF-log} 
Let $\uX = \uX_1\sqcup _{\underline D} \uX_2$ be a scheme over $\kk$. 
Giving a log smooth morphism $X\ra \bk$ of semi-stable type with underlying variety $\uX$ is equivalent to giving two line bundles $\shL_1,\shL_2$ on $\uX$ together with maps $s_i:\shL_i\ra\shO_X$ and a global section $\pi\in\Gamma(\uX,\shL_1\otimes \shL_2)$ such that 
\begin{enumerate}
\item $\shL_1|_{X_2}\stackrel{s_1|_{X_2}}{\lra}\shO_{X_2}$ is injective and identifies $\shL_1|_{X_2} = \shO_{X_2}(-D)$, and similarly with indices 1,2 interchanged,
\item $\pi$ trivializes $\shL_1\otimes \shL_2\cong\shO_X$ and
\item $(s_1\otimes s_2)(\pi)=0$.
\end{enumerate}
\end{lemma}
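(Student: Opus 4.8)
The plan is to match the two sides via the Deligne--Faltings dictionary between fine log structures of rank $\le 1$ on $\uX$ and pairs $(\shL,s\colon\shL\to\shO_X)$ consisting of a line bundle and an $\shO_X$-linear map (see \cite{K.Kato}); the hypotheses on $X\to\bk$ enter only through the local normal form of a log smooth morphism of semi-stable type.

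\textit{From $X\to\bk$ to the data.} First I would recall that $X\to\bk$ is, \'etale locally near $\underline D$, the standard model $\Spec\kk[u,v,w_1,\dots,w_r]/(uv)\to\bk$ with $t\mapsto uv$, and strict with smooth underlying morphism near $\uX_i\setminus\underline D$. Consequently $\ol\shM_X$ is \'etale-locally constant: along $\underline D$ it equals $\NN^2$, generated by the classes $e_1,e_2$ of the two branches $\uX_1,\uX_2$ of the double locus; along $\uX_i\setminus\underline D$ it equals $\NN$, generated by $e_i$ (with $e_j=0$ for $j\ne i$); and the structure morphism carries the generator of $\ol\shM_\bk$ to $e_1+e_2$ (which restricts to $e_i$ along $\uX_i\setminus\underline D$). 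Because $\uX_1\ne\uX_2$ are distinct irreducible components and $\underline D$ is connected, ``the branch along $\uX_i$'' makes sense globally, so the rank one sub log structure $\shM^{(i)}\subseteq\shM_X$ it generates is defined over all of $\uX$ (being $\shM_X$ over $\uX_i$ and trivial over $\uX_j\setminus\underline D$), and $\shM^{(1)}\oplus_{\shO_X^\times}\shM^{(2)}\to\shM_X$ is an isomorphism. Let $(\shL_i,s_i)$ be the pair attached to $\shM^{(i)}$; since near $\underline D$ the map $s_i$ is multiplication by the branch equation of $\uX_i$, one reads off that $s_i$ vanishes on $\uX_i$, is an isomorphism over $\uX_j\setminus\underline D$, and on $\uX_j$ ($j\ne i$) is an injection with image $\shO_{\uX_j}(-D)$ — the last assertion being (1). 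By multiplicativity of the dictionary, the rank one sub log structure of $\shM_X$ generated by $e_1+e_2$ is the pair $(\shL_1\otimes\shL_2,\,s_1\otimes s_2)$; but it also equals the image of $f^\flat\colon f^*\shM_\bk\to\shM_X$, which is injective (as $\ol\shM_X$ is integral and $e_1+e_2\ne0$), and $f^*\shM_\bk$ is the pair $(\shO_X,0)$. Hence $(\shL_1\otimes\shL_2,s_1\otimes s_2)\cong(\shO_X,0)$: the isomorphism on line bundles is a trivialization $\pi$ of $\shL_1\otimes\shL_2$, which is (2), and it turns $s_1\otimes s_2$ into the zero map, i.e.\ $(s_1\otimes s_2)(\pi)=0$, which is (3). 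Explicitly, $\pi$ is the image in $\shL_1\otimes\shL_2$ of the global section $f^\flat(1)$ of $\shM_X$.

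\textit{From the data back to $X\to\bk$.} Conversely, given $(\shL_i,s_i,\pi)$ satisfying (1)--(3) on the fixed $\uX$, let $\shM^{(i)}$ be the log structure attached to $(\shL_i,s_i)$ and put $\shM_X:=\shM^{(1)}\oplus_{\shO_X^\times}\shM^{(2)}$; it is fine and saturated since this holds stalkwise ($\ol\shM_X$ being $0$, $\NN$, or $\NN^2$ on stalks). The trivialization $\pi$ is a nowhere-vanishing global section of $\shL_1\otimes\shL_2$, hence a global section $\rho$ of the rank one sub log structure of $\shM_X$ attached to $(\shL_1\otimes\shL_2,s_1\otimes s_2)$, with $\alpha(\rho)=(s_1\otimes s_2)(\pi)=0$ by (3); so $1\mapsto\rho$ defines a morphism of log structures $f^*\shM_\bk\to\shM_X$, i.e.\ a log morphism $f\colon X\to\bk$. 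From (1) and (3) one then reads off that $s_1$ vanishes on $\uX_1$, is an isomorphism over $\uX_2\setminus\underline D$, and has image $\shI_{\uX_1}$ near $\underline D$ (and symmetrically for $s_2$), whence $\ol\shM_X\cong\NN^2$ along $\underline D$ with structure class $(1,1)$ and $\ol\shM_X\cong\NN$ along $\uX_i\setminus\underline D$ with structure class the generator; choosing charts locally then identifies $f$ with the standard semi-stable model, so $f$ is log smooth of semi-stable type. Since the Deligne--Faltings dictionary is an equivalence of categories, the two constructions are inverse to one another and compatible with isomorphisms, giving the claimed equivalence.

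\textit{Main obstacle.} The only non-formal input is the first step: extracting the explicit shape of $\ol\shM_X$ and of the structure class from the bare log smoothness of $X\to\bk$, which is the local structure theory of log smooth morphisms in its simplest semi-stable case. After that the argument is bookkeeping with the Deligne--Faltings correspondence, the sole delicate point being that the two branches of the double locus must be separated \emph{globally} — which is precisely what $\uX_1\ne\uX_2$ and the connectedness of $\underline D$ ensure.
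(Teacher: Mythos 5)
Your argument is correct and follows essentially the same route as the paper's own proof: both identify the local chart $\NN^2\to\shO_X$ near $D$, conclude that $\oM_X$ is globally generated so that Kato's Deligne--Faltings correspondence applies, read off $(\shL_i,s_i)$ as the rank-one pieces and $\pi$ from the structure map to $\bk$, and reverse the construction via the pushout log structure $\shM^{(1)}\oplus_{\shO_X^\times}\shM^{(2)}$ with $1\mapsto(1,1,\pi)$. Your version merely narrates the stalk-wise picture and the verification of log smoothness in the converse direction in more detail (with one small imprecision: $\shM^{(i)}$ is not all of $\shM_X$ over $\uX_i$ but only over $\uX_i\setminus\uD$, since $\oM_X|_{\uD}$ has rank two), which does not affect the argument.
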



\begin{remark}
If $X$ is the central fibre of a family $\pi:\frak{X}\ra B$ as before, 
then we find $\shL_i=\shO_{\frak{X}}(-X_i)|_X$ with $s_i$ the restriction of the inclusion $\shO_{\frak{X}}(-X_i)\hra \shO_{\frak{X}}$ to $X$ and 
$\pi$ defines a section of $\shO_{\frak{X}}(-X_1-X_2)$ over an \'etale neighbourhood of $b$, so indeed $\pi\in\Gamma(\uX,\shL_1\otimes \shL_2)$.
\end{remark}

\begin{remark} 
A scheme of the form $\uX = \uX_1\sqcup _{\underline D} \uX_2$ permits a lift to a log smooth morphism $X\ra \bk$ if and only if $\shT^1:=\Ext(\Omega_{\uX},\shO_{\uX})$ has a nowhere vanishing section. More generally, if $\shT^1$ has a section with smooth zero locus then $\uX$ can be upgraded to a log toroidal morphism $X\ra \bk$, see \cite{FFR}.
\end{remark}

We denote by $\iota_i: \uX_i\ra \uX$ the natural inclusions.
Using the natural surjection
\begin{equation}
\label{global-generation}
\NN^2\ra\oM_X:=\shM_X/\shO^\times_X,\qquad e_i\mapsto (\hbox{local equation for }X_i\hbox{ in }\frak{X})
\end{equation}
we will later make use of the identification 
\begin{equation}
\label{decomposeMbar}
\oM_X=\iota_{1,*}\NN\oplus\iota_{2,*}\NN.
\end{equation}

There is a natural surjection onto $\shM_X$ from the following sheaf of monoids
$$\{(n_1,n_2,f)|n_1,n_2\ge 0,f\hbox{ is a local generator for }\shL_1^{\otimes n_1}\otimes\shL_2^{\otimes n_2}\},$$
see for instance Complement 1 in \cite{K.Kato}.

\begin{remark} \label{rem-log-Xi}
Note that $\uX_1$ has two different natural log structures namely $\shM_X|_{X_1}$, the restriction from $X$, and the divisorial log structure from $D$, i.e., $\shM_{(X_1,D)}:=\shO^\times_{X_1\setminus D}\cap\shO_{X_1}$, similarly for $\uX_2$.
For the remainder of the paper, we use $X_i$ to refer to the latter one, i.e., $X_i=(\uX_i,\shM_{(X_1,D)})$.
There is a natural inclusion $\shM_{(X_1,D)}\subset \shM_X|_{X_1}$ compatible with the maps to $\shO_{X_1}$ because 
$\shM_{(X_1,D)}$ is the log structure associated to the submonoid sheaf
$$\{(0,n_2,f)|n_2\ge 0,f\hbox{ is a local generator for } \shL_1^{ \otimes 0}|_{X_1}\otimes \shL_2^{\otimes n_1}|_{X_1}\}$$
since by Lemma~\ref{DF-log}-(1) we have $\shL_2|_{X_1}=\shO_{X_1}(-D)$. Hence, we have a map of log schemes
$$(\uX_1,\shM_X|_{X_1})\ra (\uX_1,\shM_{(X_1,D)})$$
and similarly for $\uX_2$ and this difference is what causes most of the work in later chapters. The induced inclusion $\oM_{(X_1,D)}\subset \oM_X|_{X_1}$ is $\{0\}\oplus\NN_D\subset \NN_{X_1}\oplus\NN_D$ given by the exponents $n_1,n_2$.
\end{remark}

\subsection{Cycle version of the degeneration formula} \label{subsec-cycle-degen-formula}
We fix an effective curve class $\beta\in H_2(\uX)$.
We consider in \S\ref{sec-graphs} certain decorated bipartite graphs $\Gamma$. 
Bipartite means that there is a given map $r:\{\hbox{vertices of }\Gamma\}\ra\{1,2\}$ and the vertices of each edge have different values under $r$.
To each vertex $V$ of $\Gamma$ we associate a moduli stack $\mathscr{M}_V$ that classifies stable log maps to $X_{r(V)}$ governed by data from $\Gamma$ (see Theorem~\ref{mainthm-numeric} and \S\ref{sec-graphs}, \S\ref{section-ob-theories} for more details).
Here, $X_1$ carries the divisorial log structure via the divisor $\uD$, similarly for $X_2$. 
The adjacent edges at $V$ index marked points that map to $\uD$, so there is an evaluation map $\mathscr{M}_V\ra \prod_{e\ni V}\uD$ where the product is over the edges of $\Gamma$ that contain $V$.
Since, by usual conventions, markings ought to be ordered, we also need to keep track of an ordering of the edges of $\Gamma$ and we denote this edge-ordered graph $\tGamma$.
We define $\bigodot _V \mathscr{M}_V $ by the Cartesian square
\begin{equation} \label{gluediag}
\vcenter{ \xymatrix{ 
\bigodot _V \mathscr{M}_V \ar^u[r]\ar[d]& \prod_V \mathscr{M}_V\ar[d]\\
\prod_{e}\uD \ar^(.4)\Delta[r]& \prod_V\prod_{e\ni V} \uD\\
} }
\end{equation}
where the bottom left is the product over all edges of $\tGamma$ and the map $\Delta$ 
is $(d_e)_e\mapsto (d_e)_{V\in e}$, that is
on the factor $\uD$ indexed by an edge $e$ it is the diagonal into the two components indexed by the vertices of $e$ that appear in the bottom right.
This diagram has the effect that the stable maps in the $\mathscr{M}_V$ for various $V$ are glued over their evaluations in $\uD$ as prescribed by $\tGamma$ to form a stable map to $\uX$ that is then an object in $\bigodot _V \mathscr{M}_V $. 
To further garnish this stable map with a compatible log structure to get a stable log map to $X$, a finite choice is to be made. 
In fact, there is an \'etale map ${\phi_\tGamma}:\mathscr{M}_\tGamma\ra \bigodot _V \mathscr{M}_V $ where objects in $\mathscr{M}_\tGamma$ are stable log maps to $X$ whose dual intersection graph collapses to $\tGamma$.
We will show that
\begin{equation} \label{degPhi}
\deg({\phi_\tGamma} )=\frac{\prod_e w_e}{l_\Gamma}
\end{equation}
for the degree of this map (see Lemma~\ref{etale},\eqref{phi} or \eqref{gluing-degree})
where $w_e$ is the contact order to $\uD$ at the relative marking corresponding to $e$ (and this is necessarily the same for $X_1$ and $X_2$) and $l_\Gamma=\lcm(\{w_e\})$. The contact order is defined to be the weight of $e$, see \eqref{node-relation} and the sentence thereafter.
We also have a natural map $F:\mathscr{M}_\tGamma\ra \mathscr{M}$ to the moduli stack $\mathscr{M}:=\mathscr{M}_{g, n} (X/\bk, \beta )$ of stable log maps to $X$ and we show in Lemma~\ref{SplFund2} that the virtual degree of $F$ is $\frac{|E(\Gamma)|!}
{l_\Gamma}$ where $E(\Gamma)$ is the set of edges of $\Gamma$. For every $\tGamma$, we have a commutative diagram
\begin{equation}\label{diag-moduli-spaces}
\vcenter{\xymatrix{ 
\mathscr{M}_\tGamma\ar^F[r]\ar_{\phi_\tGamma}[d]& \mathscr{M} \ar^{\op{ev}}[dr]\\
\bigodot _V \mathscr{M}_V \ar^u[r]& \prod_V \mathscr{M}_V \ar^{\op{ev}}[r] &\uX^n\\
} }
\end{equation}
where $\op{ev}$ denotes respectively the evaluation map for the $n$ marked points.
The following is the main result and will be proved at the end of \S\ref{mainproof}.
\begin{theorem}[Cycle version of the degeneration formula] 
\label{mainthm-cycle} 
We have
$$\llbracket\mathscr{M}\rrbracket = \sum_{\tGamma} \frac{l_\Gamma}{|E(\Gamma)|!} F_*\phi^*\Delta^! \prod_V\llbracket\mathscr{M}_V\rrbracket$$
where $\phi=\phi_\tGamma$ and $\llbracket\mathscr{M}\rrbracket$ is the natural virtual fundamental class for $\mathscr{M}$ and similarly $\prod_V\llbracket\mathscr{M}_V\rrbracket$ is the outer product of the natural virtual fundamental classes for $\mathscr{M}_V$.
\end{theorem}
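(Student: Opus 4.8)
\emph{Strategy.} Write $F=F_\tGamma$ and $\phi=\phi_\tGamma$. By the commutative diagram \eqref{diag-moduli-spaces} the claimed identity only involves $F$, $\phi$ and the diagonal $\Delta$, so the plan is to establish two independent statements and then combine them. The first is a splitting of the virtual class of $\shK$,
\[
\llbracket\shK\rrbracket \;=\; \sum_{\tGamma}\frac{l_\Gamma}{|E(\Gamma)|!}\,F_*\llbracket\shK_\tGamma\rrbracket,
\]
where $\llbracket\shK_\tGamma\rrbracket$ is the natural virtual class carried by $\shK_\tGamma$ in its incarnation as a moduli space of stable log maps to $X$. The second is the identification of this class with the glued product,
\[
\llbracket\shK_\tGamma\rrbracket \;=\; \phi^*\,\Delta^!\,\prod_V\llbracket\shK_V\rrbracket .
\]
Inserting the second identity into the first proves Theorem~\ref{mainthm-cycle}.

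\emph{The splitting of $\llbracket\shK\rrbracket$.} This is where the log splitting stack of \S\ref{sec-splitting-stack} and Lemma~\ref{SplFund2} enter. A stable log map to $X$ has a well-defined decorated bipartite type $\Gamma$ — contract all nodes of the source curve except those whose two branches lie in $X_1$ and $X_2$ — and by \S\ref{section-decomp-curve-moduli} this type, together with its contact orders $w_e$, is already recorded by the log curve alone. For each such $\Gamma$ and each ordering $\tGamma$ of its edges the forgetful map $F$ is proper, and its fibres parametrise the ways to equip a given stable log map with a splitting compatible with $\tGamma$; the resulting virtual degree $|E(\Gamma)|!/l_\Gamma$ combines the $|E(\Gamma)|!$ orderings of the nodes over $\uD$ with the index $l_\Gamma$ of the log structure at those nodes, the same phenomenon responsible for \eqref{degPhi}. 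Lemma~\ref{SplFund2} then packages the compatibility of virtual classes across the resulting stratification of $\shK$ into the displayed weighted sum.

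\emph{Identifying $\llbracket\shK_\tGamma\rrbracket$ — the main obstacle.} Here one compares the obstruction theory of $\shK_\tGamma$ coming from maps to $X$ with the one induced via $\phi^*\Delta^!$ from the product of the obstruction theories of maps to the $(X_i,D)$; the subtlety, stressed in Remark~\ref{rem-log-Xi}, is that the log structure $\shM_X|_{X_i}$ restricted from $X$ differs from the divisorial log structure along $\uD$ used to define the $\shK_V$, via the map $(\uX_i,\shM_X|_{X_i})\to X_i$. Two things must be proved. First, $\phi$ is étale of degree $\prod_e w_e/l_\Gamma$, as in \eqref{degPhi}: this is a local computation at the glued nodes showing that, given matching contact orders $w_e$ on the two sides, the log enhancements of a glued stable map producing a log map to $X$ form a finite set of that size, in the spirit of \cite{GS} and \cite{K.Kato}. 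Second, the two obstruction theories agree after $\phi^*$: on the glued log curve there is a distinguished triangle relating $f^*T^{\log}_{X/\bk}$ to the $f_V^*T^{\log}_{(X_{r(V)},D)}$ plus a term supported at the nodes over $\uD$, and one must verify that this triangle is compatible with the triangles \eqref{comp_dist} and \eqref{two_relatives}, i.e.\ that the associated diagram of triangles commutes. This commutativity is the technical core; I expect it to be the hard part of the proof, with the surrounding deformation-theory bookkeeping through $(\uX_i,\shM_X|_{X_i})\to X_i$ being routine but lengthy.

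\emph{Conclusion.} Granting these two points, $\phi^*$ of the perfect obstruction theory underlying $\Delta^!\prod_V\llbracket\shK_V\rrbracket$ on $\bigodot_V\shK_V$ is the natural obstruction theory on $\shK_\tGamma$, and the standard compatibility of virtual fundamental classes with the Gysin map along the regular embedding $\Delta$ and with the flat (étale) map $\phi$ yields $\llbracket\shK_\tGamma\rrbracket=\phi^*\Delta^!\prod_V\llbracket\shK_V\rrbracket$. Combined with the splitting of $\llbracket\shK\rrbracket$, this proves the theorem.
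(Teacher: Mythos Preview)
Your proposal is correct and follows essentially the same route as the paper: first split $\llbracket\shK\rrbracket$ via Lemma~\ref{SplFund2}, then identify $\llbracket\shK_\tGamma\rrbracket$ with $\phi^*\Delta^!\prod_V\llbracket\shK_V\rrbracket$ by comparing obstruction theories through the commuting triangles \eqref{comp_dist} and \eqref{two_relatives}, using Behrend--Fantechi functoriality at each step. Your identification of the commutativity of the diagram of triangles as the technical core is accurate --- the paper devotes \S\ref{subsec-virt-classes} to constructing the large diagram \eqref{big-exact-diagram} and verifying its commutativity square by square; one minor correction is that the factor $l_\Gamma$ in the splitting step arises not from an ``index of the log structure at the nodes'' but from the multiplicity $\langle\rho,\one\rangle$ of the toric boundary component in the chart for $\Spl\to\cT or_\bk$ (Lemma~\ref{lemma-degree-of-mu}).
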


\subsection{Numerical degeneration formula}
Let us deduce from Theorem~\ref{mainthm-cycle} the numerical version of the degeneration formula. 
Assume we are given an operational Chow class $\gamma\in A^{*}(X^n)$ and an operational Chow class $\psi\in A^{*}(\prod_V \mathscr{M}_V)$ 
whose pullback to $\mathscr{M}_\tGamma$ comes from an operational Chow class $\psi'\in A^{*}(\mathscr{M})$. 
Recall that taking degree is proper push-forward to a point and thus compatible with finite maps. Inserting $\gamma$ and $\psi'$ into
Theorem~\ref{mainthm-cycle} gives
\begin{align}
\label{deg-degen-form}
\begin{split}
\deg\left(\psi'\cap (\gamma\cap\llbracket\mathscr{M}\rrbracket) \right)&=\sum_\tGamma 
\frac{l_\Gamma}{|E(\Gamma)|!} \deg\left(\psi'\cap \big(\gamma\cap F_*\phi^*\Delta^! \prod_V\llbracket\mathscr{M}_V\rrbracket\big) \right)\\
&=\sum_\tGamma 
\frac{l_\Gamma}{|E(\Gamma)|!} \deg\left(\psi\cap \big(\gamma\cap \phi^*\Delta^! \prod_V\llbracket\mathscr{M}_V\rrbracket\big) \right)\\
&\overset{\eqref{degPhi}}{=}\sum_\tGamma  \frac{\prod_e w_e}{|E(\Gamma)|!}
\deg\left(\psi\cap \big(\gamma\cap \Delta^!\prod_V\llbracket\mathscr{M}_V\rrbracket \big)\right).
\end{split}
\end{align}
Here the last equality uses that $\phi_*\phi^*$ is multiplication by $\deg({\phi} )$.

 The expressions in \eqref{deg-degen-form} may be reinterpreted in Borel--Moore homology instead. In this case, read $\ev^*(\gamma)\cap$ for each occurrence of $\gamma \cap$ above, $\gamma,\psi$ are cohomology classes now and we apply the cycle map $A_*\ra H_{2*}$ to all occurrences of $\llbracket\mathscr{M}_V\rrbracket$ above. Then \eqref{deg-degen-form} holds with these reinterpretations.
The advantage of the latter interpretation is that we can impose incidence at an arbitrary cocycle $\gamma\in H^*(X^n)$ at the cost of signs in the following.

Let $\{ \delta _j^{1}\}_{j}$ be a homogeneous basis of $H^*(\uD, \QQ)$ 
and let $\{ \delta _j ^{2}\}_{j}$ be the dual basis 
in the sense that 
\[ \int _{\uD} \delta _i ^{2} \delta _j ^{1}  =  \left\{ \begin{array}{rl}  0 & \text{ if } i\ne j \\
             1 & \text{ if } i=j , \end{array} \right.  \]
where 2 is purposefully before 1 to have no signs in the representation of the diagonal. 
Define the sign $(-1)^{\eps}$ by the equality
$$\prod^n_{i=1} \gamma_i \prod_{e} \delta^1_{e,j_e}\otimes\delta^2_{e,j_e} = (-1)^{\eps} \prod_V\left(\prod_{i\in n_V}\gamma_i\right)\left(\prod_{e\ni V} \delta^{r(V)}_{e,j_e}\right). $$
Then, we conclude from Theorem~\ref{mainthm-cycle} the following result.
\begin{theorem}[Numerical version of the degeneration formula] \label{mainthm-numeric} 
For $\gamma _i \in H^*(\uX, \QQ)$ and non-negative integers $m_i$, in Witten's correlator-notation  where $\tau_m(\gamma)$ means $\psi^m\ev^*(\gamma)$, we have
\begin{align*}       
\left\langle \prod_{i=1}^n\tau_{m_i}(\gamma_i) \right\rangle_{g,\beta}^X 
     &= \sum _{\tGamma}  \sum _{(j_e)_e } 
      \frac{\Pi _{e}  w_e }{|E(\tGamma )|!}(-1)^\eps      \prod _{V\in V(\Gamma)}  
    \left\langle \left.\prod_{i\in n_V}\tau_{m_i}(\iota^*_{r(V)}\gamma_i)\right|
\prod_{e\in V} \tau_0(\delta^{r(V)}_{j_e})   \right\rangle_{g_V,\beta_V}^{X_{r(V)}} 
\end{align*}
where $\eps$ is determined as before, the first sum runs over all $\tGamma\in\tOmega (g, n, \beta)$ as introduced in \S\ref{sec-graphs} (see also for $n_V,\beta_V$) and
the second sum runs over all tuples in
$\{1, ..., \mathrm{rk}H^*(\uD) \}^{E(\tGamma )}$.
The moduli stack underlying the left hand side is $\mathscr{M}_{g, n} (X/\bk, \beta )$ and that for the right hand side is 
$\mathscr{M}_V := \mathscr{M}_{g_V, n_V\cup E_V} (X_{r(V)}/\ubk, \beta_V )$
where $E_V$ refers to the ordered set of edges in $\tGamma$ adjacent to $V$. The positive contact orders $w_e$ to $D$ for $e\in E_V$ are 
part of the data $\beta_V$.
If $\Gamma$ has only one vertex, then we set $\Pi _e w_e / | E(\tGamma ) |! = 1$. The sum is finite (see \S\ref{sec-graphs}).
\end{theorem}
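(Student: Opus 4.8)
The plan is to obtain Theorem~\ref{mainthm-numeric} as a purely formal consequence of the cycle version Theorem~\ref{mainthm-cycle}, by capping both sides with a product of $\psi$-classes and evaluation pullbacks and taking degrees. First I would fix $\gamma=\gamma_1\otimes\cdots\otimes\gamma_n\in H^*(\uX^n,\QQ)$ and $\psi=\prod_i\psi_i^{m_i}$ on $\shK$; since the universal curve and its marked sections pull back along the diagram \eqref{diag-moduli-spaces}, $\psi$ has compatible incarnations $\psi'$ on $\shK_\tGamma$ and $\psi_V$ on the $\shK_V$, so the bivariant hypotheses stated just before the theorem are met. Capping $\llbracket\shK\rrbracket$ with $\psi'\cup\ev^*\gamma$ and applying $\deg$ (proper push-forward to a point), I would insert Theorem~\ref{mainthm-cycle} and then simplify using the commutativity of \eqref{diag-moduli-spaces}, the projection formula for the finite maps $F$ and $\phi_\tGamma$, the identity $\phi_*\phi^*=\deg(\phi_\tGamma)\cdot(-)$, and the value $\deg(\phi_\tGamma)=\prod_e w_e/l_\Gamma$ from \eqref{degPhi}. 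This is exactly the chain \eqref{deg-degen-form}: the combinatorial prefactor $l_\Gamma/|E(\Gamma)|!$ of the cycle formula combines with $\deg(\phi_\tGamma)$ to give $\prod_e w_e/|E(\Gamma)|!$, leaving $\sum_\tGamma\frac{\prod_e w_e}{|E(\Gamma)|!}\deg\big(\psi\cap\gamma\cap\Delta^!\prod_V\llbracket\shK_V\rrbracket\big)$.

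Next I would pass to the Borel--Moore/singular cohomology picture, replacing each $\gamma\cap$ by $\ev^*\gamma\cup$ and $\Delta^!$ by cup product with the class of the diagonal; by bivariant intersection theory $\Delta_*\Delta^!$ is intersection with $[\Delta]$, and using the product description \eqref{gluediag} the relevant diagonal class is $\delta=\prod_e\sum_j\delta^1_{e,j}\otimes\delta^2_{e,j}\in H^*(\prod_V\prod_{e\ni V}\uD)$. Here I would invoke the standard formula for the Poincar\'e dual of a diagonal (e.g.\ \cite[Thm.~11.11]{MS}) together with the chosen ordering --- the dual basis indexed ``$2$ before $1$'' --- precisely so that this representative carries no sign. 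At this stage the right-hand side reads $\sum_\tGamma\frac{\prod_e w_e}{|E(\Gamma)|!}\int_{\prod_V\llbracket\shK_V\rrbracket}\psi\cup\ev^*\gamma\cup\prod_e\ev_e^*\big(\sum_j\delta^1_{e,j}\otimes\delta^2_{e,j}\big)$.

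Finally I would use that $\prod_V\llbracket\shK_V\rrbracket$ is an outer product over the vertices and that every class in sight ($\gamma$, $\psi$, and $\delta$) is an exterior product of classes attached to individual marked points or edges; rearranging these factors to group them by vertex introduces a single Koszul sign $(-1)^\eps$, defined exactly by the displayed identity $\prod_i\gamma_i\prod_e\delta^1_{e,j_e}\otimes\delta^2_{e,j_e}=(-1)^\eps\prod_V(\prod_{i\in n_V}\gamma_i)(\prod_{e\ni V}\delta^{r(V)}_{e,j_e})$. Summing over the tuples $(j_e)_e\in\{1,\dots,\rk H^*(\uD)\}^{E(\tGamma)}$ then factors the integral over $\prod_V\shK_V$ into a product of integrals over the individual $\shK_V=\shK_{g_V,n_V\cup E_V}(X_{r(V)}/\ubk,\beta_V)$, and each such integral is by definition the correlator $\langle\prod_{i\in n_V}\tau_{m_i}(\iota_{r(V)}^*\gamma_i)\mid\prod_{e\in V}\tau_0(\delta^{r(V)}_{j_e})\rangle^{X_{r(V)}}_{g_V,\beta_V}$. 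Reading the resulting identity back in Witten bracket notation gives the claimed formula; the degenerate convention $\Pi_e w_e/|E(\tGamma)|!=1$ for a one-vertex graph is forced by $l_\Gamma=1$ and $|E(\Gamma)|=0$, and finiteness of the sum over $\tGamma\in\tOmega(g,n,\beta)$ is the content of \S\ref{sec-graphs}.

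I expect the only genuinely delicate point to be the sign bookkeeping: one must verify that no signs sneak in from $\Delta^!$ (this is exactly why the dual basis is ordered with $\delta^2$ before $\delta^1$, so that the diagonal has a sign-free representative), that the only residual sign is the single Koszul reordering sign $(-1)^\eps$ above, and that the $\psi$-classes really do pull back compatibly through \eqref{diag-moduli-spaces} so that the same symbol may be used uniformly on $\shK$, $\shK_\tGamma$ and the $\shK_V$. Everything else is the formal transport of Theorem~\ref{mainthm-cycle} through proper push-forward and the projection formula.
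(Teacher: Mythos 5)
Your proposal is correct and follows essentially the same route as the paper: cap the cycle identity of Theorem~\ref{mainthm-cycle} with $\psi$-classes and $\ev^*\gamma$, take degrees, use the projection formula and $\deg\phi_\tGamma=\prod_e w_e/l_\Gamma$ to produce the chain \eqref{deg-degen-form}, then pass to cohomology, expand $\Delta^!$ via the (sign-free, ``$2$ before $1$'') representative of the diagonal class, and reorganize the integrand by vertex at the cost of the Koszul sign $(-1)^\eps$. The bookkeeping points you flag (compatibility of $\psi$-classes under \eqref{diag-moduli-spaces}, the sign-free diagonal convention, the single residual sign $\eps$) are exactly the ones the paper attends to.
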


The formula is a straightforward version of the degeneration formula of \cite{Li, AF, ChE}.

\begin{remark} 
If $X=\pi^{-1}(b)$ is the central fibre of a semi-stable degeneration $\frak{X}\ra B$ as in \S \ref{sec-setup-results},  
we fix a $\hat\beta$ in $H_2(\frak{X})$ and then for $b'\in B$ and ${X_{b'}}=\pi^{-1}(b')$, we have an identity
$$\sum_\beta\left\langle \prod_{i=1}^n\tau_{m_i}(\gamma_i) \right\rangle_{g,\beta}^X=\sum_{\beta'}\left\langle \prod_{i=1}^n\tau_{m_i}(\gamma_i') \right\rangle_{g,\beta'}^{X_{b'}} $$
provided that: 
\begin{enumerate}
\item We take the sum respectively over all $\beta\in H_2({X_b})$  and $\beta'\in H_2({X_{b'}})$ which map to $\hat\beta$.
\item The classes $\gamma_i'\in H^*(X_{b'})$ and
$\gamma_i\in H^*(X_{b})$ are pullbacks from the same element in $H^*(\frak{X})$ for each $i$.
\end{enumerate}
This statement follows from \cite[Proposition 5.10]{BF} as explained in \cite[Theorem~A.3]{MR}. 
\end{remark}


\section{Graphs}
\label{sec-graphs}

Consider a bipartite graph $\Gamma$, i.e.,~we have a map $r:\{\hbox{vertices of }\Gamma\}\ra\{1,2\}$ and the vertices of each edge have different values under $r$.
Each vertex $V$ is decorated with a tuple $(g_V,\beta_V,n_V)$ with $g_V\ge 0$ called the \emph{genus}, $n_V\subset\{1,...,n\}$ and $\beta_V$ an effective curve class in $X_{r(V)}$.
Each edge $e$ is decorated with a positive integer $w_e$, called \emph{the weight}.
 Furthermore, we require $\Gamma$ to satisfy the following properties.

\begin{align}\label{class_sum} 
\hphantom{(curve class)} \qquad\qquad  \sum _{V:r(V)=1} \iota _{{1}, *} \beta _V  + \sum _{V:r(V)=2} \iota _{{2}, *} \beta _V   = \beta \qquad\qquad \hspace{-.1cm} \emph{(curve class)} 
\end{align} 
\begin{align}\label{rel_class_sum} 
 \hphantom{(contact order)}\qquad\qquad\qquad\qquad \beta _V \cdot D = \sum _{e\ni V} w_e, \qquad\qquad\qquad\qquad \hspace{-.1cm} \emph{(contact order)} \end{align}
\begin{align}\label{stability} 
\hphantom{stability}\qquad\qquad \beta _V \ne 0  \ \ \text{  if  } \ \ 2g_V + |n_V| + \mathrm{val}(V) < 3, \qquad\qquad \hspace{.3cm} \emph{(stability)} \end{align}
 \begin{align}\label{genus}    
 \hphantom{(genus)}\qquad\qquad\qquad\qquad  1- \chi_{\mathrm{top}} (\Gamma) +  \sum _{V} g_V   = g, \qquad\qquad\qquad\qquad  \hspace{-.68cm}\emph{(genus)} \end{align}
 \begin{align}\label{partition_n}  
 \hphantom{(markings)}\qquad\qquad\qquad\qquad\coprod _V n_V = \{ 1, ..., n\}  \qquad\qquad\qquad\qquad \hspace{-.2cm} \emph{(markings)} \end{align}
We call $\Gamma$ of type $(g, n, \beta)$ if it satisfies these conditions and denote the set of all such $\Gamma$ up to isomorphism by $\Omega(g, n, \beta)$. 
The set $\Omega (g, n, \beta)$ is a finite set.
Indeed, $\eqref{stability}$, $\eqref{genus}$ and $\eqref{partition_n}$ imply that the set of marking and genus decorated graphs is finite and then since the trivial curve class is indecomposable in the cone of effective curve classes, the finiteness of $\Omega(g, n, \beta)$ follows.

We denote by $\tGamma$ a decorated graph $\Gamma$ as above that is additionally equipped with edge markings, i.e., with a bijection $E(\Gamma)\cong\{ e_1, ..., e_{|E(\Gamma )|} \}$ and here the $e_i$ are formal symbols. 
Let $\tilde\Omega(g, n, \beta)$ denote the set of all such $\tilde\Gamma$ up to isomorphism. 
Let $\Aut(\Gamma)$ denote the (finite) group of automorphisms of $\Gamma$ that are compatible with the decorations.
Note that
\begin{equation} \label{Gamma-tildeGamma}
\sum_{\Gamma\in \Omega(g, n, \beta) }\frac1{|\Aut(\Gamma)|} = \frac{|\tilde \Omega(g, n, \beta)|}{|E(\Gamma )|!}.
\end{equation}
Given $\tGamma$ as above, we denote by $\Gamma_i$ the subgraph with the vertex set $\{V: r(V)=i\}$
and we keep the adjacent edges as half-edges, 
Each adjacent edge is considered to have only one vertex, topologically $[0,\infty)$.
We carry over the decorations to the vertices and half-edges: $\beta_V,g_V,n_V,w_e$ and the ordering of the half-edges.
We then denote by $\Gamma_V$ the connected component of $\Gamma_1$ or $\Gamma_2$ containing the vertex $V$.


\section{Stable log maps}
We refer to \cite{K.Kato} for the basics on log geometry and to \cite{GS,AC,Ch} for the basics of stable log maps that we recall here now. Note that \emph{smooth} means \emph{log smooth} in the context of log schemes. 
Let $Y,W$ be log schemes with log structures coming from the Zariski site and let $Y\ra W$ be a smooth and projective morphism. 
We are going to apply this to $X\ra\bk$ and $X_i\ra\ubk$ later on; see the beginning of \S\ref{subsec-setup} for the notations $\bk$, $\ubk$.
We recall Definitions\,1.3 and 1.6 from \cite{GS}.
\begin{definition} \label{def-stablemap}
A \emph{prestable log map} is a commutative diagram of log morphisms
\begin{equation}\label{square} \begin{CD} C @>f>> Y \\
        @V{\pi}VV @VVV \\
        S @>{f_S}>> W \end{CD}
\end{equation}
such that $\pi$ is smooth and integral and the fibres of $\underline{\pi}:\underline C\ra\uS$ are reduced and connected curves.
There are sections $x_1,...,x_n:\uS\ra \underline C$ for the marked points with mutually disjoint images and these images are precisely the locus in the complement of nodes of fibres where  $\pi$ is not strict. By Theorem 1.3 in \cite{F.Kato}, away from the nodes, $\oM_C=\pi^*\oM_S\oplus\bigoplus_i x_{i,*}\NN$.
A prestable log map is \emph{stable} if the diagram of underlying schemes constitutes a stable map.
\end{definition}
Consider a stable log map with $S$ a point, $Q:=\oM_S$ and $e\in C$ a node, then $\oM_{C,e}=Q\oplus_\NN\NN^2$ for some $\NN\ra Q,1\mapsto q_e\neq 0$.
Let $\eta_1,\eta_2$ be the generic points of the components adjacent to $e$ in $C$, the map $f$ together with the generizations $e\to \overline{\eta_i}$ induce a commutative diagram (see Discussion 1.8, p.459 in \cite{GS})
\begin{equation} \label{butterfly}
\begin{split}
\xymatrix@C=20pt
{&P_{\eta_1}\ar^{f_{\eta_1}}[rr]&&Q\\
P_e\ar^{f_e}[rr]\ar^{\chi_1}[ru]\ar_{\chi_2}[rd]
&&Q\oplus_\NN\NN^2 \ar[ru]\ar[rd]\ar@{^{(}->}[r]
&\ar[u]_{\pr_1} \ar[d]^{\pr_2} Q\times Q\\
&P_{\eta_2}\ar_{f_{\eta_2}}[rr]&&Q
}
\end{split}
\end{equation}
where $P_e:=\oM_{Y,f(e)}$, $P_{\eta_i}:=\oM_{Y,f(\eta_i)}$ and the horizontal maps are induced  by $f$.
The diagram defines a map $u_e:P_e\ra\ZZ$ by the property 
\begin{equation}
\label{node-relation}
f_{\eta_2}\circ{\chi_2}-f_{\eta_1}\circ{\chi_1}=u_e\cdot q_e.
\end{equation}
If $u_e$ is non-zero, there is a unique primitive $\tilde u_e\in \Hom(P_e^\gp,\ZZ)$ and $w_e>0$ such that $u_e=w_e \tilde u_e$. 
We call $w_e$ \emph{the weight} of $e$. If $u_e=0$, set $w_e=0$.
For a monoid $P$, define $P^\vee=\Hom(P,\NN)$. Consider the monoid
\begin{equation}
\label{eq-basic}
Q^\vee_{\op{basic}} := \left\{ \left. ((V_\eta)_\eta,(l_e)_e)\in\bigoplus_\eta P^\vee_\eta\oplus\bigoplus_e\NN\,\right|\, V_{\eta_2}\circ\chi_2 -V_{\eta_1}\circ\chi_1=l_eu_e \hbox{ for all }e\right\}
\end{equation}
where the first sum runs over the generic points $\eta$ of irreducible components of $C$ and the second sum runs over the nodes $e$.
\begin{definition}
\label{def-basic} 
If $S$ is a point and $Q=\oM_S$ as before, for $\eta$ the generic point of a component of $C$, let $f^\vee_\eta:Q^\vee\ra P_\eta^\vee$ denote the dual of $f_\eta$.
For $e$ a node of $C$, let $q_e^\vee:Q^\vee\ra\NN$ be the evaluation of an element of $Q^\vee$ on $q_e$.
The tuple $((f^\vee_\eta)_\eta,(q_e^\vee)_e)$ gives a well-defined \emph{structure map} 
$$Q^\vee\ra Q^\vee_{\op{basic}}$$ 
because the image $\big((f^\vee_\eta(q))_\eta,(q_e^\vee(q))_e\big)=:((V_\eta)_\eta,(l_e)_e)$ of every $q\in Q^\vee$ satisfies the relation 
$V_{\eta_2}\circ\chi_2 -V_{\eta_1}\circ\chi_1=l_eu_e$ for each $e$ in the definition of $Q^\vee_{\op{basic}}$ due to \eqref{node-relation}.

We call the stable log map $f:C/S\ra Y/W$ \emph{basic} if the structure map $Q^\vee\ra Q^\vee_{\op{basic}}$ is an isomorphism. 
A stable log map with more general base $S$ is basic if its restriction to all points in $S$ is basic.
\end{definition}

If $f$ is basic and $\rho\in Q^\vee_{\op{basic}}$ an element, say $\rho=((V_\eta)_\eta,(l_e)_e)$, then Definition~\ref{def-basic} implies
$$ \rho(q_e)=l_e.$$

If $x_i:S\ra C$ is one of the sections of a stable log map $f:C/S\ra Y/W$ with $S$ a point, then we denote $P_{x_i}:=\oM_{Y,f(x_i(S))}$ and $f$ induces a map
$$u_i\colon P_{x_i}\ra \oM_{C,x_i(S)}=Q\oplus \NN \ra \NN$$
where the second map is the projection to the second summand. The map constitutes an element $u_i\in P_{x_i}^\vee$ which we call the \emph{contact order} of $f$ at $x_1$.
\begin{definition} 
\label{def-beta}
A \emph{class} $\beta$ of stable log map to $Y/W$ consists of 
\begin{itemize}
\item an element of $H_2(Y)$ that we also call $\beta$, 
\item a genus $g\ge 0$, 
\item a number of markings $n\ge 0$ and
\item for $1\le i\le n$, a strict closed embeddings $Z_i\subset Y$ and section $s_i\in\Gamma(Z_i,\shHom(\oM_{Z_i}^\gp,\ZZ))$ that does not extend to any closed subset of $Y$ that is strictly larger than $Z_i$.  
\end{itemize}
We say that a stable log map $f$ is of class $\beta$ if the underlying stable map is of genus $g$, of class $\beta$ with $n$ markings and if the contact order $u_i$ at $x_i$ agrees with $s_i$ over every point in $S$.
\end{definition}
We denote the moduli stack of basic stable log maps of class $\beta$ by $\mathscr{M}_{g,n}(Y/W,\beta)$.
This stack is the source of a forgetful functor to the another stack $\cL og_{\mathscr{M}_{g,n}}$ that we recall in \S\ref{sec-splitting-stack}.
Moreover, we have a commutative square
\[ 
\xymatrix{
\mathscr{V}\ar_{\pi}[d]\ar^f[r] & Y\ar[d]\\
\mathscr{M}_{g,n}(Y/W,\beta) \ar[r]&W
} 
\]
where the left vertical arrow denotes the universal family and the top horizontal arrow is the universal map.
Let $\shT_{Y/W}$ denote the relative tangent sheaf of the log smooth map $Y\ra W$.
Similar to the construction given in \S\ref{subsec-virt-fund-cl} below, we obtain a perfect obstruction theory
$(R\pi _* f^*\shT_{Y/W})^\vee\ra\LL_{\mathscr{M}_{g,n}(Y/W,\beta)/\cL og_{\mathscr{M}_{g,n}}},$
see also \cite{GS},\,\S5.

The reader may find the general definition of \emph{combinatorial finiteness} for a class $\beta$ in \cite{GS},\,Definition~3.3. 
This condition holds in the situations of interest to us because the set $\Omega(g, n, \beta)$ that we introduced in \S\ref{sec-graphs} is finite. 

The main result of \cite{GS,AC,Ch} is then as follows.
\begin{theorem} 
If $\beta$ is combinatorially finite then $\mathscr{M}_{g,n}(Y/W,\beta)$ is a proper Deligne-Mumford stack of finite type over $W$ with natural virtual fundamental class $\llbracket\mathscr{M}_{g,n}(Y/W,\beta)\rrbracket$.
\end{theorem}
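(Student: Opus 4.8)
The plan is to run the standard three-step program --- algebraicity, properness, obstruction theory --- exploiting that for a \emph{basic} stable log map the base log structure is canonical, so that the moduli problem forms a category fibered in groupoids over ordinary $\uW$-schemes: an $S$-point amounts to an $n$-pointed genus-$g$ prestable map $\uf\colon\underline{C}\ra\uY$ of class $\beta$ over $\underline S$, a structure of log smooth curve on $\underline{C}\ra\underline S$, a morphism $\underline S\ra\underline W$, and a log lift $f^\flat\colon f^*\shM_Y\ra\shM_C$ of $\uf$ over the induced minimal base log structure $\shM_S$, all subject to the basicness constraint of Definition~\ref{def-basic} (read off the butterfly diagrams \eqref{butterfly} at the nodes).

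First I would prove algebraicity and finite type by stratifying by the combinatorial \emph{type} $\tau$: a dual graph decorated by genera and curve classes in $\uY$ at its vertices and weights $w_e$ at its edges, subject to the evident numerical constraints. Combinatorial finiteness of $\beta$ ensures that only finitely many $\tau$ arise, and the locus $\shM^\tau$ of stable log maps of type $\tau$ is locally closed. On $\shM^\tau$ the minimal base monoid $Q=Q_\tau$ --- dual to the monoid $Q^\vee_{\op{basic}}$ of \eqref{eq-basic} read off $\tau$ --- is constant, so the minimal base log structure is the pullback of the tautological one on $\cA_{Q_\tau}:=[\Spec\kk[Q_\tau]/\Spec\kk[Q_\tau^\gp]]$ along a morphism $\underline S\ra\cA_{Q_\tau}$. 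Hence $\shM^\tau$ is cut out inside a fibre product of (a) the algebraic stack of $n$-pointed genus-$g$ prestable maps to $\uY$, (b) the algebraic stack of prestable log curves over it (F.\ Kato, Olsson), (c) $\cA_{Q_\tau}$, and (d) the stack of log lifts $f^\flat$, which is representable and of finite type over (a)--(c) by Olsson's representability theorems for log structures. \'Etale descent of log structures shows the result is a stack; it is Deligne--Mumford since the underlying stable map has finite automorphisms by ordinary stability and any automorphism fixing it acts trivially on a basic log structure; and it is of finite type over $W$ because each of (a)--(d) is and the $\tau$ are finitely many.

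Next I would prove properness over $W$ via the valuative criterion over a complete DVR $R$ with fraction field $K$ and algebraically closed residue field. Given a basic stable log map over $\Spec K$, a finite extension of $K$ lets me extend the \emph{underlying} stable map over $\Spec R$ by properness of the Kontsevich stack $\overline{\rM}_{g,n}(\uY,\beta)$, possibly after a semistable modification of the total space of the curve so that the central fibre acquires the correct dual graph. The remaining step --- genuinely log-geometric, and the hard part --- is to upgrade this $R$-family, equipped with the divisorial log structure along its central fibre, to a stable log map restricting to the given one over $K$, and to prove that its \emph{minimal} log structure is unique. I would carry this out through the tropical/combinatorial analysis underlying \cite{GS} (cf.\ \cite{NS}): such a limiting log structure near the central point corresponds to a balanced integral piecewise-linear map from the dual graph into the tropicalization of $Y\ra W$ whose behaviour over $K$ is prescribed by the given log map, and combinatorial finiteness bounds the relevant polyhedral data, so that existence and uniqueness become a finite combinatorial check. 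Together with finite type this gives properness, and the uniqueness half gives separatedness.

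Finally I would construct the virtual class along the forgetful morphism $\mathfrak{g}\colon\shM(Y/W,\beta)\ra\fM$ to the stack $\fM$ of prestable log curves, which is log smooth over $W$ with smooth underlying stack, with universal log curve $\pi\colon\cC\ra\shM(Y/W,\beta)$ and universal map $f\colon\cC\ra Y$. Since $Y\ra W$ is log smooth, the deformation theory of a stable log map relative to deformations of the log curve is governed by $R\pi_*f^*\cT_{Y/W}$, a perfect complex of amplitude $[0,1]$, giving a perfect relative obstruction theory $E^\bullet\ra\LL_{\mathfrak{g}}$; the Behrend--Fantechi construction \cite{BF}, using the smoothness of $\fM$ over $W$, then yields $\llbracket\shM(Y/W,\beta)\rrbracket$. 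The only genuinely hard step is the existence and uniqueness of the limiting log structure in the valuative criterion; the rest is formal from the algebraicity of Olsson's $\Log$-type stacks or reduces to the corresponding classical facts for ordinary stable maps.
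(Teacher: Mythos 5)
The paper does not prove this theorem: it is stated and cited verbatim as ``the main result of \cite{GS,AC,Ch}'' and no argument is given in the text. There is therefore no in-paper proof to compare yours against.

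Taken on its own, your sketch faithfully reproduces the architecture of the argument actually carried out in \cite{GS} (and, in the Deligne--Faltings setting, \cite{AC,Ch}): boundedness via stratification by combinatorial type together with the combinatorial-finiteness hypothesis; algebraicity and the Deligne--Mumford property via Olsson's $\cL og$-stack machinery, representing the basic base log structure by a map to $\cA_{Q_\tau}$ on each stratum; properness via the valuative criterion, first extending the underlying stable map and then producing and uniquely characterizing the limiting log structure through the tropicalization of the dual graph into $\Trop(Y/W)$; and the virtual class from the perfect relative obstruction theory $(R\pi_*f^*\cT_{Y/W})^\vee$ over the log-smooth stack of prestable log curves. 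Your identification of the properness step as the genuinely log-geometric hard part is correct; be aware, though, that the ``finite combinatorial check'' you invoke there compresses most of the substance of \cite[\S3--4]{GS} (stable reduction of log structures, construction of the basic monoid over the DVR, and the associated tropical balancing). As a summary of the cited proof it is accurate; as a freestanding proof it would need those details filled in.
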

We will consider $\mathscr{M}:=\mathscr{M}_{g,n}(X/\bk,\beta)$ as well as $\mathscr{M}_V:=\mathscr{M}_{g_V,n_V\cup E_V}(X_i/\ubk,\beta_V)$ for certain $\beta_V$ in \S\ref{section-ob-theories}.


\section{From curves to graphs and tropical curves}
\label{section-graphs-to-curves} 
Transferring notation from the previous section, we now set $Y:=X$, $W:=\bk$. 
By Definition~\ref{def-stablemap}, the characteristic of the log structure at every point $\bar s\in \mathscr{M}(X/\bk,\beta)$ is given by the dual of \eqref{eq-basic}, that is, $\oM_{\bar s}=Q_{\op{basic}}:=(Q^\vee_{\op{basic}})^\vee$. 
By the description of $\oM_X$ in \eqref{decomposeMbar}, we have $P_\eta\cong \NN^2$ if and only if $\eta$ maps to $D$ and $P_\eta\cong \NN$ otherwise. A similar statement holds for $P_e$.
By definition, $Q^\vee_{\op{basic}}$ is a saturated submonoid of $(\bigoplus_\eta P_\eta^\vee)\oplus(\bigoplus_e\NN)$ and the subgroup of invertible elements of the latter is trivial. Applying $\Hom(\cdot,\NN)$ to this inclusion, we obtain a map 
\begin{equation}
\label{basic-generation-surjection}
\left(\bigoplus_\eta P_\eta\right)\oplus\left(\bigoplus_e\NN\right) \ra Q_{\op{basic}}
\end{equation}
that is surjective up to saturation, i.e., for every $q\in Q_{\op{basic}}$ there is a $k>0$ such that $kq$ is in the image.
The generator of the $\NN$-summand for $e$ maps as $1\mapsto q_e$ and we denote the restriction of \eqref{basic-generation-surjection} to the $P_\eta$-summand by $V_\eta:P_\eta\ra Q_{\op{basic}}$. This notation is compatible with the notation in \eqref{eq-basic} because, for an element $\rho\in Q_{\op{basic}}^\vee$, composing $V_\eta$ with evaluation on $\rho$ yields the component that is called $V_\eta$ in \eqref{eq-basic}, see \eqref{tropical-sequence}.

Let $\one$ denote the generator of $\NN$ in the log structure of the standard log point $\bk$. 
The section $\one$ maps to every stalk in all the log structures of the schemes in \eqref{square} and we call them $\one$ also in these other places. Note that $\one\neq 0$ in all places by the locality of monoid maps induced from log morphisms.

For $\eta$ a generic point of a component of $C$, in light of \eqref{global-generation} and \eqref{decomposeMbar}, consider the composition
\begin{equation}
\label{tropical-sequence-0}
\NN^2=\Gamma(X,\oM_X)\sra P_\eta \stackrel{V_\eta}{\lra} Q_{\op{basic}},\qquad e_i \mapsto V_\eta(e_i).
\end{equation}
Note that $\one=(1,1)$ on the left maps to the element $\one$ on the right independent of $\eta$ because this only depends on the bottom horizontal map in \eqref{square} which on log charts is given by $\NN\ra  Q_{\op{basic}}, 1\mapsto\one$. 
Therefore, $\one=V_\eta(1,1)$ for all $\eta$ and hence by \eqref{node-relation}
\begin{equation}
\label{ue-one-zero}
u_e(\one)=0
\end{equation}
for all nodes $e$ and thus $\tilde u_e=(1,-1)$ or $\tilde u_e=(-1,1)$ whenever it is non-zero. 
Here we implicitly represent $\tilde u_e:P_e\ra \ZZ$ via the composition with $\NN^2\sra P_e$.

\begin{lemma} 
\label{lem-identity-one}
For every edge $e\in E(\Gamma_C)$ there is an labelling $\eta_1,\eta_2$ of the generic points of adjacent curve components so that we have an identity of elements in $Q_{\op{basic}}$ of the form $$ V_{\eta_1}(e_1) + w_e q_e + V_{\eta_2}(e_2) =\one. $$
\end{lemma}
\begin{proof} 
The statement follows from combining \eqref{node-relation} with the identity $V_{\eta_2}(e_1)+V_{\eta_2}(e_2)=\one$.
\end{proof}

Next, assume we are given an element $\rho=((V^\rho_\eta)_\eta,(l_e)_e)\in Q^\vee_{\op{basic}}$. Consider the composition
\begin{equation}
\label{tropical-sequence}
\NN^2=\Gamma(X,\oM_X)\sra P_\eta \stackrel{V_\eta}{\lra} Q_{\op{basic}}\stackrel{\rho}{\lra} \NN,\qquad e_i \mapsto \rho(V_\eta(e_i))=V^\rho_\eta(e_i)
\end{equation}
We set $l:=\rho(\one)=\rho(V_\eta(1,1))$ which is independent of $\eta$ by the commutativity of \eqref{square}.
Applying $\Hom(\cdot,\RR_{\ge0})$ to the sequence \eqref{tropical-sequence} yields a map $V_\eta^\vee:\RR_{\ge 0}\ra \RR^2_{\ge 0}$ for each $V_\eta$.
The set of points $\{ V_\eta^\vee(1)\}_\eta$ is contained in the segment $\{(l-\alpha,\alpha)|\alpha\in[0,l]\}$ that we identify with $[0,l]$.
We refer to the images of $1$ under $V_\eta^\vee$ as \emph{vertices}
\begin{equation}
\label{vertices-map}
 V_\eta^\vee(1) \in  [0,l] = \{\phi(e_2)\,|\, \phi\in \Hom(\RR^2_{\ge 0},\RR_{\ge 0}), \phi(1,1)=l\}.
\end{equation}
We have just defined a map from the dual intersection graph $\Gamma_C$ of $C$ to $[0,l]$ by mapping the vertex indexed by $\eta$ to $V_\eta^\vee(1)$ and by requiring the map to be linear on edges. (Each edge corresponds to a node $e$ of $C$.) We decree the length of the edge $e$ to be $l_e$. 
By \eqref{eq-basic},
\begin{equation}
\label{vertex-difference}
V_{\eta_2}^\vee(1)-V_{\eta_1}^\vee(1) = l_e w_e
\end{equation}
whenever $e$ is a node between the curve components ${\eta_1}$ and ${\eta_2}$ and the ordering $\eta_1,\eta_2$ is compatible with the orientation of $u_e$ in the sense of \eqref{butterfly}. Consequently, $w_e$ is the scaling factor of the linear map $e \ra [0,l]$ and we take it to be $0$ if $u_e=0$.
The so defined map $h:\Gamma_C\ra [0,l]$ of the metric graph $\Gamma_C$ is a \emph{tropical curve} for which we give a definition below. 
The first relevant property is that, by \eqref{vertices-map} and \eqref{vertex-difference}, $h$ satisfies the \emph{balancing condition} (see \cite[Proposition 1.15]{GS}) which is an equality
\begin{equation}
\label{balancing-cond}
 \sum_{V\in e} \pm u_e= 0,
\end{equation}
for each vertex $V=V_\eta$ of $\Gamma_C$ that corresponds to a component $\overline{\eta}$ of $C_{\bar s}$ that is contracted by $f$. 
The sum is over all nodes $e$ in $\overline{\eta}$, the sign $\pm$ is such that $\pm u_e$ points away from $V_\eta^\vee(1)$.

For an integral monoid $M$, we denote by $M\otimes \RR_{\ge 0}$ the convex hull of $M$ in $M^\gp\otimes_\ZZ\RR$.
Note that \eqref{vertex-difference} induces a partial ordering on the vertices of $\Gamma_C$, i.e., $V_1\le V_2$ if there is an edge $e$ between them and $h(V_1)\le h(V_2)$ holds as points in $[0,l]$. 
The ordering of the vertices of $\Gamma_C$ obtained this way only depends on the minimal face of $Q^\vee_{\op{basic}}\otimes {\RR_{\ge 0}}$ that $\rho$ is contained in. 
In the following, we will always consider the ordering ``$\le$'' obtained from some $\rho$ that lies in the interior of $Q^\vee_{\op{basic}}\otimes {\RR_{\ge 0}}$. 
By continuity,
the vertices of an element $\rho$ in the boundary of $Q^\vee_{\op{basic}}\otimes {\RR_{\ge 0}}$ still satisfy the order induced from an element in the interior.
Hence, the partial ordering ``$\le$'' we will be satisfied by \emph{all} elements of $Q^\vee_{\op{basic}}\otimes {\RR_{\ge 0}}$.

We define
$$\bar Q^\vee_{\op{basic}}:= \{((V_\eta)_\eta,(l_e)_e)\in Q^\vee_{\op{basic}}\,\mid \,l_e=0 \hbox{ whenever }u_e=0\}$$
and $Q_0:=\bigoplus_{e: u_e=0} \NN$ and conclude from close inspection of \eqref{eq-basic} the following Lemma.
\begin{lemma} 
\label{decompose-Qdual}
$Q^\vee_{\op{basic}} =  \bar Q^\vee_{\op{basic}} \oplus Q^\vee_0$
\end{lemma}

By \eqref{decomposeMbar}, there are three possibilities for $P_\eta$, namely $\NN e_1$, $\NN e_2$ or $\NN^2$, depending on whether $\eta$ maps to $X_1\setminus X_2$, $X_2\setminus X_1$ or $D$.
\begin{definition} 
We call a vertex $V=V_\eta$ of $\Gamma_C$ $i$-\emph{rigid} if $P_\eta=\NN e_i$, i.e., $f(\eta)\not\in X_{3-i}$.
\end{definition}

For for $l\ge 0$, let $M_{\Gamma_C,l}$ denote the parameter space of pairs consisting of a tuple of edge lengths $(l_e)_{e}\in(\RR_{\ge 0})^{E(\Gamma_C)}$ and a continuous map $h:\Gamma_C\ra [0,l]$ where each edge $e$ of the graph $\Gamma_C$ is equipped with the metric affine structure of the interval $[0,l_e]$ and $h$ is affine linear on each edge and is
subject to the following constraints.
\begin{enumerate} 
\item[(T1)] 
The scaling factor of the restriction of $h$ to an edge $e$ of $\Gamma_C$ is $w_e$, 
\item[(T2)] the balancing condition \eqref{balancing-cond} holds for vertices that correspond to contracted components, 
\item[(T3)] \eqref{vertex-difference} is satisfied, 
\item[(T4)] $h$ maps the vertices respecting the partial ordering and finally 
\item[(T5)] 1-rigid vertices map to $0$ and 2-rigid vertices map to $l$.
\end{enumerate}
We call such a pair $((l_e)_{e},h)$ a \emph{tropical curve}.
The set $M_{\Gamma_C,l}$ can be identified with a polyhedron in the vector space $\RR\times(\RR^{E(\Gamma_C)})$ by picking any vertex $V_0$ of $\Gamma_C$ and mapping a tropical curve $h$ to the tuple $(h(V_0),(l_e)_e)$ given by the image of $V_0$ and the tuple of edge lengths. 
Let $M_{\Gamma_C}$ be the union $\bigcup_{l\ge 0} M_{\Gamma_C,l}$ which embeds in $\RR\times\RR\times \RR^{E(\Gamma_C)}$ as a convex cone by mapping $h$ to $(l,h(V_0),(l_e)_e)$. In particular, elements in $M_{\Gamma_C}$ can be added, i.e., the sum of tropical curves $h_1:\Gamma_C\ra [0,l_1]$ and $h_2:\Gamma_C\ra [0,l_1]$ is a tropical curve $h:\Gamma_C\ra [0,l_1+l_2]$.

Let $\bar M_{\Gamma_C,l}\subseteq M_{\Gamma_C,l}$ denote the subset of tropical curves with $l_e=0$ whenever $u_e=0$. 
The subset $\bar M_{\Gamma_C,l}$ is a polytope in $\RR\times\RR^{E(\Gamma_C)}$ because it is closed and for each $e$ holds $0\le l_e\le l$. We denote by $\bar M_{\Gamma_C}= \bigcup_{l\ge 0} \bar M_{\Gamma_C,l}$ the subcone of $M_{\Gamma_C}$. 
\begin{lemma} 
\label{trop-moduli-spaces}
\begin{enumerate}
\item $M_{\Gamma_C,l} = \{\rho\in (Q^\vee_{\op{basic}}\otimes \RR_{\ge 0}) \,\mid \, \rho(\one) = l \},$
\item $\bar M_{\Gamma_C,l} = \{ \rho\in (\bar Q^\vee_{\op{basic}}\otimes \RR_{\ge 0}) \,\mid \, \rho(\one) = l \},$
\item $M_{\Gamma_C,0} = Q^\vee_0\otimes \RR_{\ge 0}.$
\end{enumerate}
\end{lemma} 
\begin{proof}
All statements follow from the discussion before, except for the rigidity of vertices which holds because for a 1- or 2-rigid vertex, the map $V_\eta:P_\eta\ra\NN$ is entirely determined by $\one\mapsto l$, so the composition with $\NN^2\sra P_\eta$ now maps $e_1\mapsto l, e_2\mapsto 0$ in the 1-rigid case or the other way round in the 2-rigid case.
Note that (3) follows from \eqref{vertex-difference} because it implies $l_e=0$ whenever $u_e\neq 0$.
\end{proof}

Let $\Gamma$ be the metric graph obtained from $\Gamma_C$ by collapsing all edges with $u_e=0$. 
To be more precise, collapsing means that we inductively identify the vertices of an edge $e$ if $u_e=0$ and we delete the edge in the process, so that every edge $e$ of the resulting graph satisfies $u_e\neq 0$.
\begin{corollary} 
\label{moduli-GammaC}
$\bar M_{\Gamma_C,l}$ is the parameter space of tropical curves $h:\Gamma\ra[0,l]$ satisfying the conditions inherited from $\Gamma_C$.
\end{corollary}

For a toric monoid $Q$, we denote by $Q[1]$ the finite set of primitive generators for the rays, i.e., the primitive elements in the dimension one faces of $Q$.
\begin{definition} 
Given $\rho=((V_\eta)_\eta,(l_e)_e)\in Q^\vee_{\op{basic}}[1]$, we call a node $e$ of $C$ with $l_e\neq 0$ a \emph{splitting node}.
\end{definition}
Recall $\Omega(g, n, \beta)$ from \S\ref{sec-graphs}. 
In the remainder of this section, we are going to define a map 
\begin{equation}
\label{graph-to-curve}
\op{Trop}:\,\left\{ {C/s \ra X/\bk \hbox{ is a basic stable log map over a point }s}\atop{\hbox{together with }\rho\in Q^\vee_{\op{basic}}[1]\hbox{ such that }\rho(\one)\neq 0}\right\}\quad \ra\quad \Omega(g, n, \beta).
\end{equation}

\begin{lemma} 
$\left\{\rho\in Q^\vee_{\op{basic}}[1]\,\mid\,\rho(\one)\neq 0\right\}\quad = \quad\bar Q^\vee_{\op{basic}}[1]$
\end{lemma}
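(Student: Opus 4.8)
The claim to establish is the equality of two finite subsets of $Q^\vee_{\op{basic}}$: on the one hand, the primitive ray generators $\rho \in Q^\vee_{\op{basic}}[1]$ with $\rho(\one) \neq 0$; on the other hand, the primitive ray generators $\bar Q^\vee_{\op{basic}}[1]$ of the sub-monoid $\bar Q^\vee_{\op{basic}}$. The natural strategy is to exploit the splitting $Q^\vee_{\op{basic}} = \bar Q^\vee_{\op{basic}} \oplus Q^\vee_0$ from Lemma~\ref{decompose-Qdual} together with the observation that $\one$ pairs to zero with the whole summand $Q^\vee_0$. Indeed, by definition $Q_0 = \bigoplus_{e: u_e = 0}\NN$, and the canonical map $Q^\vee \to Q^\vee_{\op{basic}}$ sends $q_e \mapsto$ the $e$-th standard generator of the $\NN$-summands; but $\one = V_\eta(1,1)$ satisfies $u_e(\one) = 0$ for all $e$ by \eqref{ue-one-zero}, so $\one$, viewed through the pairing, is supported only on the generic-point directions and the $q_e$ with $u_e \neq 0$, hence annihilates $Q^\vee_0$. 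Concretely, the $\NN$-component of any $\rho = ((V_\eta)_\eta,(l_e)_e)$ indexed by an edge $e$ with $u_e = 0$ does not affect $\rho(\one)$.

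The key steps, in order. \textbf{Step 1:} Recall from Lemma~\ref{decompose-Qdual} that every $\rho \in Q^\vee_{\op{basic}}$ decomposes uniquely as $\rho = \bar\rho + \rho_0$ with $\bar\rho \in \bar Q^\vee_{\op{basic}}$ (i.e. $l_e = 0$ for all $e$ with $u_e = 0$) and $\rho_0 \in Q^\vee_0$ (supported only on the $l_e$ with $u_e = 0$). \textbf{Step 2:} Since $\one$ annihilates $Q^\vee_0$ as explained above, we have $\rho(\one) = \bar\rho(\one)$ for every $\rho$. In particular $\rho(\one) \neq 0$ forces $\bar\rho \neq 0$. \textbf{Step 3:} Analyze the rays. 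Because $Q^\vee_{\op{basic}}$ is the direct sum of $\bar Q^\vee_{\op{basic}}$ and the free monoid $Q^\vee_0 = \NN^{\{e : u_e = 0\}}$, a ray of $Q^\vee_{\op{basic}}$ is either a ray of $\bar Q^\vee_{\op{basic}}$ (the face $\rho_0 = 0$) or one of the coordinate rays $\NN \cdot q_e$ with $u_e = 0$ (inside $Q^\vee_0$). For the latter type of ray generator, $\rho(\one) = 0$; for the former, $\rho = \bar\rho \in \bar Q^\vee_{\op{basic}}[1]$. \textbf{Step 4:} Conversely, each ray generator of $\bar Q^\vee_{\op{basic}}$ remains a ray generator of the larger monoid $Q^\vee_{\op{basic}}$ (rays of a direct summand stay rays of the direct sum), and satisfies $\rho(\one) \neq 0$: this is where one uses that for a nonzero $\bar\rho$, the partial-ordering/balancing discussion forces some vertex to move, equivalently $l := \rho(\one)$ must be positive. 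More precisely, if $\rho \in \bar Q^\vee_{\op{basic}}$ is primitive and $\rho(\one) = 0$, then the associated tropical curve has $l = 0$, so by the last statement of Lemma~\ref{trop-moduli-spaces}, $\rho \in Q^\vee_0 \otimes \RR_{\ge 0}$; but $\bar Q^\vee_{\op{basic}} \cap (Q^\vee_0 \otimes\RR_{\ge 0}) = 0$ by the directness of the splitting, contradiction. Combining Steps 3 and 4 gives the set equality.

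The main obstacle I anticipate is Step 4 — the implication $\bar\rho \neq 0 \Rightarrow \bar\rho(\one) \neq 0$ — since it is the only part that is not purely formal bookkeeping with the direct sum decomposition; it genuinely uses the geometry of tropical curves (that a nonzero element of $\bar Q^\vee_{\op{basic}}$ produces a tropical curve which is not constant, hence has $l > 0$). Everything else is a routine unwinding of Lemma~\ref{decompose-Qdual} and the elementary fact that the ray generators of $A \oplus B$ for toric monoids are the union of the ray generators of $A$ and of $B$. I would present Step 4 crisply by invoking the very last statement of Lemma~\ref{trop-moduli-spaces} (``$M_{\Gamma_C,0} = Q^\vee_0 \otimes \RR_{\ge 0}$'') rather than re-deriving the balancing argument.
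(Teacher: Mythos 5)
Your proof is correct and follows essentially the same route as the paper's: both rely on the direct-sum decomposition $Q^\vee_{\op{basic}} = \bar Q^\vee_{\op{basic}} \oplus Q^\vee_0$ from Lemma~\ref{decompose-Qdual}, the observation that $\one$ annihilates $Q^\vee_0$, and the implication $\rho(\one)=0 \Rightarrow \rho \in Q^\vee_0$. The only cosmetic difference is that you outsource the last implication to the final statement of Lemma~\ref{trop-moduli-spaces}, whereas the paper re-derives it in place from the interiority of $\one$ in each $P_\eta$ together with \eqref{vertex-difference}.
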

\begin{proof} 
Since $Q^\vee_{\op{basic}}[1]$ is the disjoint union of $\bar Q^\vee_{\op{basic}}[1]$ and $Q^\vee_{0}[1]$, the assertion follows directly from part (3) of Lemma~\ref{trop-moduli-spaces}.
\end{proof}
The lemma implies that $\one$ does not lie in any proper face of $\bar Q_{\op{basic}}$.

\begin{figure}
\includegraphics{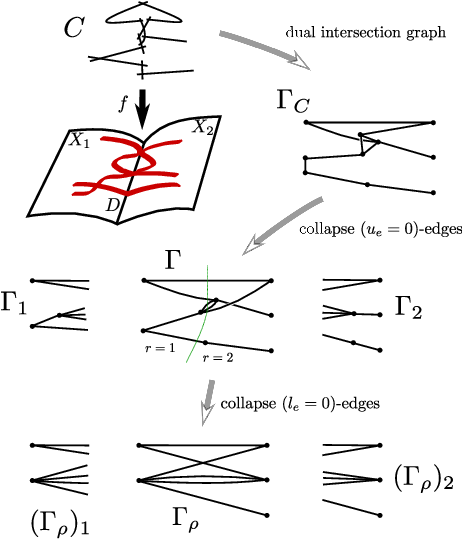}
\caption{An example of a stable log map and its associated graphs for a particular choice of $\rho$.}
\label{graphs-from-curves}
\end{figure}
We now define the map $\op{Trop}$. 
Let therefore $f:C/s \ra X/\bk$ and $\rho\in Q^\vee_{\op{basic}}[1]$ with $l:=\rho(\one)> 0$ be given. Consider the associated tropical curve $h:\Gamma\ra[0,l]$. We will modify $\Gamma$ to a bipartite graph $\Gamma_\rho$, see Figure~\ref{graphs-from-curves} for an example.
\begin{lemma} 
\label{lem-bipartition}
All vertices of $\Gamma$ map to either $0$ or $l$, hence we obtain a map 
$$r:\{\hbox{vertices of }\Gamma\}\ra \{1,2\},\qquad r(V)=\left\{\begin{array}{rl}1&\hbox{ if }V=V_\eta^\vee(1)=0,\\2&\hbox{ if }V=V_\eta^\vee(1)=l.\end{array}\right.$$
\end{lemma}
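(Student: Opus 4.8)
The plan is to use that $\rho$ spans a ray of the cone $\bar Q^\vee_{\op{basic}}\otimes\RR_{\geq 0}$ (indeed $\rho\in Q^\vee_{\op{basic}}[1]$ and $\rho(\one)>0$, so $\rho\in\bar Q^\vee_{\op{basic}}[1]$ by the lemma just proved), together with the description of the parameter polytope of tropical curves from Lemmas~\ref{trop-moduli-spaces} and~\ref{moduli-GammaC}. Since $\one$ lies in no proper face of $\bar Q_{\op{basic}}$ (established just above), every nonzero element of $\bar Q^\vee_{\op{basic}}\otimes\RR_{\geq 0}$ takes a positive value on $\one$, so $\bar M_{\Gamma_C,l}=\{\sigma\in\bar Q^\vee_{\op{basic}}\otimes\RR_{\geq 0}\mid\sigma(\one)=l\}$ is a bounded polytope, each of whose vertices spans a ray of the cone; as $\rho(\one)=l$, the point $\rho$ is a vertex of $\bar M_{\Gamma_C,l}$, i.e. no nondegenerate segment of $\bar M_{\Gamma_C,l}$ passes through it. I will contradict this under the assumption that some vertex of $\Gamma$ maps into the open interval $(0,l)$.

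First I would put coordinates on $\bar M_{\Gamma_C,l}$. By Lemma~\ref{moduli-GammaC} its points are the tropical curves $h\colon\Gamma\to[0,l]$ subject to the conditions inherited from $\Gamma_C$. Every edge $e$ of $\Gamma$ has $u_e\neq 0$ (the $u_e=0$ edges were collapsed), hence $\tilde u_e=\pm(1,-1)$ and $w_e>0$ by the discussion around \eqref{ue-one-zero}; so $h$ is determined by the tuple of vertex positions $p\colon V(\Gamma)\to[0,l]$, $p(V):=h(V)$, the length of $e$ being $|p(V)-p(V')|/w_e$, and by \eqref{vertex-difference} the only surviving constraint along $e$ is monotonicity: orienting $e$ from $V$ to $V'$ in accordance with $\tilde u_e$, one needs $p(V')\geq p(V)$. (The balancing condition and the placement of rigid vertices at the endpoints impose nothing further on $p$ here, exactly as in the proofs of Lemmas~\ref{trop-moduli-spaces} and~\ref{moduli-GammaC}.) Thus $\bar M_{\Gamma_C,l}=\{p\in[0,l]^{V(\Gamma)}\mid p(V')\geq p(V)\text{ for every oriented edge }V\to V'\}$, a polytope cut out by these inequalities and the box inequalities $0\leq p(V)\leq l$.

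Now suppose $p$ is a vertex of this polytope while $p(V_0)=t\in(0,l)$ for some vertex $V_0$ of $\Gamma$. Let $K\subseteq V(\Gamma)$ be the connected component of $V_0$ in the subgraph of $\Gamma$ spanned by the edges $e=\{V,V'\}$ with $p(V)=p(V')$; every vertex of $K$ then has position $t$. Let $p^\varepsilon$ be obtained from $p$ by adding $\varepsilon$ to $p(V)$ for each $V\in K$ and leaving all other positions unchanged. Then $p^\varepsilon\in\bar M_{\Gamma_C,l}$ for all sufficiently small $|\varepsilon|$: an edge with both endpoints in $K$, or with neither, keeps its endpoint-position difference; an edge with exactly one endpoint in $K$ satisfies its defining inequality strictly at $p$ --- otherwise its two endpoints would have equal position, making it one of the contracted edges joining $K$ to its complement and contradicting the maximality of $K$ --- so the inequality survives a small perturbation; and $0<t<l$ leaves room for $0\leq p^\varepsilon(V)\leq l$ on $K$. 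Hence $\{p^\varepsilon:|\varepsilon|\leq\delta\}$ is a nondegenerate segment of $\bar M_{\Gamma_C,l}$ through $p$ for $\delta$ small, contradicting that $p$ is a vertex. Therefore every vertex of $\Gamma$ maps to $0$ or to $l$, so $r$ is well defined.

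The only point that needs care is the coordinatization step --- that after imposing the edge-monotonicity inequalities the remaining conditions (balancing, rigid vertices at the ends) are vacuous --- but this is exactly what is extracted in the proofs of Lemmas~\ref{trop-moduli-spaces} and~\ref{moduli-GammaC}, so I would quote them rather than redo it. Everything else is elementary convex geometry, and the argument is insensitive to the sign conventions for orienting the edges of $\Gamma$, being symmetric under $V\leftrightarrow V'$ and under $0\leftrightarrow l$.
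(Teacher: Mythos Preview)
Your approach is essentially the paper's: show that if some vertex sits at an interior point $t\in(0,l)$ then one can wiggle it (together with everything at the same level connected to it) to produce a nontrivial segment through $\rho$ in $\bar M_{\Gamma_C,l}$, contradicting that $\rho$ is a ray generator. The paper says this in two sentences; your version is more careful in singling out the connected component $K$ rather than ``all vertices at the same spot,'' which makes the verification that edges leaving $K$ are strict transparent.

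One inaccuracy to fix: you assert that the rigid-vertex conditions ``impose nothing further on $p$,'' and cite Lemmas~\ref{trop-moduli-spaces} and~\ref{moduli-GammaC} for this. That is not what those lemmas say---Lemma~\ref{trop-moduli-spaces} explicitly lists ``1-rigid vertices map to $0$ and 2-rigid vertices map to $l$'' as an additional defining constraint of $M_{\Gamma_C,l}$, and these equalities are not implied by the box inequalities $0\le p(V)\le l$. So your polytope description is missing the affine constraints $p(V)=0$ (resp.\ $=l$) for vertices of $\Gamma$ containing a $1$-rigid (resp.\ $2$-rigid) vertex of $\Gamma_C$. Fortunately your perturbation argument is unaffected: every vertex in $K$ has position $t\in(0,l)$, hence cannot contain a rigid vertex of $\Gamma_C$, so the rigid-vertex equalities are untouched by $p\mapsto p^\varepsilon$. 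Just correct the polytope description (or simply note that rigid constraints pin some coordinates but none in $K$) and the proof stands.
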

\begin{proof} 
Recall the definition of $V_\eta^\vee(1)$ from \eqref{vertices-map}.
Assume to the contrary that the set of vertices
$$\Big\{V_\eta^\vee(1)\,\Big|\,\eta\hbox{ is the generic point of a curve component, }V_\eta^\vee(1)\neq 0,l\Big\}$$
is non-empty and let $V_1<...<V_s$ be an enumeration of the set. If $s=1$, set $V_2:=l$. Let $\epsilon>0$ be smaller than $(V_2-V_1)/2$.
We obtain a sum decomposition of vectors with strictly increasing entries 
$$(0,V_1,V_2,...,V_s,l)\ =\ (0,V_1/2-\eps,V_2/2,...,V_s/2,l/2)\ +\ (0,V_1/2+\eps,V_2/2,...,V_s/2,l/2),$$
and the summands on the right are linearly independent.
We can now write the tropical curve $h:\Gamma\ra[0,l]$ as a sum of tropical curves $h_1,h_2:\Gamma\ra[0,l/2]$ as follows. We require for a vertex $V$ of $\Gamma$ that
$h_i(V)=h(V)/2$ unless $h(V)=V_1$ in which case we set $h_1(V)=(V_1-\epsilon)/2$ and $h_2(V)=(V_1+\epsilon)/2$. 
With these prescriptions of where to map the vertives, there is a canonical choice of edge lengths $l_e$ for $h_1,h_2$ so that all defining conditions of a tropical curve are satisfied for $h_1$ and $h_2$. By construction, 
$h_1,h_2$ correspond to elements $\rho_1,\rho_2\in Q^\vee_{\op{basic}}\otimes\RR_{\ge 0}$ that satisfy $\rho_1+\rho_2=\rho$. However, $\rho_1,\rho_2$ are linearly independent and this contradicts the assumption $\rho\in Q^\vee_{\op{basic}}[1]$ because $\rho_1,\rho_2$ span a face of dimension at least $2$ and $\rho$ is contained in its relative interior.
\end{proof}
Equipped with the statement of Lemma~\ref{lem-bipartition}, we collapse all edges in $\Gamma$ that map constantly under $h$ (i.e., those that are not splitting nodes) and obtain a graph $\Gamma_\rho$ that is bipartite by means of the map $r:\{\hbox{vertices of }\Gamma_\rho\}\ra \{1,2\}$ induced from Lemma~\ref{lem-bipartition}.
Each vertex $V$ of $\Gamma_\rho$ is an equivalence class of vertices of the dual intersection graph $\Gamma_C$ of $C$ and thus a vertex $V$ of $\Gamma_\rho$ represents a connected union of curve components that we call $C_V$. 
Note that $C_V$ maps entirely into $X_{r(V)}$.
We decorate $V$ with the genus $g_V=g(C_V)$, curve class $\beta_V=[C_V]$ and $n_V=\{\hbox{markings on }C_V\}$ and then $\Gamma_\rho$ satisfies
 \eqref{genus}, \eqref{class_sum}, \eqref{partition_n}, \eqref{stability} because $\Gamma_C$ satisfies similar conditions.
It remains to verify \eqref{rel_class_sum} in order to have defined the map $\op{Trop}$ in \eqref{graph-to-curve} completely:
\begin{lemma} 
Given $V\in \Gamma_\rho$, we have
\label{eq-tangencies}
$$\beta_V.D =\deg(\shO_{X_{r(V)}}(D)|_{C_V})=\sum_{{e\in E(\Gamma_\rho)}\atop{V\in e}} w_e.$$
\end{lemma}
\begin{proof}
The first equality is clear. In order to prove the second equality, we need to recall the homomorphism $\tau_V:\Gamma(\tilde C_V,g^*\oM_X)\ra\ZZ$ from equation (1.10) in \cite{GS}. Here, $g\colon \tilde C_V\ra C_V\stackrel{f_V}{\ra} X$ is the composition of the normalization $\nu:\tilde C_V\ra C_V$ of an irreducible component $C_V$ of $C$, corresponding to a vertex $V$ of $\Gamma_C$, with the restriction $f_V$ of the stable map $f\colon C\ra X$ to $C_V$. Each section of $g^*\oM_X$ corresponds to an $\shO^\times_{\tilde C_V}$-torsor and the map $\tau_V$ associates to the torsor the degree of the corresponding line bundle.
The description of $\oM_X$ in \eqref{decomposeMbar} leads to a similarly simple description of $\Gamma(D,g^*\oM_X)$, namely 
$$\Gamma(\tilde C_V,g^*\oM_X) = \Gamma(\tilde C_V,g^{-1}\iota_1^{-1}\NN)\oplus \Gamma(\tilde C_V,g^{-1}\iota_2^{-1}\NN) = \NN^{\pi_0(g^{-1}(X_2))}\oplus \NN^{\pi_0(g^{-1}(X_1))},$$
and, by Lemma~\ref{DF-log}, the generators of the two occurences of the monoid $\NN$ in the middle correspond to the torsors $\shL_1,\shL_2$ respectively. 
We can say precisely how the map $\tau_V$ acts on each summand of $\NN$ on the right. 
For a connected compoment of $g^{-1}(X_i)$ that is a single point $x$, the map $\tau_V$ sends the corresponding generator of $\NN$ to the (positive) degree of the Cartier divisor $g^{-1}(X_i)$ at $x$. On the other hand, if $g^{-1}(X_i)$ is all of $\tilde C_V$, then 
$\tau_V$ maps the corresponding generator of $\NN$ to $\deg(g^*\shL_i)$. 
By part (2) of Lemma~\ref{DF-log}, we have $\deg(g^*\shL_i)=-\deg(g^*\shL_{3-i})$ and by part (1) the restriction of $\shL_{3-i}$ to $X_i$ is $\shO_{X_i}(-D)$. Hence,
$$\deg(g^*\shL_i)= -\deg(g^*\shL_{3-i}) = -\deg(g^*\shO_{X_i}(-D)) = (g:\tilde C_V\ra X_i).D.$$
In any event, the sum of the images of the generators of $\NN^{\pi_0(g^{-1}(X_2))}\oplus \NN^{\pi_0(g^{-1}(X_1))}$ under $\tau_V$ is zero and the sum of the images of
$\NN^{\pi_0(g^{-1}(X_i))}$ under $\tau_V$ equals $\deg(g^*\shL_i)$.

If $x\in\tilde C_V$ is a point that maps to a node $e$ of $C$ under the composition $\tilde C_V\ra C_V\hra C$ then $(g^*\oM)_x=P_{e}$ and we have the map $u_e:(g^*\oM)_x\ra\ZZ$ that we naturally extend to a map $\Gamma(\tilde C_V,g^*\oM_X)\ra \ZZ$ by composing with the natural map $\Gamma(\tilde C_V,g^*\oM_X)\ra (g^*\oM)_x$. The general balancing condition as proved in \cite{GS},\,Proposition 1.15 says that 
\begin{equation} \label{general-balance}
\tau_V + \sum_{x} \pm u_e = 0
\end{equation}
where the sum is over precisely those points $x\in\tilde C_V$ that map to nodes of $C$ and the sign $\pm$ is chosen to account for the ordering of the components adjacent to the node in the definition of $u_e$. The sign is $+1$ iff $V$ is the first component in that ordering and if both adjacent components are $V$, i.e., $e$ is a node of $C_V$, then the sum $\sum_{x} \pm u_e$ has the corresponding summand $u_e$ occuring twice with opposite signs, so we can ignore such nodes altogether when forming the sum. 
Recall that $u_e=w_e\tilde u_e$ where $\tilde u_e:\NN^2\ra\ZZ$ is either $(-1,1)$ or $(1,-1)$.
Evaluating \eqref{general-balance} on the generator of $\NN^{\pi_0(g^{-1}(X_i))}=\NN$ for $i$ chosen so that $C_V$ maps into $X_i$ yields 
\begin{equation} \label{general-balance-evaluated}
\beta_V.D + \sum_{x} \pm w_e = 0
\end{equation}
which is already close to the assertion. 
So far we only studied a single component of $C$, however, a single vertex $V'$ of $\Gamma_\rho$ correspond to several vertices $V$ of $\Gamma_C$, namely those that contract to $V'$. The assertion follows from summing up the equation \eqref{general-balance-evaluated} over all $V$ that contract to a specific vertex $V'$ of $\Gamma_\rho$. Necessarily, all associated components $C_V$ map into $X_i$ for $i=r(V')$ and $\beta_{V'}=\sum_V\beta_V$ and evaluating all $\tau_V$ on the generator of
$\NN^{\pi_0(g^{-1}(X_i))}=\NN$ respectively and summing over the $V$ that contract to $V'$ yields $\beta_{V'}.D$ as an intersection evaluated in $X_i$.
Those summands $w_e$ that correspond to a non-splitting edge will appear twice and with opposite sign in the sum and therefore cancel. The contribution from the splitting edges however all carry the same sign (either +1 or -1) because the sum of $\pm w_e\tilde u_e$ over the splitting edges $e$ is a sum of vectors pointing into the interval $[0,l]$ from either the endpoint $0$ or $l$ depending on whether $r(V')=1$ or $r(V')=2$. Evaluating also $\pm \tilde u_e$ on the generator of $\NN^{\pi_0(g^{-1}(X_i))}=\NN$ for each $V$ yields $-1$ and so the assertion follows.
\end{proof} 

\subsection{Generization}
\label{sec-specialization}
We have so far considered a curve over a single point $s$ in this section. 
Let us consider the case where $s$ is in the Zariski closure of another point $\eta$. 
A node of $C_s$ either gets smoothed in $C_\eta$ or it remains a node.
Hence, there is a natural collapsing map of dual intersection graphs $\Gamma_{C_s}\ra \Gamma_{C_\eta}$ and a natural map 
\begin{equation}
\label{specialization-map-Qbasic}
Q^s_{\op{basic}}\ra Q^\eta_{\op{basic}}
\end{equation}
that is a localization composed with modding out the resulting subgroup of invertibles. 
Dually, $(Q^\eta_{\op{basic}})^\vee\subseteq (Q^s_{\op{basic}})^\vee$ is the embedding of a face and hence
$(Q^\eta_{\op{basic}})^\vee[1] \subseteq (Q^s_{\op{basic}})^\vee[1]$. 
Given $\rho\in (Q^\eta_{\op{basic}})^\vee[1]$, the map \eqref{specialization-map-Qbasic} maps $\one$ to $\one$ and commutes with $\rho$, so we get the same $l=\rho(\one)$ for $s$ and $\eta$.
If a node $e$ gets smoothed under generization then $q_e$ (see just after Definition~\ref{def-stablemap}) maps to zero under \eqref{specialization-map-Qbasic}, hence $\rho(q_e)=l_e=0$, so the node $e$ is not a splitting node.
We conclude the following lemmata.
\begin{lemma} 
\label{specialization} If $s\in\overline\eta$ and $\op{Trop}_s$, $\op{Trop}_\eta$ denote the respective maps given in \eqref{graph-to-curve}, then
$\op{Trop}_\eta$ is the composition of the injection $$\left\{\left.\rho \in (Q^\eta_{\op{basic}})^\vee[1]\,\right|\,\rho(\one)\neq 0\right\}\quad \ra\quad \left\{\left.\rho \in(Q^s_{\op{basic}})^\vee[1]\,\right|\,\rho(\one)\neq 0\right\}$$
with $\op{Trop}_s$. 
In particular, for every $\rho\in (Q^\eta_{\op{basic}})^\vee[1]$ with $\rho(\one)\neq 0$, the stable log maps over $s$ and $\eta$ together with $\rho$ respectively give the same tropical curve $\Gamma_\rho\ra[0,l]$.
\end{lemma}
If $\oM$ is a sheaf of monoids on a scheme $S$, we call a subsheaf $\oF\subset\oM$ a \emph{sheaf of facets} if $\oF_x\subset\oM_x$ is a facet for every $x\in S$.
If $M$ is a toric monoid, then its facets are in one-to-one correspondence with the elements $\rho\in M^\vee[1]$ by mapping $\rho$ to $\rho^\perp:=\{m\in M|\rho(m)=0\}$.
By standard toric geometry, if $\rho\in (Q^\eta_{\op{basic}})^\vee[1]$,
the generization map \eqref{specialization-map-Qbasic} sends the facet $F^s_\rho=\rho^\perp$ surjectively onto
the facet $F^\eta_\rho=\rho^\perp \subset Q^\eta_{\op{basic}}$. 
Every other facet of $Q^s_{\op{basic}}$ does not map to a facet under \eqref{specialization-map-Qbasic}.
This analysis implies the following two statements.
\begin{lemma} 
\label{lem-sheaf-facets}
If $C/S\ra X/\bk$ is a basic stable log map, $s\in S$ a point, $\rho\in (\oM_{S,s})^\vee[1]$ and $F^s_\rho=\rho^\perp\subset \oM_{S,s}$ then by the coherence of the log structure on $S$ there is a unique maximal closed subset $W$ of $\Spec\shO_{S,s}$ together with a sheaf of facets $\oF\subseteq \oM_S|_W$ so that $\oF_s=F^s_\rho$. 
\end{lemma} 

\begin{proposition} 
\label{prop-generization}
Let $C/S\ra X/\bk$ be a basic stable log map with $S$ connected and $\rho\in\Gamma(S,\oM_S^\vee)$ with $\rho(\one)\neq 0$ such that $\rho$ maps to an element of $\oM_{S,s}^\vee[1]$ for each $s\in S$.
Then all tropical curves $h:\Gamma_\rho\ra [0,l]$ obtained from $\rho$ at different points $s\in S$ are naturally identified and the corresponding facets $F^s_\rho$ define a sheaf of facets $\oF_S\subset\oM_S$.
\end{proposition}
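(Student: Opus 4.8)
The plan is purely to globalize the pointwise statements of Lemmas~\ref{specialization} and \ref{lem-sheaf-facets}. First I would record what is already known about the sheaf $\oM_S$: by \eqref{eq-basic} its stalk at a point $s$ is $\oM_{S,s}=Q^s_{\op{basic}}$, and for $s$ in the closure of $\eta$ the generization map $\oM_{S,s}\to\oM_{S,\eta}$ is the map \eqref{specialization-map-Qbasic}; dually $\oM_S^\vee:=\shHom(\oM_S,\underline{\NN})$ has stalks $(Q^s_{\op{basic}})^\vee$, so the germs of the global section $\rho$ satisfy $\rho_s=\rho_\eta\circ(\text{generization map})$, and by hypothesis $\rho_s\in\oM_{S,s}^\vee[1]$ for every $s$. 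Because \eqref{specialization-map-Qbasic} carries $\one$ to $\one$, all of this puts us exactly in the situation of Lemmas~\ref{specialization} and \ref{lem-sheaf-facets} along every generization in $S$.

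For the first assertion I would note that $l:=\rho(\one)$ is a global section of the constant sheaf $\underline{\NN}$ over the connected scheme $S$, hence a single integer (if $l=0$ the tropical curves are all constant and there is nothing to identify, so assume $l>0$); thus the target $[0,l]$ is the same at every point. For each $s$ the constructions of \S\ref{section-graphs-to-curves} attach a tropical curve $h_s:\Gamma_{\rho_s}\to[0,l]$ to $(f_s,\rho_s)$, and by Lemma~\ref{specialization} any two of these are canonically identified whenever one of the two points lies in the closure of the other. Since any two points of a connected Noetherian scheme are linked by a finite chain of generizations and specializations — because the incidence graph of the finitely many irreducible components of a connected scheme is connected — all the $h_s$ are identified with one tropical curve $h:\Gamma_\rho\to[0,l]$, which is the first claim.

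For the sheaf of facets I would consider the monoid-sheaf homomorphism $\langle\rho,-\rangle:\oM_S\to\underline{\NN}$ and let $\oF_S\subseteq\oM_S$ be the preimage of $0$, i.e.\ $\oF_S(U)=\{m\in\oM_S(U)\mid\langle\rho,m\rangle=0\}$; this is evidently a sub-monoid-sheaf of $\oM_S$. Its stalk at a geometric point $\bar s$ is $\rho_{\bar s}^\perp=\{m\in\oM_{S,\bar s}\mid\rho_{\bar s}(m)=0\}$: the inclusion $\supseteq$ is clear, and $\subseteq$ holds because a germ $m$ with $\rho_{\bar s}(m)=0$ spreads to a section $\tilde m$ over an \'etale neighbourhood on which $\langle\rho,\tilde m\rangle$, being a section of the constant sheaf $\underline{\NN}$, is locally constant and hence vanishes identically near $\bar s$. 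By hypothesis $\rho_{\bar s}$ is a primitive ray generator of the saturated toric monoid $\oM_{S,\bar s}^\vee$, so $\rho_{\bar s}^\perp$ is a facet of $\oM_{S,\bar s}$; and by Lemma~\ref{lem-sheaf-facets} the generization maps of $\oM_S$ carry these facets into one another compatibly with \eqref{specialization-map-Qbasic}. Hence $\oF_S$ is the asserted sheaf of facets $F_\rho$.

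I do not expect a serious obstacle here: the proposition really is a formal consequence of the two lemmata. The only points that need care are the identification of the stalks and generization maps of $\oM_S$ and $\oM_S^\vee$ with the data of \eqref{eq-basic} and \eqref{specialization-map-Qbasic}, and the soft observation that $\langle\rho,-\rangle$ is a morphism of sheaves into the \emph{constant} sheaf $\underline{\NN}$, so that $\langle\rho,\tilde m\rangle$ is locally constant and the preimage of $0$ has stalks computed pointwise; the latter is exactly what promotes the individual faces $\rho_{\bar s}^\perp$ to an honest subsheaf of facets.
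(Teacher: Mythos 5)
Your proof is correct and takes the same approach as the paper, which simply asserts (without writing details) that Lemmas~\ref{specialization} and~\ref{lem-sheaf-facets} directly imply the proposition. You spell out the two points left implicit: $l=\rho(\one)$ is a section of the constant sheaf $\underline{\NN}$ and hence a single integer over connected $S$, and $\oF_S$ defined as the preimage of $0$ under $\langle\rho,-\rangle:\oM_S\to\underline{\NN}$ has the correct stalks $\rho_{\bar s}^{\perp}$ because sections of $\underline{\NN}$ are locally constant.
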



\section{Splitting stable log maps}\label{splitting}
As in the previous section, consider a stable log map $C/s \ra X/\bk$. Let $\oM_s = Q_{\op{basic}}$ be the associated basic monoid (the dual of $Q^\vee_{\op{basic}}$ in \eqref{eq-basic}). 
We also fix a primitive ray generator $\rho\in Q_{\op{basic}}^\vee[1]$ with $l:=\rho(\one)>0$. 
 The dual intersection graph $\Gamma_C$ of $C$ collapses to $\Gamma$ and then further to $\Gamma_\rho$. The map $r:\Gamma\ra\{1,2\}$ from Lemma~\ref{lem-bipartition} lifts uniquely to $r:\Gamma_C\ra\{1,2\}$ by composition with the collapsing.
Let $\Gamma_i$ denote the possibly disconnected subgraph of $\Gamma$ given by the vertices with $r(V)=i$ and furthermore we include ``half-edges'' at these vertices, one for each edge of a splitting node, see Figure~\ref{graphs-from-curves} for an example.
We similarly define $(\Gamma_\rho)_i$ which is obtained from $\Gamma_i$ by collapsing $(l_e=0)$-edges. 
We also similarly define $(\Gamma_C)_i$. 
The set of vertices of $(\Gamma_C)_i$ inherits the partial order from $\Gamma_C$. 
We call a continuous map $h:(\Gamma_C)_1\ra[0,\infty)$ a tropical curve if it satisfies the analogous conditions (T1) to (T5). 
Here, (T5) is applied only to $1$-rigid vertices. We similarly obtain a notion of tropical curve for maps $h:(\Gamma_C)_2\ra(-\infty,0]$.
Next consider the set
$$Q_1^\vee := \left\{ h:(\Gamma_C)_1\ra[0,\infty) \,\left|\, \begin{array}{c}  h \hbox{ is a tropical curve with $h(V)$ integral for all vertices $V$,}\\ l_e\in\ZZ_{\ge 0}\hbox{ for all compact edges }e,\hbox{ rigid vertices map to }0\end{array}\right.\right\}.$$
We similarly define $Q_2^\vee = \left\{ h:(\Gamma_C)_2\ra(-\infty,0] \,|...\right\}$. Note that $Q_1^\vee, Q_2^\vee$ are monoids.
Since $(\Gamma_C)_1$ decomposes into connected components, we have
\begin{equation}\label{decomposeQ1}
Q_1^\vee=\bigoplus_{r(V)=1} Q_V^\vee
\end{equation}
where the sum is over the vertices of $(\Gamma_\rho)_1$
and $Q_V^\vee$ is the parameter space of tropical curves with domain the component of $(\Gamma_C)_1$ indexed by $V$.
We similarly define $\bar Q_1^\vee := \left\{ h:\Gamma_1\ra[0,\infty)\mid...\right\}$, we have $\bar Q_1^\vee=\bigoplus_{r(V)=1} \bar Q_V^\vee$ and a similar statement for $\bar Q_2^\vee$. Set $Q_i:=(Q_i^\vee)^\vee$ and $\bar Q_i:=(\bar Q_i^\vee)^\vee$.
\begin{lemma} 
\label{split-facet}
The facet $F_\rho:=\rho^\perp\subset Q_{\op{basic}}$ associated to $\rho$ satisfies
$$F_\rho=Q_1\times Q_2.$$
\end{lemma}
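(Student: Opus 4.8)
The plan is to prove the dual statement $F_\rho^\vee\cong Q_1^\vee\oplus Q_2^\vee$ and then dualize, using reflexivity of fs monoids ($F_\rho$ is a face of the fs monoid $Q_{\op{basic}}$, hence fs, and $Q_i=(Q_i^\vee)^\vee$ by definition). Since $\rho$ is a primitive ray generator, $F_\rho=\rho^\perp$ is the facet of $Q_{\op{basic}}$ dual to the ray $\RR_{\ge 0}\rho\subset Q^\vee_{\op{basic}}$, so $F_\rho^\vee=\im\big(Q^\vee_{\op{basic}}\to(Q^\vee_{\op{basic}})^\gp/\ZZ\rho\big)$, the restriction map $Q^\vee_{\op{basic}}\to F_\rho^\vee$ being surjective with fibres the cosets of $\ZZ\rho$ (the standard description of the dual of a face of an fs monoid). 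By Lemma~\ref{trop-moduli-spaces} I regard $Q^\vee_{\op{basic}}$ as the monoid of integral tropical curves $h:\Gamma_C\to[0,l]$ ($l$ varying, with the usual constraints) and $\rho$ as the curve $h_\rho:\Gamma_C\to[0,l_0]$, $l_0:=\rho(\one)>0$; being a ray generator, $h_\rho$ sends every vertex to $0$ or $l_0$ (Lemma~\ref{lem-bipartition}), which is the bipartition $r$, the splitting nodes are exactly the edges of $\Gamma_C$ joining the two sides (of $h_\rho$-length $l_0/w_e$), and every edge internal to a single side has $h_\rho$-length $0$.

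The isomorphism is restriction to the two sides. Define $r_1:Q^\vee_{\op{basic}}\to Q_1^\vee$ by sending $h$ to its restriction to $(\Gamma_C)_1$: retain the positions $h(V)$ of the side-$1$ vertices and the lengths $l_e$ of the side-$1$-internal edges, and read each splitting node at a side-$1$ vertex as a half-edge pointing to $+\infty$. This lands in $Q_1^\vee$ because the balancing condition \eqref{balancing-cond} for $h$ at a side-$1$ vertex $V$ is, term by term, the balancing condition of $(\Gamma_C)_1$ at $V$, the contributions of the splitting nodes at $V$ becoming the half-edge contributions, while $1$-rigid vertices sit at $0$ and, since $l_0>0$, no $2$-rigid vertex lies on side $1$. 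Symmetrically, $r_2(h):(\Gamma_C)_2\to(-\infty,0]$, $r_2(h)(V_\eta):=h(V_\eta)-h(\one)$, which is $\le 0$ since the position $h(V_\eta)=\langle h,V_\eta(e_2)\rangle$ is at most $\langle h,V_\eta(e_1)+V_\eta(e_2)\rangle=h(\one)$. Both maps annihilate $\rho$: $r_1(h_\rho)=0$ because all side-$1$ positions and side-$1$-internal lengths vanish for $h_\rho$, and $r_2(h_\rho)=0$ because $h_\rho(V_\eta)=l_0=h_\rho(\one)$ on side $2$ while side-$2$-internal lengths vanish. Hence $(r_1,r_2)$ descends to a monoid map $\bar r:F_\rho^\vee\to Q_1^\vee\oplus Q_2^\vee$.

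It remains to see that $\bar r$ is an isomorphism. \emph{Injectivity}: if $r_1(h)=0=r_2(h)$, then every side-$1$ vertex of $h$ lies at $0$, every side-$2$ vertex at $l:=h(\one)$, every internal edge has length $0$, and the scaling constraint forces the splitting node $e$ to have length $l/w_e$; integrality forces $l_\Gamma\mid l$, so $h=(l/l_\Gamma)\,h_\rho\in\ZZ\rho$ and $[h]=0$. \emph{Surjectivity}: given $(h_1,h_2)$, glue to a curve $h:\Gamma_C\to[0,l]$ keeping the side-$1$ positions from $h_1$, placing the side-$2$ positions at $h_2+l$, and giving the splitting node $e=(V_e,V'_e)$ the length $l_e:=(h_2(V'_e)+l-h_1(V_e))/w_e$; one must show that for a suitable $l$ (a large multiple of $l_\Gamma$) all $l_e$ become non-negative integers and $h$ is a genuine element of $Q^\vee_{\op{basic}}$ restricting to $(h_1,h_2)$. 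I expect this last point — that the two ``one-sided'' tropical curves can be aligned into a single one with integral gluing lengths — to be the only genuine obstacle, and it should come down to the connectedness of $C$ together with the balancing conditions propagating the needed divisibilities along $(\Gamma_C)_1$ and $(\Gamma_C)_2$. Granting it, $\bar r$ is an isomorphism and dualizing yields $Q_1\times Q_2=(Q_1^\vee\oplus Q_2^\vee)^\vee\cong(F_\rho^\vee)^\vee=F_\rho$.
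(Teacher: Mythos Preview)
Your approach is essentially the same as the paper's: both prove the dual statement $F_\rho^\vee\cong Q_1^\vee\oplus Q_2^\vee$ via the tropical-curve interpretation, define the splitting/restriction map $\pi$, and argue that it is surjective with $\pi^{-1}(0)=\NN\rho$. The paper additionally first invokes Lemma~\ref{decompose-Qdual} to split off the $Q_0$-summand and work with $\bar Q^\vee_{\op{basic}}$, distributing the $u_e=0$ edges to the two sides afterwards; you instead carry these edges along as internal edges of $(\Gamma_C)_i$, which amounts to the same thing. For injectivity, the paper argues that every vertex of $\Gamma_\rho$ has $\tau_V\neq 0$ and hence contains a rigid vertex in its equivalence class, so an element of $\pi^{-1}(0)$ is determined by $l$; your argument reaches the same conclusion more directly from the bipartition and then identifies $\rho(\one)=l_\Gamma$ via primitivity.

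You are right to isolate the integrality of the splitting-edge lengths in the surjectivity step as the substantive point. The paper treats this in one clause (``modifying $l_0$ if needed so that each $l_e$ is integral'') without further justification, so your proof is at the same level of rigor as the paper's here. Your stated intuition---that the compatibility of the congruences $l\equiv h_1(V_1^e)-h_2(V_2^e)\pmod{w_e}$ should be forced by the balancing conditions propagated along $(\Gamma_C)_1$ and $(\Gamma_C)_2$---is the right place to look, but neither you nor the paper actually carries this out.
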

\begin{proof} 
In light of Lemma~\ref{decompose-Qdual}, first note that it suffices to prove a similar statement for the facet $\bar F_\rho=\rho^\perp$ of $\bar Q_{\op{basic}}$, the dual of $\bar Q^\vee_{\op{basic}}$. 
Indeed, the duals of the summands of $Q_0^\vee=\oplus_e \NN$ get distributed over $Q_1$ and $Q_2$ depending on whether the edge $e$ contracts to $0$ or $l$ under the tropical curve map $h$ corresponding to $\rho$.

We prove the dual statement, i.e., $\bar F_\rho^\vee = \bar Q^\vee_1\oplus \bar Q^\vee_2.$
Note that $\bar F_\rho^\vee=(\bar Q^\vee_{\op{basic}} + \ZZ\rho) / \ZZ\rho$.
There is a natural homomorphism of monoids 
\begin{equation} \label{project-to-splitting}
\pi:\bar Q^\vee_{\op{basic}}\ra \bar Q^\vee_1\oplus \bar Q^\vee_2
\end{equation}
that maps a tropical curve $h:\Gamma \ra [0,l']$ to the pair $(h_1:\Gamma_1 \ra [0,\infty), h_2:\Gamma_2 \ra (-\infty,0])$ by splitting the curve $h$ at the splitting edges and turning these edges into rays (and translating $l'$ to zero for $h_2$).
We verify that the map is surjective, so we pick a pair $(h_1,h_2)$ on the right hand side.
Take $l_0\in\NN$ larger than the sum of all $l_e$ occurring in $h_1$ and $h_2$. Now translate $h_2$ by $l_0$ to become $\Gamma_2\ra(-\infty,l_0]$.
We can extend this combination of maps of vertices of $\Gamma_1,\Gamma_2$ to a viable tropical curve $h:\Gamma\ra [0,l_0]$ by giving an edge $e$ between vertices $V_1,V_2$ with $r(V_i)=i$ the length $l_e=(h_2(V_2)-h_1(V_1))/w_e$, modifying $l_0$ if needed to ensure that each $l_e$ is integral. 
One verifies that (T1)-(T5) hold, so we verified the surjectivity of $\pi$.
Finally, we need to show that $\pi^{-1}(0)=\NN\rho$. 
A curve that maps to zero under $\pi$ is characterised by the property that all vertices of $h_1,h_2$ are zero (in $[0,\infty)$ and  $(-\infty,0]$ respectively). 
In terms of edge lengths of the original curve, these are either zero if the edge $e$ is not a splitting edge or otherwise $l_ew_e=l'$ for some fixed positive integer $l'$ if the edge is a splitting edge. Such a curve is precisely $\frac{l'}l \rho$ where $l=\rho(\one)$ denotes the length of the interval $[0,l]$ that the tropical curve represented by $\rho$ maps to.
\end{proof}

Say we are given a basic stable log map $C/S\ra X/\bk$ with $S$ connected and also a $\rho\in\Gamma(S,\oM_S^\vee)$ that maps to an element of $\oM_{S,s}^\vee[1]$ for all $s\in S$. 
By Proposition~\ref{prop-generization}, this induces a sheaf of facets $\oF_S\subset \oM_{S}$ that is on stalks given by $F_\rho=\rho^\perp$. 
We obtain a new log structure on $S$ via $\shF_S:=\shM_S\times_{\oM_S}\oF_S$.

Also by Proposition~\ref{prop-generization}, we obtain the same tropical curve $h:\Gamma_\rho\ra [0,l]$ from all points of $S$. 
After replacing $S$ by a finite connected cover if needed, we can order the edges of $\Gamma_\rho$ as $e_1,...,e_r$ and denote this edge-marked curve by $\tilde\Gamma_\rho$. In other words, we mark the splitting nodes $e_i:S\ra C$.
Let $C_i$ be the possibly disconnected union of components of $C$ that are $(l_e\!=\!0)$-edge-contraction-equivalent to vertices $V$ of $\Gamma_\rho$ with $r(V)=i$. 
Since $C_1$ and $C_2$ intersect in the splitting nodes, we have a cocartesian (alias pushout) diagram
\begin{equation}\label{glue-diag} 
\begin{CD} 
        \uS^r @>{(e^1_1,...,e^1_r)}>> \underline C_1 \\
        @V{(e^2_1,...,e^2_r)}VV @VVV \\
        \underline C_2 @>>> \underline C.
\end{CD}
\end{equation}
Recall that $C_i$ maps into $X_i$ under $f:C\ra X$. 
In the following, we set $i=1$. By symmetry, the case $i=2$ works analogously.
As said in Remark~\ref{rem-log-Xi}, $X_1$ carries the divisorial log structure by $D\subset X_1$ and there is a natural log morphism
$$(\uX_1,\shM_X|_{\uX_1})\ra X_1$$
via the injection $\shM_{X_1}\hra \shM_X|_{\uX_1}$.
We may restrict $f:C\ra X$ (as a log map) to $C_1$, i.e., $\shM_{C_1}=\shM_C|_{C_1}$, and compose with the above map to obtain a map $C_1\ra X_1$.
We will find a natural sub-log-structure $\shF_{C_1}\subset \shM_{C_1}$ giving a commutative diagram
\begin{equation}
\label{C1-square} 
\begin{CD} (C_1,\shF_{C_1}) @>f_1>> X_1 \\
        @V{\pi}VV @VVV \\
        (\uS,\shF_S) @>>> \ubk. 
\end{CD}
\end{equation}
The left vertical map $\underline C_1\ra \uS$ has natural sections $e^1_1,...,e^1_r$ in addition to the usual markings given by the markings of the splitting nodes of $C$. 
Including the additional markings, the diagram of underlying schemes in \eqref{C1-square} constitutes a stable map because $f$ is a stable map. 
To find $\shF_{C_1}$, away from nodes and marked points on $C_1$, we simply take the pullback $\pi^*\shF_S$ (i.e., make $\pi$ strict there).
Furthermore, it suffices to give $\oF_{C_1}\subset \oM_{C_1}$ and then set $\shF_{C_1}:=\shM_{C_1}\times_{\oM_{C_1}} \oF_{C_1}$. 
Generic strictness reduces this to a local problem, looking at markings and nodes. At an ordinary marked point $x$, we have $\oM_{C_1,x}=\oM_{C,x}=\oM_{S,\pi(x)}\oplus\NN$ and we pick the substalk $\oF_{C_1,x}:=\oF_{S,\pi(x)}\oplus\NN$.
At a node $x$ in $C_1$, so not a splitting node, we set $\oF_{C_1,x} := \oF_{S,x}\oplus_\NN \NN^2 \subset \oM_{S,x}\oplus_\NN \NN^2 = \oM_{C,x}$ and this works because the map $\NN\ra \oM_{S,x}, 1\mapsto q_e$ factors through $\oF_{S,x}$ (indeed it maps to $\rho^\perp$ because $\rho(q_e)=l_e=0$ for all non-splitting nodes). 
Finally, for $x=e^1_j$ a splitting node, we take for $\oF_{C_1,x}$ the submonoid $\oF_{S,x}\oplus\NN\subset \oM_{S,x}\oplus_\NN \NN^2$ where the $\NN$-summand embeds in the second copy (the one that corresponds to $i=2$) on the right.
We thus produced \eqref{C1-square}.

Note that there is a decomposition in connected components $C_1=\coprod_{{V\in\Gamma_\rho}\atop{r(V)=1}} C_V$. 
\begin{proposition} 
\label{prop-split}
Given a basic stable log map $C/S\ra X/\bk$ together with $\rho\in\Gamma(S,\oM_S)$ that maps to an element of $\oM_{S,s}[1]$ for all $s\in S$ and an ordering of the edges of the resulting tropical curve $\Gamma_\rho$,
\begin{enumerate}
\item the diagram \eqref{C1-square} obtained from this input data constitutes a stable log map with contact order data given by the weights of the unbounded edges of $(\Gamma_\rho)_1$. Here $C_1$ is potentially disconnected and 
\item the collection of inclusions $Q_1\subseteq \oF_{S,s}$ for all $s\in S$ given via Lemma~\ref{split-facet} constitutes a subsheaf $\oQ_1$ of monoids of $\oF_S$ and the fibre product
$\shM^1_S:=\shF_S\times_{\oF_S} \oQ_1$ is the basic log structure for the diagram \eqref{C1-square}. 
Similarly, the decomposition $\eqref{decomposeQ1}$ yields subsheaves $\oQ_V\subset \oF_S$ that give the basic log structure $\shM^V_S= \shF_S\times_{\oF_S} \oQ_V$ of the connected components of $C_1$.
Furthermore, the map $\shM^1_S\ra \shF_S$ (respectively $\shM^V_S\ra \shF_S$) realizes \eqref{C1-square} (respectively the $V$-component of it) as the pullback from this basic log structure.
\end{enumerate}
\end{proposition}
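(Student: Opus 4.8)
The plan is to reduce both parts to stalk-wise statements over $S$, applying the tropicalization of \S\ref{section-graphs-to-curves} to the target $X_1/\ubk$ and then gluing along $S$ via Proposition~\ref{prop-generization}.

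For (1), log smoothness and integrality of $\pi$ in \eqref{C1-square} is local on $C_1$: at the ordinary markings $x$ the chart $\oF_{C_1,x}=\oF_{S,\pi(x)}\oplus\NN$ is the standard one; at an internal node $x$ of $C_1$ the chart $\oF_{S,x}\oplus_\NN\NN^2$ with $1\mapsto q_e$ works because such a node is automatically non-splitting --- its two branches both have $r$-value $1$, whereas the two ends of an edge of $\Gamma_\rho$ have distinct $r$-values --- so $q_e\in\rho^\perp=\oF_{S,x}$; and at a splitting node $e^1_j$ the chart $\oF_{S,x}\oplus\NN$ is of marked-point type, so $e^1_j$ is a genuine new marking. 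That $f_1$ is a log morphism uses Remark~\ref{rem-log-Xi}: along $D$, $\oM_{X_1}$ is generated by the class $e_2\in\oM_X$, and at a generic point $\eta$ of $C_1$ one has $\rho(f^\flat(e_2))=\rho(V_\eta(e_2))=V_\eta^\vee(1)=0$ since $r(V_\eta)=1$ puts $V_\eta$ at $0\in[0,l]$ by Lemma~\ref{lem-bipartition}; thus $f^\flat(e_2)\in\rho^\perp=\oF_S$, and this propagates to the chosen local models at nodes and markings. Commutativity over $\ubk$ is automatic, and $\underline{f_1}$ is stable: $\underline{C}_1$ is a flat family of nodal curves by \eqref{glue-diag}, and any component of $C_1$ contracted by $\underline{f_1}$ was already contracted by $\underline{f}$ via the closed immersion $\uX_1\hookrightarrow\uX$, hence carried $\ge 3$ (resp.\ $\ge 1$ in the elliptic case) special points of $C$, all of which survive in $C_1$ --- each splitting node among them becoming a marking $e^1_j$. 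Finally the contact order of $e^1_j$ to $D$ equals $w_e$ by chasing the butterfly diagram \eqref{butterfly} and the defining relation \eqref{node-relation} for $w_e$; thus \eqref{C1-square} carries exactly the contact data recorded by $(\Gamma_\rho)_1$.

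For (2), I first observe that the product decompositions $\oF_{S,s}=Q_1^s\times Q_2^s$ of Lemma~\ref{split-facet} at varying $s$ are compatible with the generization maps of $\oF_S$: this follows from Lemma~\ref{specialization} (no splitting node is smoothed under generization), so the first factors $Q_1^s$ assemble into a subsheaf of submonoids $\oQ_1\subset\oF_S$, and likewise \eqref{decomposeQ1} yields the $\oQ_V$. Next I identify the basic monoid of \eqref{C1-square} stalk-wise: running the analysis of \S\ref{section-graphs-to-curves} for the target $X_1/\ubk$ --- whose $\oM_{X_1}$ is generated purely by the $D$-direction (Remark~\ref{rem-log-Xi}), so its cone is $[0,\infty)$ and a marked point of contact order $w$ to $D$ becomes a leg entering the balancing condition with weight $w$ but contributing no new monoid coordinate (its position being pinned by the adjacent vertex, compare \eqref{tropical-sequence}) --- shows that the dual basic monoid at $s$ is precisely the space of tropical curves $h:(\Gamma_C)_1\to[0,\infty)$ from the definition of $Q_1^\vee$, i.e.\ it equals $Q_1^\vee$ (and equals $Q_V^\vee$ for the component $C_V$). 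Consequently the minimal log structure $\shM^{\mathrm{min}}_S$ associated to \eqref{C1-square} comes with a canonical map $\shM^{\mathrm{min}}_S\to\shF_S$ whose image on characteristics is, by Definition~\ref{def-basic}, the first inclusion $Q_1\hookrightarrow Q_1\times Q_2=\rho^\perp$ (dual to the projection $Q_1^\vee\oplus Q_2^\vee\to Q_1^\vee$), i.e.\ exactly $\oQ_1\subset\oF_S$. Hence $\shM^{\mathrm{min}}_S\to\shF_S$ factors through $\shM^1_S=\shF_S\times_{\oF_S}\oQ_1$, and $\shM^{\mathrm{min}}_S\to\shM^1_S$ is an isomorphism, being the identity on $\shO_S^\times$ and an isomorphism on characteristics. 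The identical argument with $\oQ_V$ in place of $\oQ_1$ gives $\shM^V_S=\shF_S\times_{\oF_S}\oQ_V$ as the basic log structure of the $V$-component, and the last claim --- that $\shM^1_S\to\shF_S$ (resp.\ $\shM^V_S\to\shF_S$) exhibits \eqref{C1-square} (resp.\ its $V$-component) as a pullback of the basic/universal family --- is then immediate from the universal property of the minimal log structure.

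I expect the main obstacle to be the stalk-wise identification of the basic monoid of $C_1\to X_1$ with $Q_1$: this requires re-running the tropicalization of \S\ref{section-graphs-to-curves} for a target with a \emph{divisorial} rather than normal-crossings log structure, checking in particular that marked points of positive contact order to $D$ contribute legs that affect only the balancing condition, so that the component- and node-contributions alone reproduce exactly the monoid $Q_1^\vee$ defined just before the proposition. A close second is the local computation at the splitting nodes in part (1), where one chases \eqref{butterfly} and \eqref{node-relation} to confirm that the chart $\oF_{S,e}\oplus\NN$ together with $f_1$ records contact order precisely $w_e$.
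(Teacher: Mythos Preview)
Your proof is correct and follows essentially the same approach as the paper's: local chart analysis for log smoothness of $\pi$, the butterfly computation for the contact order $w_e$ at splitting nodes, and identification of the basic monoid with $Q_1$ (resp.\ $Q_V$) via the tropical description of \eqref{eq-basic}. The only notable difference is in the final pullback claim of (2): you invoke the universal property of the minimal log structure, whereas the paper verifies directly the pushout identity $\shM_{C_1}=\pi^*\shF_S\oplus_{\pi^*\shM^1_S}\shM^1_{C_1}$ (with $\shM^1_{C_1}$ built exactly as $\shF_{C_1}$ was but over $\shM^1_S$); these are two sides of the same coin, and your additional remarks on stability and on why $f_1$ lands in $\shM_{X_1}\subset\shM_X|_{X_1}$ fill in details the paper leaves implicit.
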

\begin{proof} 
The smoothness of $\pi$ follows from the construction of $\shF_{C_1}$ as locally it has precisely the shape as in the classification of log smooth curves \cite[\S1.8]{F.Kato}, \cite[Theorem 1.1]{GS}.
For (1), it remains to study the contact orders. The definition was given just before Definition~\ref{def-beta}.
At a splitting node $e$ in $f:C\ra X$, we identify the map $P_e\ra Q\oplus_{\NN} \NN^2$ in \eqref{butterfly} with the map of stalks of the characteristics at the node $\varphi:\NN^2\ra \oM_{S,s}\oplus_\NN \NN^2$. The part of $\varphi$ that maps to the second summand is the map $\NN^2\ra\NN^2$ that is given by multiplication by $w_e$ which follows from the definition of $u_e$. 
On the other hand, by the preceding construction of $\shF_{C_1,x}$ at a splitting node $x$ as the subsheaf $\oF_{S,x}\oplus\NN\subset \oM_{S,x}\oplus_\NN \NN^2$, restricting $\varphi$ to the second $\NN$-summand yields $\varphi_2\colon \NN\ra \shF_{S,s}\oplus\NN$ and the composition with the projection to the second $\NN$-summand is multiplication by $w_e$, so the weight of the edge $e$ of $(\Gamma_\rho)_1$ gives the contact order as claimed.

For (2), the existence of the sheaf $\oQ_1$ is Lemma~\ref{lem-sheaf-facets}. Note that the labelling of edges of $\Gamma_\rho$ together with the map $r$ makes vertices uniquely identify-able as each vertex is adjacent to at least one edge, so we don't need to additionally enumerate vertices and then consequently \eqref{decomposeQ1} gives sheaves $\oQ_V$ as claimed. That the $Q_V$ are the basic monoids (and then consequently $Q_1$ is also) follows directly from Definition~\ref{def-basic} and Equation \eqref{eq-basic}.
Finally, the statement that the inclusion $\shM_S^V, \shM_S^1\subset \shF_S$ gives the pullback from the basic log structure can be checked directly. Indeed, $\shM_{C_1}=\pi^*\shF_{S}\oplus_{\pi^*\shM^1_{S}}\shM^1_{C_1}$, where the definition of $\shM^1_{C_1}$ is as that of $\shM_{C_1}$ above, only with $\shM^1_{S}$ in place of $\shF_{S}$ everywhere. Similarly, one defines $\shM^V_{C_V}$ and has then $\shM_{C_1}|_{C_V}=\pi|_{C_V}^*\shF_{S}\oplus_{\pi|_{C_V}^*\shM^V_{S}}\shM^V_{C_1}$ as desired.
\end{proof}

A similar version of Proposition~\ref{prop-split} holds for $C_2$ in place of $C_1$, so we finished the splitting procedure that turns a basic stable log map $f:C/S\ra X/\bk$ into a pair of basic stable log maps $f_1:C_1/S\ra X_1/\ubk$ and $f_2:C_2/S\ra X_2/\ubk$ and then we can split further into $C_V$ over vertices $V$ of $\Gamma_\rho$ corresponding to components of $C_1,C_2$. We finished constructing the map $\phi_\tGamma$ in \eqref{diag-moduli-spaces}.

Note that, by construction, there is a map from the original stable map log structure to the split one in \eqref{C1-square}, i.e., we have a commutative diagram
\begin{equation} 
\vcenter{
\label{split-commutes}
\xymatrix{
(\underline C_V,\shM_C|_{\underline C_V})/S \ar[d]\ar[r] & (\uX_{r(V)},\shM_X|_{\uX_{r(V)}})/\bk \ar[d]\\
C_V/(\uS,\shM^V_{S}) \ar[r] & X_{r(V)}/\ubk.
}}
\end{equation}


\section{Gluing stable log maps}\label{gluing}
The purpose of this section is to reverse the process of the last section. 
We assume to be given $\tilde\Gamma\in \tOmega (g, n, \beta)$ and an object in $\bigodot_V \mathscr{M}_V$, see \eqref{diag-moduli-spaces}. I.e., we have two basic stable log maps $f_1:C_1/S\ra X_1/\ubk$ and $f_2:C_2/S\ra X_2/\ubk$ with contact order data $\tilde\Gamma_1$ and $\tilde\Gamma_2$ respectively and the underlying curves with matching contact orders, i.e., $w_{e^1_i}=w_{e^2_i}$ for $e^j_i\in E(\tilde\Gamma_j)$ the $i$th edge for $j=1,2$ and also
$f_1(e^1_i)=f_2(e^2_i)$ for each $i$, so we have the diagram \eqref{glue-diag}.
For a point $s\in S$, denote by $C_{1,s}, C_{2,s}$ the curves above $s$. 
We obtain $Q_i:=\oM^i_{S,s}$ and the interpretation of its dual $Q_i^\vee$ as a parameter space of tropical curves 
$h_1:\Gamma_{C_{1,s}}\ra [0,\infty)$ and $h_2:\Gamma_{C_{2,s}}\ra (-\infty,0]$ given in \S\ref{splitting} respectively.
Plugging $\Gamma_{C_{1,s}}$ and $\Gamma_{C_{2,s}}$ together by gluing half-edges to compact edges along matching $e^1_i\leftrightarrow e^2_i$ yields $\tGamma_C$. We give the resulting new compact edges the weights $w_{e_i}=w_{e^1_i}=w_{e^2_i}$.
The natural map $r:\{\hbox{vertices of }\tGamma_C\}\ra \{1,2\}$ is given by whether a component of the curve is in $C_1$ or $C_2$.
We hence obtain a graph $\tGamma_C$ fully decorated with $w_e,\beta_V,n_V,g_V$.
Collapsing $\tGamma_C$ to a bipartite graph $\tGamma_\rho$ using $r$ and inferring the decorations on $\tGamma_\rho$ from $\tGamma_C$, we find that $\tGamma_\rho$ is an element of $\tOmega (g, n, \beta)$ and in fact $\tGamma_\rho=\tilde\Gamma$. We abuse notation when writing $\tGamma_\rho$ at this point because we have not yet defined $\rho$ that yields this graph.

Our next step is to define the monoid $Q^\vee_{\op{basic},s}$ together with an element $\rho$ so that $\tGamma_\rho$ is the graph associated to $\rho$.
As in the proof of surjectivity of \eqref{project-to-splitting}, we can lift any pair of tropical curves $h_1,h_2$ to a tropical curve $h:\tGamma_C\ra[0,l]$ for some $l\gg 0$.
We define $Q^\vee_{\op{basic},s}$ to be the parameter space of integral tropical curves $h:\tGamma_C\ra[0,l]$ with varying $l\ge 0$ and with the constraints
(T1) to (T5). Here, \emph{integral} simply means that $l$, the $V^\vee_\eta$ and the $l_e$ are all integral.
We define $q_e,\one\in (Q_{\op{basic},s}^\vee)^\vee=:Q_{\op{basic},s}$ respectively as the maps $Q^\vee_{\op{basic},s}\ra\NN$ given by $((V_\eta)_\eta,(l_{e'})_{e'})\mapsto l_{e'}$,  $(h\colon \Gamma_{C_s}\ra[0,l])\mapsto l$.
The monoid $Q_{\op{basic},s}^\vee$ contains a particular element $\rho$ that is given by the tropical curve $h:\Gamma_{C_s}\ra[0,l]$ where
\begin{equation}
\label{lcm}
l=\op{lcm}(w_e:e\hbox{ \small is a splitting node})
\end{equation}
and the $(r=1)$-vertices of $\Gamma_{C_s}$ map to $0$ and the $(r=2)$-vertices map to $l$.
With the same reasoning as in Lemma~\ref{split-facet}, we find that $\rho$ is contained in $Q_{\op{basic},s}^\vee[1]$ and the associated facet $F_\rho=\rho^\perp$ of $Q_{\op{basic},s}$
takes the form $F_\rho=Q_1\times Q_2$ where $Q_i^\vee$ is the parameter space of integral tropical curves that map $\Gamma_{C_{i,s}}$ to a ray as at the beginning of \S\ref{splitting}.
Under the construction in \S\ref{section-graphs-to-curves}, i.e., collapsing $(u_e=0)$- and $(l_e=0)$-edges, it is not hard to see that the tropical curve given by $\rho$ yields precisely the decorated bipartite graph $\tGamma_{\rho}$ that we produced from plugging together $\Gamma_{C_{1,s}}$ and $\Gamma_{C_{2,s}}$ in the above paragraph, except we forgot the ordering of the edges.

By construction and Proposition~\ref{prop-generization}, $\tGamma_{\rho}$ is independent of $s\in S$ and compatible with generization, meaning that for $\eta\in S$ with $s\in\bar\eta$, we have a collapsings $\Gamma_s\ra\Gamma_\eta\ra\tGamma_\rho$.

As the next step, we want to construct a diagram
\begin{equation}
\label{Ghost-square} 
\begin{CD} \oM_{C_s}  @<f^*<< \NN^2 \\
        @A{\pi^*}AA @AA{1\mapsto (1,1)}A \\
        Q_{\op{basic},s} @<{\one\mapsfrom 1}<< \NN
\end{CD}
\end{equation} 
of sheaves of monoids on $\underline C_s$ where all except the top left one are constant sheaves.
We are going to define $\oM_{C_s}$ as a subsheaf of 
$$\oM_{C_s}^{\op{pre}}:=\left(\bigoplus_{V\in \Gamma_{C_s}} i_{V,*}Q_{\op{basic},s}\right)\oplus\left(\bigoplus_{j\in n} \sigma_{j,*}\NN\right)$$
where $i_V:C_V\ra C_s$ is the inclusion of a component. The projection of the image of $f^*$ and $\pi^*$ to the second summand $\left(\bigoplus_{j\in n} \sigma_{j,*}\NN\right)$ will be trivial.
Away from the nodes, we set $\oM_{C_s}=\oM_{C_s}^{\op{pre}}$ and at a node $e$ with adjacent components $V_1,V_2$, the stalk of $\oM_{C_s}$ is defined by requiring that its projection to $i_{V_1,*}Q_{\op{basic},s}\times i_{V_2,*}Q_{\op{basic},s}= Q_{\op{basic},s}\times Q_{\op{basic},s}$ agrees with
$$\{(a,b)\in Q_{\op{basic},s}\times Q_{\op{basic},s}\,\mid\, b=kq_e+a\hbox{ for some }k\in\ZZ \}.$$
By the universal property of the pushout, the latter is canonically isomorphic to 
$$Q_{\op{basic},s}\oplus_{q_e\mapsfrom 1,\NN,1\mapsto (1,1)} \NN^2$$ 
and so we naturally obtain the commutative square \eqref{Ghost-square} for the stalk at each node $e$.
We globalize the map $f^*$ by taking it to be $(V_\eta)_\eta$ (see \eqref{tropical-sequence-0}).
The map $\pi^*$ in \eqref{Ghost-square} globalizes by mapping diagonally into the first summand of $\oM_{C_s}^{\op{pre}}$.

\begin{lemma} \label{lemma-ghost-glued}
The map $f^*$ factors through $f^*\oM_X$ and the diagram \eqref{Ghost-square} is well-defined and commutes.
\end{lemma}
\begin{proof} 
In view of \eqref{decomposeMbar}, for the first claim, we need to show that $V_\eta$ is trivial on the $i$th summand of $\NN\oplus\NN$ whenever $f$ maps the generic point of a component $\eta$ away from $X_i$. 
Mapping $\eta$ away from $X_i$ means $V_\eta$ is $(3-i)$-rigid and by definition the integral tropical curves parametrized by $Q^\vee_{\op{basic},s}$ satisfy the rigidity constraint, so $f^*$ factors through $f^*\oM_X$ as claimed.
The sheaf $\oM_{C_s}$ is well-defined.
That $f^*$ maps into $\oM_{C_s}$ follows from \eqref{node-relation}: indeed, if $V_1,V_2$ are connected by an edge $e$ then
$h(V_1)-h(V_2)=w_el_e$ holds for every integral tropical curve $h\in Q^\vee_{\op{basic},s}$ and $q_e:Q_{\op{basic},s}^\vee\ra\NN$ is the map that returns $l_e$, so $V_1-V_2$ is an integral multiple of $q_e$ as required.
Finally, we check commutativity of the diagram \eqref{Ghost-square} at stalks. At a node, the commutativity follows by the construction of the diagram as a pushout.
At a stalk of $C_s$ which is \emph{not} a node, the composition of $\pi^*$ with the projection to the first summand of $\oM_{C_s}^{\op{pre}}$ is an isomorphism and the dual of the diagram is commutative by the equality $l= \rho( \one ) = \rho(V_\eta(1, 1))$ that holds for every vertex $V_\eta$, see \eqref{vertices-map} and the line after the equation.
\end{proof}

The remainder of this section is about lifting the diagram \eqref{Ghost-square} to actual maps of log structures for a basic stable log map $C/S\ra X/\bk$.
First note that taking $\shF_S=\shM^1_S\oplus_{\shO_S^\times}\shM^2_S$ as a log structure on $S$ and on $C_1,C_2$ the pullbacks $\shM_{C_i}\oplus_{\pi^*\shM^i_{S}}\pi^*\shF_S$, we obtain the diagram \eqref{C1-square} for $i=1,2$.

Since $\underline C/\uS$ is a stable curve, as such it receives a basic log structure from $\mathscr{C}_{g, n}\ra \mathscr{M}_{g, n}$, 
the Artin stack of prestable curves $\mathscr{M}_{g, n}$ with its universal curve $\mathscr{C}_{g, n}$, cf.~\cite[Appendix A]{GS}, \cite[p. 227ff.]{F.Kato}. 
We denote this log structure by $\shM^{C/S}_C$ on $C$ and $\shM^{C/S}_S$ on $S$ and have the induced map
\begin{equation} \label{basic-curve-structure}
\pi^*\shM^{C/S}_S\ra \shM^{C/S}_C.
\end{equation}
For a point $s\in S$ and $C_s$ the fibre over it, we have $\oM^{C/S}_{S,s}=\NN^{E(\Gamma_{C_s})}$ and this is compatible with \eqref{eq-basic} (by having $P_\eta=0$ for all $\eta$).

We arrive at the following maps of sheaves on $S$
\begin{equation} 
\label{generation-by-actual-log-structures} 
\begin{CD} \shM^{C/S}_{S}  @>>> \oM_{S} @<{\one\mapsfrom 1}<< \NN\\
        && @AAA \\
        &&\shF_S
\end{CD}
\end{equation} 
where the top left map sends a generator of the $\NN$-copy indexed by a node $e$ to $q_e$.
\begin{lemma} 
\label{images-generate-MS}
The images of the left and bottom map going into $\oM_{S}$ in \eqref{generation-by-actual-log-structures} generate $\oM_{S}^\gp$.
\end{lemma}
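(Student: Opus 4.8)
The plan is to reduce the statement to a stalk-wise computation at an arbitrary point $s\in S$, where everything becomes a statement about the monoid $Q_{\op{basic},s}$ and its facet $F_\rho$. First I would recall that $\oM_{S,s}=Q_{\op{basic},s}$ by the very definition of the basic log structure \eqref{eq-basic}, that $\oF_{S,s}=F_\rho=\rho^\perp$ by Proposition~\ref{prop-generization}, and that $\oM^{C/S}_{S,s}=\NN^{E(\Gamma_{C_s})}$ with the map to $\oM_{S,s}$ sending the $e$-th generator to $q_e$. So the claim becomes: the elements $\{q_e : e\in E(\Gamma_{C_s})\}$ together with $\one$ and the subgroup $F_\rho^\gp$ generate $Q_{\op{basic},s}^\gp$.

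The key observation is the description \eqref{basic-generation-surjection}: the natural map $\left(\bigoplus_\eta P_\eta\right)\oplus\left(\bigoplus_e\NN\right)\to Q_{\op{basic}}$ is surjective up to saturation, where the $e$-summand maps $1\mapsto q_e$ and the $\eta$-summand maps via $V_\eta:P_\eta\to Q_{\op{basic}}$. Since $Q_{\op{basic}}^\gp$ is generated (as a group, not just up to saturation — passing to groups removes the saturation issue) by the $q_e$ and the images of all $V_\eta(P_\eta)$, it suffices to show that each $V_\eta(P_\eta)$ lies in the subgroup generated by $F_\rho^\gp$, the $q_e$, and $\one$. Now $P_\eta$ is one of $\NN e_1$, $\NN e_2$, or $\NN^2$ by the trichotomy after Lemma~\ref{decompose-Qdual}. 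When $\eta$ is rigid, $P_\eta$ is rank one and $V_\eta(e_i)$ is determined: in fact $V_\eta(e_1)+V_\eta(e_2)=\one$ always (this is \eqref{tropical-sequence-0}), so for a $1$-rigid vertex $V_\eta(e_1)=\one$ and $V_\eta(e_2)=0$, hence $V_\eta(P_\eta)\subset\ZZ\one$; similarly for $2$-rigid vertices. So the only content is at non-rigid $\eta$, where $V_\eta(\NN^2)$ is generated by $V_\eta(e_1),V_\eta(e_2)$ with $V_\eta(e_1)+V_\eta(e_2)=\one$, so it suffices to handle $V_\eta(e_2)=V_\eta^\vee(1)$ for each $\eta$.

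To finish, I would argue that the differences $V_{\eta_2}^\vee(1)-V_{\eta_1}^\vee(1)$ along edges are handled by the $q_e$'s: by \eqref{vertex-difference} applied functorially (i.e., as an identity of elements of $Q_{\op{basic}}$, reading $V_\eta^\vee(1)$ as the element $V_\eta(e_2)$), we have $V_{\eta_2}(e_2)-V_{\eta_1}(e_2)=w_e\, q_e$ for an edge $e$ between $\eta_1$ and $\eta_2$ (with appropriate orientation). Since $\Gamma_{C_s}$ is connected, fixing one base vertex $\eta_0$, every $V_\eta(e_2)$ differs from $V_{\eta_0}(e_2)$ by a $\ZZ$-combination of the $w_e q_e$, hence lies in $V_{\eta_0}(e_2)+\sum_e\ZZ q_e$. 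Finally $\eta_0$ may be taken to be a rigid vertex — one exists because by the balancing argument used in the proof of Lemma~\ref{split-facet}, each vertex of $\Gamma_\rho$ has $\tau_V\neq 0$, forcing some component mapping away from $D$, i.e. a rigid component — and for that $\eta_0$ we saw $V_{\eta_0}(e_2)\in\ZZ\one$. Hence every $V_\eta(e_2)\in \ZZ\one+\sum_e\ZZ q_e$, and therefore every $V_\eta(P_\eta)$ lies in $\ZZ\one+\sum_e\ZZ q_e$, which is contained in the subgroup generated by the images of the left and bottom maps in \eqref{generation-by-actual-log-structures}. Combined with the surjectivity-up-to-saturation of \eqref{basic-generation-surjection} (which on Grothendieck groups becomes genuine surjectivity), this shows the two images generate $\oM_S^\gp$ stalk-wise, hence as sheaves.

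The step I expect to be the main obstacle is the bookkeeping in the last paragraph: making precise that \eqref{vertex-difference}, which is literally stated as an identity of vertex positions in $[0,l]$, can be read as an identity $V_{\eta_2}(e_2)-V_{\eta_1}(e_2)=w_e q_e$ in $Q_{\op{basic}}^\gp$ itself, and carefully tracking orientations so the connectedness/telescoping argument goes through. One must also be slightly careful that it is enough to generate $\oM_S^\gp$ on stalks — but this is fine since a map of constructible sheaves of groups that is surjective on all stalks is surjective, and $\oM_S$, $\shF_S$, $\shM^{C/S}_S$ are all constructible.
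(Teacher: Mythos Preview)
Your restatement of the claim is incorrect: you write ``the claim becomes: the elements $\{q_e\}$ together with $\one$ and the subgroup $F_\rho^\gp$ generate $Q_{\op{basic},s}^\gp$''. But $\one$ is \emph{not} in the image of either the left map (which hits the $q_e$) or the bottom map (which is $\oF_S=F_\rho=\rho^\perp$, and $\rho(\one)=l>0$, so $\one\notin F_\rho$). The lemma asks that $\{q_e\}\cup F_\rho^\gp$ generate. Your argument, as written, establishes that $\{q_e,\one\}$ generate $Q_{\op{basic}}^\gp$; this is a different and strictly easier statement, and you never close the gap back to the actual claim. To do so you would need to show $\one\in F_\rho^\gp+\sum_e\ZZ q_e$, i.e.\ find integers $a_e$ with $\sum_e a_e l_e=l$, equivalently $\gcd_e(l_e)\mid l$; in fact $\gcd_e(l_e)=1$ because $l=\lcm_e(w_e)$ and $l_e=l/w_e$, so a common divisor $k$ of all $l_e$ would make $\rho/k$ integral, contradicting primitivity of $\rho$. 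This $\gcd$ computation is exactly the content of the paper's proof, which you have bypassed.

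There is also a smaller slip: ``surjective up to saturation'' does not in general become genuine surjectivity after passing to Grothendieck groups (e.g.\ $\NN\xrightarrow{\cdot 2}\NN$). In the present case the groupified map \eqref{basic-generation-surjection} \emph{is} surjective, but for a different reason: $Q^\vee_{\op{basic}}$ is a \emph{saturated} submonoid of $\bigoplus_\eta P_\eta^\vee\oplus\bigoplus_e\NN$, so the cokernel of the inclusion on groups is free, and dualizing over $\ZZ$ then gives surjectivity.

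For comparison, the paper's proof is much shorter and avoids your connectedness/rigidity detour entirely: since $\oF_S=\rho^\perp$ is already a corank-one subgroup of $\oM_S^\gp$ and $\rho$ is primitive, it suffices to exhibit a single element $q$ with $\rho(q)=1$ in the span of the $q_e$; the identity $\gcd_e(l_e)=1$ provides it.
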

\begin{proof} 
The image contains $\oF_S$ which is co-rank one. We have $\oF_S=\rho^\perp$ and $\rho$ is primitive, so it suffices that we can find an element $q$ in the linear combination of the images that has $\rho(q)=1$.
We claim such an element can be obtained as a linear combination of the $q_e$ which will be clear once we prove
$$\gcd\{ l_e\,\mid\,e\in E(\Gamma_\rho) \}=1$$
since $l_e=\rho(q_e)$. Assume $k|l_e$ for all $e$. Since $w_el_e=l$, we find $k|l$ and $k>1$ would contradict primitivity of $\rho$ since then $\frac1k \rho$ would be integral, so indeed $\gcd=1$ and we are done.
\end{proof}
Our next goal is to lift $\oM_{S}$ to a log structure $\shM_{S}$.
Note that $C_1/S$ and $C_2/S$ are stable curves, so they induce maps $\shM^{C_i/S}_S\ra\shM^i_S$ that we sum to have maps
\begin{equation} 
\label{complement square}
 \shM^{C/S}_{S} \leftarrow \shM^{C_1/S}_S\oplus_{\shO_S^\times} \shM^{C_2/S}_S \rightarrow \shF_S
\end{equation} 
that fit in to fill the empty bottom left corner of \eqref{generation-by-actual-log-structures} giving a commutative square with the maps to $\oM_S$.
We let $\widehat\shM_S$ be the pushout of \eqref{complement square}.
Since all terms in \eqref{complement square} are log structures, it is not hard to see that 
$\widehat\shM_S$ with the natural induced map to $\shO_X$ is also a log structure.
Note also that $\overline{\widehat\shM_S} = \NN^r\oplus\oF_S$ because every stalk $\oM^{C/S}_{S,x}$  of $\oM^{C/S}_{S}$ decomposes as $\oM^{C/S}_{S,x}=\NN^r\oplus \NN^s$ for some $s$ and the map from
$\overline{\shM^{C_1/S}_{S,x}\oplus_{\shO_{S,x}^\times} \shM^{C_2/S}_{S,x}}=\NN^s$ to $\oM^{C/S}_{S,x}$ is the injection $\{0\}\times\NN^s \hra\NN^r\oplus \NN^s$.

We use $\widehat\shM_C:=\pi^*\widehat\shM_S\oplus_{\pi^*\shM^{C/S}_S} \shM^{C/S}_C$ and so the map 
$$ \pi^*\widehat\shM_S\ra \widehat\shM_C$$
 makes $\pi$ log-smooth because it is just the pullback of \eqref{basic-curve-structure}.

However $\widehat\shM_S$ is too large for what we want and the remainder of this section is about producing $\shM_S$ as a suitable quotient of $\widehat\shM_S$.
Note that $\overline{\widehat\shM_S}\ra \oM_S$ is surjective by Lemma~\ref{images-generate-MS} but not an isomorphism if $\tGamma_\rho$ has more than one edge. 
This is because ${\oM^\vee_S}$ parametrizes integral tropical curves with a map to an interval which requires a relation between the edge lengths, see \eqref{vertex-difference}.
This condition is absent in $(\overline{\widehat\shM_S})^\vee$, indeed 
\begin{equation}
\label{hatMS}
\overline{\widehat\shM_S}= \oM^1_S\oplus \oM^2_S\oplus \NN^r = \oF_S\oplus \NN^r
\end{equation}
where $r$ is the number of edges of $\tGamma_\rho$.
We are going to define a global section of $\overline{\widehat\shM_S}$ as a sum
\begin{equation}
\label{split-1-in-three}
\one_e:= V_1 + w_eq_e + V_2
\end{equation}
where $q_e$ is the generator of the $\NN$-summand in \eqref{hatMS} that corresponds to the node $e$. 
This is by slight abuse of notation as the projection of $q_e$ to $\oM_S$ also has this name.
Furthermore, for $s\in S$,  $V_i\in {\oM_{S,s}^i}$ is defined by how it pairs with a tropical curve $h:\Gamma_{C_s^1}\ra[0,\infty)$ or $h:\Gamma_{C_s^2}\ra(-\infty,0]$ parametrized by $(\oM_{S,s}^1)^\vee,(\oM_{S,s}^2)^\vee$ respectively via Lemma~\ref{trop-moduli-spaces}.
We set $V_i:(\oM_{S,s}^i)^\vee\ra\NN$ to be the distance from $0$ of the vertex $V_i$ of $e$.
Note that under the projection $\overline{\widehat\shM_S}\ra\oM_S$ each $\one_e$ maps to $\one$. 
Indeed, it becomes the operator that associates to a tropical curve $h:\Gamma_{C}\ra[0,l]$ the length $l$ since $h(V_2)-h(V_1)=w_el_e$ by \eqref{vertex-difference}, see also Lemma~\ref{lem-identity-one}.

\begin{lemma} 
\label{sequence-descend-to-MS}
For $E(\tGamma_\rho)=\{e_1,...,e_r\}$, the lattice $K:=\ZZ(\one_{e_2}-\one_{e_1})\oplus...\oplus  \ZZ(\one_{e_r}-\one_{e_1})$ injects in $\Gamma(S,\overline{\widehat\shM_S}^\gp)$, let $K^\sat$ denote its saturation.
We have a split exact sequence
$$0 \ra K^\sat \ra \overline{\widehat\shM_S}^\gp\ra \oM_S^\gp \ra 0.$$
\end{lemma}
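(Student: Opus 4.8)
\emph{Proof plan.} The plan is to reduce everything to stalks at a point $s\in S$ — the global assertions then follow from the generization compatibilities of \S\ref{sec-specialization} — and to identify the kernel of the map $q\colon\overline{\widehat\shM_S}^\gp\to\oM_S^\gp$ in the statement with $K^\sat$ not by describing it explicitly but through a rank count. Throughout I would use the identification \eqref{hatMS}, $\overline{\widehat\shM_S}=\oM^1_S\oplus\oM^2_S\oplus\NN^r=\oF_S\oplus\NN^r$, whose $\NN^r$-summand is generated by the node elements $q_{e_1},\dots,q_{e_r}$, together with Lemma~\ref{split-facet}, which on stalks says that $\oF_S=\rho^\perp=Q_1\times Q_2$ is a corank-one facet of $\oM_S=Q_{\op{basic},s}$.

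The first step is the injectivity of $K$. By \eqref{split-1-in-three} the $\NN^r$-component of $\one_{e_i}$ is $w_{e_i}$ times the $i$th standard generator, so for $i=2,\dots,r$ the $\NN^r$-component of $\one_{e_i}-\one_{e_1}$ is $w_{e_i}e_i-w_{e_1}e_1\in\ZZ^r$; since every weight $w_{e_i}$ is positive, reading off the $e_i$-coordinate shows these $r-1$ vectors are $\ZZ$-linearly independent, hence so are the $\one_{e_i}-\one_{e_1}$ in $\overline{\widehat\shM_S}^\gp$. This yields the embedding $K\cong\ZZ^{r-1}\hookrightarrow\Gamma(S,\overline{\widehat\shM_S}^\gp)$, and since each $\one_{e_i}$ maps to $\one$ under $q$ (as noted after \eqref{split-1-in-three}), we get $K\subseteq\ker q$.

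Next I would prove that $q$ is surjective and count ranks. For surjectivity, the bottom map of \eqref{generation-by-actual-log-structures} factors through $\overline{\widehat\shM_S}$, so $\im q\supseteq\oF_S$, and $\im q$ contains $q_{e_i}$ for every splitting node; for a non-splitting node $e$ one has $\rho(q_e)=l_e=0$, i.e. $q_e\in\rho^\perp\cap\oM_S=\oF_S\subseteq\im q$, so $\im q$ contains $\oF_S$ together with $q_e$ for all nodes $e$, and these generate $\oM_S^\gp$ by Lemma~\ref{images-generate-MS}. For the ranks, \eqref{hatMS} gives $\rk\overline{\widehat\shM_S}^\gp=\rk\oF_S^\gp+r$, while $\rk\oM_S^\gp=\rk\oF_S^\gp+1$ because $\oF_S=\rho^\perp$ is a corank-one facet; as $q$ is onto, $\rk\ker q=r-1=\rk K$. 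Finally, $\oM_S^\gp$ is torsion-free (its stalks are lattices), so $\ker q$ is saturated in $\overline{\widehat\shM_S}^\gp$; combined with $K\subseteq\ker q$ and the equality of ranks this forces $K^\sat=\ker q$. Hence $0\to K^\sat\to\overline{\widehat\shM_S}^\gp\to\oM_S^\gp\to0$ is exact, and it splits because the quotient $\oM_S^\gp$ is (stalkwise) a finitely generated free abelian group.

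I do not foresee a genuine obstacle: once \eqref{hatMS}, Lemma~\ref{split-facet} and Lemma~\ref{images-generate-MS} are in hand, linear independence drops out of the $\NN^r$-coordinates and $K^\sat=\ker q$ is forced by the rank count, so the one point needing care is the bookkeeping in the surjectivity step — recognizing that the node elements $q_e$ for non-splitting $e$ are already absorbed into $\oF_S$, so that no extra generators are needed for them.
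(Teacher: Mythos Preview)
Your proposal is correct and follows essentially the same route as the paper: $K\subseteq\ker q$ since each $\one_e$ maps to $\one$, surjectivity via Lemma~\ref{images-generate-MS}, and a rank count to pin down the kernel. Your argument for $K^\sat=\ker q$ (equal ranks, both saturated because $\oM_S^\gp$ is torsion-free) is actually a bit more direct than the paper's, which phrases this as ``exact over $\QQ$'' and then uses the splitting to descend to $\ZZ$.

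The one place where the paper does slightly more than you is the splitting itself. You appeal to stalkwise freeness of $\oM_S^\gp$, which is fine for a single stalk but does not by itself produce a splitting of \emph{sheaves}. The paper instead writes down an explicit global section: the element $q$ from the proof of Lemma~\ref{images-generate-MS} (a $\ZZ$-combination of the splitting-node generators with $\rho(q)=1$) lifts to $\ZZ^r\subset\overline{\widehat\shM_S}^\gp$, and together with the identity on $\oF_S^\gp$ this gives $\oM_S^\gp=\oF_S^\gp\oplus\ZZ q\hookrightarrow\oF_S^\gp\oplus\ZZ^r$ as sheaves. Since you already have $q$ in hand from your surjectivity step, this costs nothing extra and closes the sheaf-level gap.
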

\begin{proof} 
That $K^\sat$ injects in the middle term is clear and also that it lies in the kernel to the right by what we just said about all $\one_e$ mapping to $\one$ and because $\oM_S^\gp$ is torsion-free. 
Surjectivity on the right is Lemma~\ref{images-generate-MS}. By checking ranks, it is also straightforward to see that the sequence is exact over $\QQ$ which completes the proof up to finding a splitting of the exact sequence. 
Indeed, the proof of Lemma~\ref{images-generate-MS} provide an element $q$ as a linear combination of $q_e$ and we may interpret this linear combination in $\overline{\widehat\shM_S}^\gp$ thus together with $\oF^\gp_S$ producing an injection 
$\overline{\shM_S}^\gp \ra \overline{\widehat\shM_S}^\gp$ that is an inverse to the reversely directed surjection.
\end{proof}
An example where $K\neq K^\sat$ is given by the situation of two edges with the same vertices but weights not coprime.
Define $\shL_{\one_e}$ to be the $\shO^\times_S$-torsor that is the inverse image of $\one_{e}$ in $\widehat\shM_S$.

\begin{lemma} \label{lemma-section-exists}
$\shL_{\one_e}\cong\shO_S^\times$ for all edges $e$ of $\tGamma_\rho$.
\end{lemma}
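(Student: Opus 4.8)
The plan is to exhibit an explicit global section of $\widehat\shM_S$ lying over $\one_e\in\Gamma(S,\overline{\widehat\shM_S})$, which trivializes the torsor $\shL_{\one_e}$. The key observation is that $\one_e$ was defined in \eqref{split-1-in-three} as a sum $V_1+w_eq_e+V_2$ of contributions coming from $\shM^1_S$, the curve-structure summand $\NN^r$ of $\widehat\shM_S$, and $\shM^2_S$; since $\widehat\shM_S$ was built (via the pushout \eqref{complement square} and the fibre product $\widehat\shM_S=\widehat\shM^\gp_S\times_{\oM_S^\gp}\oM_S$) precisely so that these three pieces lift, I would lift each summand individually and add.

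Concretely, first I would recall that $\shM^{C/S}_S$, the basic log structure of the stable curve $\underline C/\uS$, has a canonical global section over the generator $q_e\in\oM^{C/S}_S$ indexed by the node $e$: this is the standard \emph{smoothing parameter} of the node, the section of the log structure on the stack $\fM$ of pre-stable curves associated to the boundary divisor where $e$ persists, pulled back along $f_S$. Hence $w_e q_e$ lifts canonically to a section of $\shM^{C_1/S}_S\oplus_{\shO_S^\times}\shM^{C_2/S}_S$, and thus of $\widehat\shM_S$. Next, for the summand $V_1\in\oM^1_S$: by Proposition~\ref{prop-split}(2), $\shM^1_S=\shF_S\times_{\oF_S}\oQ_1$ is the basic log structure of the split map $C_1/S\ra X_1/\ubk$, and $V_1$ is (by its definition, the distance of the endpoint of $e$ from $0$) the image of $\one=V_\eta(1,1)$-type data; more usefully, $V_1$ is the pullback under $f_1^\flat$ of a generator of $\oM_{X_1}$ along the relevant marking/component, so it lifts to the section $f_1^\flat(\text{that generator})$ of $\shM^1_S$. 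The same applies to $V_2$ from $\shM^2_S$. Adding the three lifts inside $\widehat\shM_S$ produces a global section over $\one_e$, so $\shL_{\one_e}\cong\shO_S^\times$.

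I would also want to double-check compatibility: the three pieces must genuinely be summands (not merely quotients) of $\overline{\widehat\shM_S}$, which is exactly the content of \eqref{hatMS}, $\overline{\widehat\shM_S}=\oM^1_S\oplus\oM^2_S\oplus\NN^r=\oF_S\oplus\NN^r$, so their lifts in $\widehat\shM_S$ can be added without incurring any $\shO_S^\times$-ambiguity beyond the trivial one. It is worth remarking that this also re-proves, more concretely, the existence of the section $q$ used in the splitting in Lemma~\ref{sequence-descend-to-MS}.

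The main obstacle is the lift of the "endpoint-distance" summands $V_i$: one must verify that $V_i$, as an element of $\oM^i_S$, really does come from pulling back a \emph{canonical} section of the target log structure $\shM_{X_i}$ along $f_i^\flat$ — i.e., that the combinatorial definition of $V_i$ as a vertex-distance matches the log-geometric data $f_i^\flat$ of the split stable log map of Proposition~\ref{prop-split}, including at components mapping into $D$ where the target monoid is $\NN^2$ rather than $\NN$. Unwinding Lemma~\ref{lemma-ghost-glued} and the construction of $\shM^i_S$ shows this is forced, but it is the step that requires care; everything else (the smoothing parameter for $q_e$, additivity via \eqref{hatMS}) is formal.
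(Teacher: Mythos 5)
Your plan — produce an explicit global section over $\one_e$ by lifting each of the three summands $V_1$, $w_eq_e$, $V_2$ of \eqref{split-1-in-three} separately and adding — founders on the fact that \emph{none} of these three pieces admits a canonical lift; each lives in a generally non-trivial $\shO_S^\times$-torsor, and the lemma is precisely the statement that the \emph{product} of these three torsors is trivial, not that each factor is. Concretely, your claimed ``canonical smoothing parameter'' over $q_e$ does not exist: the torsor $\shL_{q_e}$ is the pullback of the conormal bundle of the boundary divisor (where $e$ persists) in the stack of pre-stable curves, which F.~Kato identifies with $\shL_{e^1}\otimes\shL_{e^2}$, the tensor product of the conormal line bundles of the two branches at the node --- a nontrivial line bundle in general. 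Likewise, the element $V_j\in\oM^j_S$ is not the pullback of the generator $\one_j$ of $\oM_{X_j}$ under $f_j^\flat$; at the marked point $e^j$ the pullback of $\one_j$ decomposes in the ghost sheaf as $(V_j,w_e)\in\oM^j_S\oplus\NN$, and this splitting is only a splitting of ghost sheaves, not of log structures. What one actually gets is an isomorphism of torsors $\shL_{V_j}\otimes\shL_{e^j}^{\otimes w_e}\cong\bigl(\ev^*_{e^j}\shO_{X_j}(-D)\bigr)^\times$, again nontrivial.

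The paper's proof is genuinely a cancellation argument at the level of torsors, not a construction of sections: substituting the above identifications into $\shL_{\one_e}\cong\shL_{V_1}\otimes\shL_{q_e}^{\otimes w_e}\otimes\shL_{V_2}$, the conormal factors $\shL_{e^j}^{\pm w_e}$ cancel pairwise, and the remaining factor $\ev^*_{e^1}\shO_{X_1}(-D)\otimes\ev^*_{e^2}\shO_{X_2}(-D)$ trivializes because $\shO_{X_1}(-D)|_D$ and $\shO_{X_2}(-D)|_D$ are dual to one another --- this is exactly the trivialization encoded by $\one_1+\one_2=\one$ being a global section of $\shM_X$. The step you dismissed as ``formal'' (identifying the torsors over $q_e$ and $V_j$) is where all the content lies; without it there is no argument. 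You should redo the proof by computing the three torsors explicitly as above and exhibiting the cancellation, rather than by searching for sections of the individual summands.
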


\begin{proof} 
Let $\shL_{V_1},\shL_{V_2}, \shL_{q_e}$ be the $\shO_S^\times$-torsors that are the inverse images of $V_1,V_2,q_e$ under ${\widehat\shM_S}\ra\overline{\widehat\shM_S}$. 
By \eqref{split-1-in-three}, we have $\shL_{V_1}\otimes\shL_{q_e}^{\otimes w_e}\otimes\shL_{V_2}\cong\shL_{\one_e}$ and want to show this is trivial.\\[4mm]
\begin{minipage}{0.65\textwidth} 
Since $e$ is a node over all points of $S$, we have a section $e:S\ra C$ and sections $e^j:S\ra C_j$ and by \cite[\S 2-\hbox{Global construction}]{F.Kato}, we find $\shL_{q_e}=\shL_{e^1}\otimes \shL_{e^2}$ where $\shL_{e^1},\shL_{e^2}$ is the $\shO^\times_S$-torsor given by the conormal bundle of the marked point $e^1,e^2$ in $C_1,C_2$ respectively.
\end{minipage}\qquad
\begin{minipage}{0.3\textwidth}
\[ \xymatrix{
C_j\ar^{f_j}[r]\ar^{\pi_j}[d] & X_j\\
S\ar@/^/^{e^j}[u] \ar_{\ev_{e^j}}[ur]
} \]
\end{minipage}\\[4mm]
The characteristic $\oM_{X_j}$ is globally generated by the generator $\one_j\in \NN$ that maps to $\oM_{C_j}$. 
The associated torsor, the inverse image in $\shM_{C_j}$, we call $\shL_{\one_j}$. The torsor $\shL_{\one_j}$ is isomorphic to the torsor of the line bundle $f_j^*\shO_{X_j}(-D)$ because the torsor of the $\NN$-generator on $X_j$ is the torsor of $\shO_{X_j}(-D)$ by Lemma~\ref{DF-log} and every map of torsors is an isomorphism.
Next note that $V_j\in \Gamma(S,\oM_S^j)$, i.e., both $V_1,V_2$ lie in the facet $\oF_S$ of $\oM_S$.
We have $\oM_{C_j}|_{e^j}=(\pi_j^*\oM^j_S)|_{e^j}\oplus\NN$ and $\one_j=(V_j,w_e)$ in this, hence
$$((\pi_j^*\shL_{V_j})|_{e^j})\otimes \shL_{e^j}^{w_e}=\shL_{\one_j}|_{e^j}.$$
Now $(e^j)^*\pi_j^*=\id^*_S$ and $(e^j)^*f_j^*=\ev^*_{e^j}$, hence 
$\shL_{V_j}\otimes \shL_{e^j}^{w_e}$ is isomorphic to the torsor of $\ev^*_{e^j}\shO_{X_j}(-D)$.
Now use that on $X$ we have $\one_1+\one_2=\one$ and to $\one$ is associated the trivial torsor since $\NN\ra\shM_X$ is a global section. 
This is just saying $\shO_{X_1}(-D)|_D$ is dual to $\shO_{X_2}(-D)|_D$. 
Putting it all together yields
$$ \shL_{V_1}\otimes \shL_{q_e}^{\otimes w_e}\otimes \shL_{V_2} \ \cong  \ 
\shL_{e^1}^{-w_e}\otimes \ev^*_{e^1}\shO^\times_{X_1}(-D)\otimes (\shL_{e^1}\otimes \shL_{e^2})^{w_e} \otimes \shL_{e^2}^{-w_e}\otimes \ev^*_{e^2}\shO^\times_{X_2}(-D)\  \cong\  \shO^\times_S.$$
\end{proof}
A consequence of Lemma~\ref{lemma-section-exists} is that the inverse image of every element of $K$ in $\widehat\shM^\gp_S$ is a trivial torsor and thus has sections. 
The next step is to produce a section $s_{\one_e}\in\Gamma(S,\shL_{\one_e})$ that is in fact uniquely determined by filling the dashed arrow in the diagram
\begin{equation} 
\vcenter{
\label{fill-dash-arrow-diagram}
\xymatrix{
 (\widehat\shM_{C})_\eta   & (f^*\shM_{X})_\eta\ar[l] \\
(\pi^*\widehat\shM_{S})_\eta \ar[u]   & \NN \ar[u]\ar@{-->}[l] 
} }
\end{equation}
by means of $1\mapsto s_{\one_e}$ in order to make it commutative at stalks at points $\eta$ in the image of the section $\uS\ra\uC$ that marks the node $e$.
Once this is done, we will take a quotient of $\widehat\shM_{S}$ that identifies all these sections, so that we get a map from $\NN$ into the quotient that is defined compatibly for all nodes.

Let $e$ be a gluing node (alias edge of $\tilde\Gamma_\rho$) of a curve $f:\uC/\uS\ra \uX$ glued from $C_1,C_2$ as in \eqref{glue-diag}. Let $V_1,V_2$ be the adjacent vertices of $\Gamma_C$ with $r(V_i)=i$. 
Let $\eta$ be a point in the node locus of $e$, then $\eta$ necessarily maps to $D$ under $f$, so $P_e:=\oM_{X,f(\eta)}=\NN^2$. 
The top horizontal arrow in \eqref{Ghost-square} at $e$ is the map $f_e:P_e\ra \widehat Q\oplus_{\NN} \NN^2$ in \eqref{butterfly} for $\widehat Q:=\overline{\widehat{M}}_{S,\pi(\eta)}$ and this is given by $f_e:\NN^2\stackrel{(V_{\eta_1},V_{\eta_2})}{\lra} \widehat Q\times \widehat Q$, see Lemma~\ref{lemma-ghost-glued}.
We can be more explicit by using \eqref{split-1-in-three} denoting as before $V_i:Q_i^\vee\ra\NN$ the map sending an integral tropical curve to the distance of $V_i$ from the origin. We find
\begin{equation} 
\label{eq-node-map-f-mbar}
f_e:\NN^2\ra \widehat Q\oplus_{\NN} \NN^2,\qquad (\lambda_1,\lambda_2)\mapsto  (\lambda_1V_1+\lambda_2V_2,(w_e\lambda_1,w_e\lambda_2))
\end{equation}
and indeed $f_e(1,1)=(V_1+V_2,(w_e,w_e))\sim (V_1+w_eq_e+V_2,(0,0))=\one_e$ by \eqref{split-1-in-three}.
As part of the datum of $X\ra\bk$, namely the map on log structures $\NN\ra\shM_X$, we are given as the image of $1$ under this, a lift of $(1,1)$ in $\shM_X$ and we look at its localization in $(\shM_X)_\eta$. 
Locally at $f(\eta)$, we can choose a chart of the log structure of $X$ given by 
\begin{equation} \label{eq-chosen-chart-X}
\NN^2\ra \shM_{X,f(\eta)}\ra \shO_{X,f(\eta)}\cong (B[z_1,z_2]/(z_1,z_2))_{(z_1,z_2)}, \quad e_i\mapsto z_i
\end{equation}
for $B=\shO_{D,f(\eta)}$ and $z_i$ a local equation of $D$ in $X_i$ at $f(\eta)$. 
We may assume this chart is compatible with the chart on $\bk$, i.e., $(1,1)$ maps to the given section of $\shM_X$ that comes from the chart of $\bk$.
Using \eqref{eq-chosen-chart-X} and the fact that we are given basic stable log maps $C_i\ra X_i$, we obtain maps
\begin{equation} 
\label{eq-Ci-to-be-glued}
\NN e_i\ra (\shM_{X}|_{X_i})_{f(\eta)} \ra \shM_{X_i,{f(\eta)}}\ra \shM_{C_i,\eta}
\end{equation}
for $i=1,2$ whose composition with $\shM_{C_i,\eta}\ra \oM_{C_i,\eta}=\shM^i_{S,\eta}\oplus\NN$ sends $e_i$ to $(V_i,w_e)$.
In particular, by \eqref{eq-node-map-f-mbar}, taking the sum of \eqref{eq-Ci-to-be-glued} over $i=1,2$ yields at the level of characteristic sheaves the desired top horizontal map of \eqref{lemma-ghost-glued} up to adding extra summands of $\NN$ to which we map trivially.
In order to form this sum also at the level of actual log structures, we need to lift torsors from $C_i$ to $C$. 
Concretely, let $\bar s_i$ denote the image of $e_i$ under \eqref{eq-Ci-to-be-glued}. We wish to lift $\bar s_i$ to a section $s_i\in(\widehat\shM_C)_\eta$ such that $s_{\one_e}:=s_1\cdot s_2\in (\pi^*\widehat\shM_S)_\eta$.
Once we choose a chart $\widehat Q\oplus_\NN\NN^2\ra (\widehat\shM_C)_\eta$ compatible with the chart \eqref{eq-chosen-chart-X}, the lifts $s_1,s_2$ are given uniquely by the following essential Lemma (cf. \cite[Proposition 7.1]{NS}, \cite{F.Kato},\cite{Li},\cite{Mo}).
\begin{lemma} \label{lemma-unique-extension-torsor}
For a local ring $(A,\fom)$, let $R$ denote the Henselization of $A[x,y]/(xy)$ in the ideal generated by $\fom$ and $x,y$. Let $R_x,R_y$ be the Henselization of $A[x]$,$A[y]$ in $\fom+(x),\fom+(y)$ respectively.
Given $\bar a\in R_x^\times$ and $\bar b\in R_y^\times$, 
there are unique $a,b\in R^\times$ that project to $\bar a,\bar b$ respectively and satisfy the property $ab\in A$.
\end{lemma}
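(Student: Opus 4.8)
The plan is to identify $R$ with a fibre product of the two branch rings over the base and then reduce the assertion to a one-line computation inside that fibre product. The underlying algebraic fact is that, writing $\tilde A$ for the Henselization of $A$ at $\fom$ (so $\tilde A=A$ whenever $A$ is Henselian, as may be arranged in the application), one has
\[
R \;=\; R_x\times_{\tilde A} R_y ,
\]
where $R\to R_x$ and $R\to R_y$ are the maps induced by $y\mapsto 0$ and $x\mapsto 0$; equivalently there is a short exact sequence $0\to R\to R_x\oplus R_y\to \tilde A\to 0$ whose right-hand map is the difference of the two reductions. I would deduce this from the elementary identity $A[x,y]/(xy)=A[x]\times_A A[y]$ (match the constant terms), i.e.\ from the exact sequence $0\to A[x,y]/(xy)\to A[x]\oplus A[y]\to A\to 0$, by localizing at $\fom+(x)+(y)$ and then applying $-\otimes_{(A[x,y]/(xy))}R$, using flatness of Henselization together with the fact that Henselization commutes with the quotients by $x$ and by $y$ (so $R/yR=R_x$, $R/xR=R_y$, $R/(x,y)R=\tilde A$). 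This is the relative form of the standard \'etale-local description of a node; compare \cite{F.Kato}, \cite{NS}. Since $R$ is local and $xR,yR$ lie in its maximal ideal, a pair $(u,v)\in R$ with $u\in R_x^\times$ (equivalently $v\in R_y^\times$) automatically lies in $R^\times$; I will use this to check unit-ness below.

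Having this, I would translate the problem. Let $\bar a_0,\bar b_0\in\tilde A$ denote the images of $\bar a$ and $\bar b$; they are units since $xR_x$ and $yR_y$ sit in the respective maximal ideals. An element $a\in R$ with image $\bar a$ in $R_x$ is a pair $(\bar a,a')$ with $a'\in R_y$ reducing to $\bar a_0$ modulo $y$, and an element $b\in R$ with image $\bar b$ in $R_y$ is a pair $(b',\bar b)$ with $b'\in R_x$ reducing to $\bar b_0$ modulo $x$. Then $ab=(\bar a\,b',\,a'\bar b)$, and the condition $ab\in\tilde A$, i.e.\ that $ab$ lies in the diagonal copy of $\tilde A$ inside $R$, is equivalent to $\bar a\,b'\in\tilde A\subseteq R_x$ and $a'\bar b\in\tilde A\subseteq R_y$ (these two elements then automatically coincide, each being its own image in $\tilde A$).

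Finally I would solve and prove uniqueness simultaneously. From $\bar a\in R_x^\times$ and $\bar a\,b'\in\tilde A$ we get $b'=\bar a^{-1}c$ with $c\in\tilde A$; reducing modulo $x$ and using $b'\equiv\bar b_0$ forces $c=\bar a_0\bar b_0$, so $b'=\bar a^{-1}\bar a_0\bar b_0$ is uniquely determined, and symmetrically $a'=\bar b^{-1}\bar a_0\bar b_0$. Conversely these values do satisfy the reduction constraints, and $\bar a_0\bar b_0\in\tilde A^\times$ makes $a'$ and $b'$ units in their branch rings, so $a:=(\bar a,\,\bar b^{-1}\bar a_0\bar b_0)$ and $b:=(\bar a^{-1}\bar a_0\bar b_0,\,\bar b)$ are well-defined units of $R$ projecting to $\bar a,\bar b$ with $ab=\bar a_0\bar b_0\in\tilde A$. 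Hence the extension $(a,b)$ exists and is unique.

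I expect the only non-formal step to be the fibre-product description $R=R_x\times_{\tilde A}R_y$ of the Henselization in the first paragraph — in particular checking that the identity $A[x,y]/(xy)=A[x]\times_A A[y]$ survives localization and Henselization via flat base change, and matching the various reductions correctly. Once that is in place, the ``unique extension of torsors'' is the mechanical computation in the fibre product sketched above.
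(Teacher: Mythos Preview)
Your proof is correct and takes a genuinely different route from the paper. The paper's argument is a two-line citation: it sets $x'=\bar a x$ and $y'=\bar b y$ (extended by zero to elements of $R$) and then invokes \cite[Lemma~2.1]{F.Kato} for existence and \cite[Lemma~2.2]{F.Kato} for uniqueness of units $a=u_x$, $b=u_y$ with $x'=u_x x$, $y'=u_y y$ and $u_xu_y$ in the base. You instead identify $R$ with the fibre product $R_x\times_{\tilde A}R_y$ and solve by a direct computation, which is more elementary and yields the explicit value $ab=\bar a_0\bar b_0$. The paper's route buys brevity at the cost of an external reference; yours is self-contained and makes the mechanism transparent.

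Two remarks. First, your observation that the natural conclusion is $ab\in\tilde A$ rather than $ab\in A$ is correct: as stated the lemma really needs $A$ Henselian, which is harmless in the application (the lemma is used at stalks in the \'etale topology). Second, the fibre-product identification you flag as the one non-formal step is indeed the crux, and your justification is the right one: the short exact sequence $0\to A[x,y]/(xy)\to A[x]\oplus A[y]\to A\to 0$ holds (one checks $(x)\cap(y)=0$ in $A[x,y]/(xy)$), Henselization is flat, and Henselization commutes with quotients by ideals contained in the maximal ideal, giving $R/yR\cong R_x$, $R/xR\cong R_y$, $R/(x,y)R\cong\tilde A$.
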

\begin{proof} 
Note that via extension by zero, $x'=\bar ax$ and $y'=\bar by$ define elements in $R$ and we then find the existence of $a,b$ to follow from \cite[Lemma 2.1 ($a=u_x,b=u_y$)]{F.Kato} and their uniqueness is \cite[Lemma 2.2]{F.Kato}.
\end{proof}
Let us now study the dependence on choices. Any other chart \eqref{eq-chosen-chart-X} in reference to the given one has the form $e_1\mapsto a z_1, e_2\mapsto a^{-1} z_2$ for some $a\in B^\times$ which then can be absorbed in an accordingly different chart 
$\widehat Q\oplus_\NN\NN^2\ra (\widehat\shM_C)_\eta$ by multiplying the image of $(0,e_1)$ by $b$ and $(0,e_2)$ by $b^{-1}$ for $b$ a $w_e$'th root of $f^*(a)$. 
This operation leaves $s_{\one_e}$ invariant and it even leaves the inclusion of $(\pi^*\widehat\shM_S)_\eta$ in $(\widehat\shM_C)_\eta$ pointwise invariant.

Next, look at the effect of a change of chart $\widehat Q\oplus_\NN\NN^2\ra (\widehat\shM_C)_\eta$ while keeping the compatibility with \eqref{eq-chosen-chart-X} and also keeping $(\pi^*\widehat\shM_S)_\eta$ invariant (not necessarily pointwise) are given by multiplying the image of $(0,e_i)$ in $(\widehat\shM_C)_\eta$ by some $w_e$'th root of unity $\zeta_i$. As long as $\zeta_1\zeta_2=1$, this leaves $(\pi^*\widehat\shM_S)_\eta$ pointwise invariant but more generally, this acts on 
$(\pi^*\widehat\shM_S)_\eta$ via multiplication by $\zeta_1\zeta_2$. Everything we did is compatible with generizations $\eta\leadsto\eta'$, so we obtain the following result.
\begin{proposition} 
\label{prop-count-choice-of-lift}
Let $i_e:\uS\ra \uC$ denote the section of $\pi:\uC\ra\uS$ that marks the node $e$.
The sheaf of sets on $S$ given by the isomorphism classes of commutative diagrams of log structures (on $\uS$ and $\uC$)
$$
\xymatrix{
 i^{-1}_e\widehat\shM_{C}   & i^{-1}_e(f^*\shM_{X})\ar[l] \\
\widehat\shM_{S} \ar[u]   & \NN\times\shO^\times_S \ar[u]\ar[l] 
}
$$
that lift \eqref{fill-dash-arrow-diagram} along $i_e(S)$ is a torsor under $\mu_{w_e}$ (the ${w_e}$'th roots of unity).
\end{proposition}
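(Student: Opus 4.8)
The plan is to package the preceding analysis of choices into a torsor statement. The sheaf in question assigns to an étale open the set of isomorphism classes of the commutative diagrams of log structures described in the proposition, i.e. the lifts of \eqref{fill-dash-arrow-diagram} along $i_e(S)$; to prove it is a $\mu_{w_e}$-torsor I would (i) show it is non-empty étale-locally, (ii) define a free $\mu_{w_e}$-action on it, and (iii) show the action is transitive on local sections.

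For (i), working étale-locally on $S$, choose a chart \eqref{eq-chosen-chart-X} for $\shM_X$ near $f(\eta)$ compatible with the chart of $\bk$, together with a compatible chart $\widehat Q\oplus_\NN\NN^2\ra(\widehat\shM_C)_\eta$. Lemma~\ref{lemma-unique-extension-torsor} then lifts the sections $\bar s_i\in\shM_{C_i,\eta}$, the images of $e_i$ under \eqref{eq-Ci-to-be-glued}, to unique $s_1,s_2\in(\widehat\shM_C)_\eta$ with $s_1 s_2\in(\pi^*\widehat\shM_S)_\eta$; then $s_{\one_e}:=s_1 s_2$ defines a map $\NN\times\shO^\times_S\ra\widehat\shM_S$ which fills the dashed arrow of \eqref{fill-dash-arrow-diagram}, the required commutativity and the compatibility with the ghost-sheaf diagram \eqref{Ghost-square} being forced by \eqref{eq-node-map-f-mbar}, \eqref{split-1-in-three} and the compatibility of the charts. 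This produces a local section of the sheaf, and that the construction behaves well along $i_e(S)$ and under generization $\eta\leadsto\eta'$ is exactly what was checked in the discussion above.

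For (ii), given such a lift with its choice of chart and $\zeta\in\mu_{w_e}$, multiply the image of $(0,e_1)$ in $(\widehat\shM_C)_\eta$ by $\zeta$ and keep the image of $(0,e_2)$; by the displayed analysis of chart changes this preserves compatibility with \eqref{eq-chosen-chart-X} and the subsheaf $(\pi^*\widehat\shM_S)_\eta$ while replacing $s_{\one_e}$ by $\zeta\, s_{\one_e}$. One checks this depends only on $\zeta$ and the isomorphism class of the lift — two charts presenting the same lift differ by a pair $(\zeta_1,\zeta_2)$ with $\zeta_1\zeta_2=1$, which leaves $s_{\one_e}$ unchanged — so it is a genuine $\mu_{w_e}$-action, and it is free because a lift is determined through the dashed arrow by the section $s_{\one_e}$, while an isomorphism of the diagrams of the proposition fixes $\widehat\shM_S$ and $\NN\times\shO^\times_S$, hence fixes $s_{\one_e}$. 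For (iii), two lifts over a point differ by $s'_{\one_e}=u\, s_{\one_e}$ for a unique $u\in\shO^\times_S$, since both sections lie in the trivial $\shO^\times_S$-torsor $\shL_{\one_e}$ of Lemma~\ref{lemma-section-exists}; realizing both lifts as $s_1 s_2$ via the construction of (i) and comparing the underlying charts — first absorbing the change $e_1\mapsto az_1,\ e_2\mapsto a^{-1}z_2$ of the chart on $X$, which leaves $s_{\one_e}$ and the inclusion $(\pi^*\widehat\shM_S)_\eta\subset(\widehat\shM_C)_\eta$ pointwise invariant — the remaining ambiguity is a pair $(\zeta_1,\zeta_2)\in\mu_{w_e}^2$ changing $s_{\one_e}$ by $\zeta_1\zeta_2$, so $u=\zeta_1\zeta_2\in\mu_{w_e}$. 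Combining (i)--(iii), the sheaf is a $\mu_{w_e}$-torsor. I expect the main obstacle to be the transitivity step: being sure the choice analysis is exhaustive, i.e. that every valid lift arises from the chart construction of (i) and that the full ambiguity of charts compatible with \eqref{eq-chosen-chart-X} and preserving $\pi^*\widehat\shM_S$ is recorded by the pairs $(\zeta_1,\zeta_2)$ acting through their product; the uniqueness clause of Lemma~\ref{lemma-unique-extension-torsor} is what makes this bookkeeping rigid.
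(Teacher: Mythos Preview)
Your proposal is correct and takes essentially the same approach as the paper: the paper does not give a separate proof of the proposition but rather states it as a summary of the chart-and-choice analysis immediately preceding it (construction of $s_{\one_e}$ via Lemma~\ref{lemma-unique-extension-torsor}, absorption of $X$-chart changes by $w_e$-th roots, and the residual $(\zeta_1,\zeta_2)\in\mu_{w_e}^2$ ambiguity acting through $\zeta_1\zeta_2$). You have organized this same analysis explicitly into the existence/action/transitivity framework for a torsor, which is a clean repackaging; the only cosmetic difference is that you parametrize the action by $\zeta$ acting on $(0,e_1)$ alone rather than by the product $\zeta_1\zeta_2$, which is equivalent.
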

Let $\widehat S$ denote the fibre product of the total spaces of the torsors obtained from the edges $e\in E(\tGamma_\rho)$ via Proposition~\ref{prop-count-choice-of-lift}. 
It is a $\prod_{e}\mu_{w_e}$-torsor over $S$, and carries the scheme- and log-structure pulled back from $(\uS,\widehat\shM_{S})$. 
Let $(\widehat C,\widehat\shM_{\widehat C})\ra (\widehat S,\widehat\shM_{\widehat S})$ be the log-smooth curve that is the pullback of $(\uC,\widehat\shM_{ C})\ra (\uS,\widehat\shM_{ S})$ under $\widehat S\ra S$.
\begin{lemma} 
Define $L:=\bigoplus_e \ZZ\one_e $ as a sublattice of $\overline{\widehat \shM}^\gp_{\widehat S}$ and let $L^\sat$ be its saturation.
The inclusion $L \stackrel{\one_e\mapsto s_{\one_e}}{\lra} \widehat\shM^\gp_{\widehat S}$ extends canonically to an injection $L^\sat \ra  \widehat\shM^\gp_{\widehat S}$. 
In particular, also $K^\sat\subset L^\sat$ lifts (see Lemma~\ref{sequence-descend-to-MS}).
\end{lemma}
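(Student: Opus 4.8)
The plan is to run, with rational exponents, the $\shO^\times_{\widehat S}$-torsor computation from the proof of Lemma~\ref{lemma-section-exists}. Since $L^\sat\subset\overline{\widehat\shM}^\gp_{\widehat S}$ is saturated and $L\otimes\QQ=\bigoplus_e\QQ\,\one_e$, any $m\in L^\sat$ has a unique expression $m=\sum_e a_e\one_e$ with $a_e\in\QQ$; writing $m=(m_0,(n_e)_e)$ under the identification $\overline{\widehat\shM}_{\widehat S}=\oF_S\oplus\NN^r$ of \eqref{hatMS}, integrality of $m$ in the $\NN^r$-factor forces $n_e=a_ew_e\in\ZZ$. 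From the construction of $\widehat\shM_S$ one reads off that the $\shO^\times_{\widehat S}$-torsor $\shL_m$ of lifts of $m$ to $\widehat\shM^\gp_{\widehat S}$ is $\shL_{m_0}\otimes\bigotimes_e\shL_{q_e}^{\otimes n_e}$, with $\shL_{m_0}$ the torsor of $m_0\in\shF^\gp_S$. So it is enough to produce a canonical trivialization $s_m$ of $\shL_m$ that is multiplicative in $m$ and reduces to the already-chosen $s_{\one_e}$ when $m=\one_e$; then $m\mapsto s_m$ is the asserted extension, and it is injective because its composite with $\widehat\shM^\gp_{\widehat S}\to\overline{\widehat\shM}^\gp_{\widehat S}$ is the inclusion of $L^\sat$.

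To trivialize $\shL_m$, substitute $\one_e=V_1^{(e)}+w_eq_e+V_2^{(e)}$ from \eqref{split-1-in-three} (with $V_1^{(e)},V_2^{(e)}$ the two vertices of $e$), so $m_0=\sum_e a_e(V_1^{(e)}+V_2^{(e)})$, and feed in the canonical isomorphisms obtained inside the proof of Lemma~\ref{lemma-section-exists}, namely $\shL_{q_e}\cong\shL_{e^1}\otimes\shL_{e^2}$ and $\shL_{V_i^{(e)}}\otimes\shL_{e^i}^{\otimes w_e}\cong\ev_{e^i}^*\shO^\times_{X_i}(-D)$. Exactly as there the conormal-bundle factors cancel: the exponent of $\shL_{e^i}$ produced from the term $V_i^{(e)}$ with multiplicity $a_e$ is $-w_ea_e$, which is cancelled against the exponent $n_e=w_ea_e$ of $\shL_{q_e}^{\otimes n_e}$. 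Hence $\shL_m$ becomes canonically isomorphic to $\bigotimes_e\big(\ev_{e^1}^*\shO^\times_{X_1}(-D)\otimes\ev_{e^2}^*\shO^\times_{X_2}(-D)\big)^{\otimes a_e}$, and since the edge markings $e^1,e^2$ land at the same point of $D$ (the matching condition $f_1(e^1)=f_2(e^2)$ of \S\ref{gluing}) this is $\bigotimes_e\ev_{e^1}^*\big((\shL_1\otimes\shL_2)|_D\big)^{\otimes a_e}$, using Lemma~\ref{DF-log}(1) to identify $(\shL_1\otimes\shL_2)|_D=\shO_{X_1}(-D)|_D\otimes\shO_{X_2}(-D)|_D$. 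By Lemma~\ref{DF-log}(2) the line bundle $\shL_1\otimes\shL_2$ is canonically trivialized by $\pi$, hence so is its restriction to $D$; therefore $\shL_m\cong\bigotimes_e\ev_{e^1}^*\shO^\times_D\cong\shO^\times_{\widehat S}$, canonically, the rational tensor powers $(\cdot)^{\otimes a_e}$ being harmless precisely because they are applied to an already canonically trivial torsor. The resulting section $s_m$ is multiplicative in $m$ because every step is a tensor product in which the exponents add, and for $m=\one_e$ it is the trivialization of $\shL_{\one_e}$ of Lemma~\ref{lemma-section-exists}; that this agrees with the section $s_{\one_e}$ fixed over $\widehat S$ is built into the construction of $\widehat S$, whose role is exactly to rigidify away the $\mu_{w_e}$-ambiguity of Proposition~\ref{prop-count-choice-of-lift} in each $s_{\one_e}$.

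For the last sentence of the Lemma: $K=\bigoplus_{i=2}^r\ZZ(\one_{e_i}-\one_{e_1})\subset\bigoplus_e\ZZ\one_e=L$, so $K^\sat\subset L^\sat$ and the restriction of $m\mapsto s_m$ to $K^\sat$ is the lift needed in Lemma~\ref{sequence-descend-to-MS} to cut $\widehat\shM_S$ down to $\shM_S$. I expect the one genuinely delicate point to be the rational-exponent bookkeeping: one must make sure that the chain of canonical isomorphisms of Lemma~\ref{lemma-section-exists}, run with fractional multiplicities, really trivializes $\shL_m$ itself rather than merely some tensor power $\shL_m^{\otimes k}$, and that the outcome is independent of the expression $m=\sum_e a_e\one_e$. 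Concretely one takes the least $k$ with $km\in L$, notes that the canonical trivialization of $\shL_{km}=\shL_m^{\otimes k}$ obtained from Lemma~\ref{lemma-section-exists} is visibly a $k$-th power after the conormal cancellation, and extracts the $k$-th root; its canonicity rests on the cancellation of the $\shL_{e^i}$ and the $\pi$-trivialization of $(\shL_1\otimes\shL_2)|_D$ --- the same two ingredients as in Lemma~\ref{lemma-section-exists} --- together with the fact that over $\widehat S$ no residual root-of-unity choice survives (the torsion of $L^\sat/L$ being bounded by the weights $w_e$, as the two-parallel-edges example following Lemma~\ref{sequence-descend-to-MS} shows).
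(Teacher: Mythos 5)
Your approach is genuinely different from the paper's. The paper's proof is a one-line unwinding: a point $\hat\eta$ of $\widehat S$ above $\eta\in S$ is, by construction and Proposition~\ref{prop-count-choice-of-lift}, an isomorphism class of charts $\overline{\widehat\shM}_{S,\eta}\ra\widehat\shM_{S,\eta}$; a chart is a monoid section and therefore supplies compatible lifts for \emph{every} element of $\overline{\widehat\shM}^\gp_{S,\eta}$, in particular for all of $L^\sat$, and on $L^\sat\setminus L$ these lifts are automatically the required roots of products of the $s_{\one_e}$. You instead rerun the $\shO^\times$-torsor bookkeeping of Lemma~\ref{lemma-section-exists} with rational exponents.

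The gap is exactly where you yourself flag it, and you do not close it. Your chain of canonical isomorphisms trivializes $\shL_{km}=\shL_m^{\otimes k}$ for the least $k$ with $km\in L$; to trivialize $\shL_m$ itself you must extract a $k$-th root, and this is a genuine $\mu_k$-ambiguity of $\shO_{\widehat S}^\times$-torsors, not formal bookkeeping. The claim that after the conormal cancellation the trivialization of $\shL_{km}$ is ``visibly a $k$-th power'' does not hold as stated: after cancellation the trivializing section is $\prod_e(\ev_{e^1}^*\pi)^{ka_e}$ with $ka_e\in\ZZ$ but $a_e\notin\ZZ$ in general, and rewriting it as $\bigl(\prod_e(\ev_{e^1}^*\pi)^{a_e}\bigr)^k$ already presupposes a choice of roots of $\ev_{e^1}^*\pi$ --- precisely the choice you are trying to produce canonically. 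The appeal to ``over $\widehat S$ no residual root-of-unity choice survives'' is the assertion to be proven, not a justification, and the size estimate $L^\sat/L\hookrightarrow\prod_e\ZZ/w_e\ZZ$ only says the ambiguity group is small enough to be killed by \emph{some} $\prod_e\mu_{w_e}$-cover, not that the specific cover $\widehat S$ kills it compatibly with the multiplicative structure on $L^\sat$. The missing input is exactly what the paper invokes: $\widehat S$ was built so that its points record a whole chart (a monoid splitting of $\widehat\shM_{S,\eta}\ra\overline{\widehat\shM}_{S,\eta}$), not merely the isolated values $s_{\one_e}$, and it is the multiplicativity of that chart that singles out the $k$-th root. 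Inserting that observation turns your computation into a correct (if longer) derivation; without it the argument is incomplete.
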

\begin{proof} 
This is a tautology and follows by construction: 
a point $\hat\eta\in\widehat S$ that lies above $\eta\in S$ is identified with an isomorphism class of charts $\overline{\widehat\shM}_{S,\eta}\ra {\widehat\shM}_{S,\eta}$ whose groupification injects $L$ to ${\widehat\shM}^\gp_{S,\eta}$ as prescribed by the assertion and also maps $L^\sat$ into ${\widehat\shM}^\gp_{S,\eta}$ by mapping the additional elements to roots of products of the $s_{\one_e}$ and the choice of roots is uniquely defined by $\hat\eta$.
\end{proof}
We can now define the quotient $\shM^\gp_{\widehat S}:= \widehat\shM^\gp_{\widehat S}/K^\sat$ 
and obtain $\shM_{\widehat S}=\shM^\gp_{\widehat S}\times_{\oM^\gp_{\widehat S}} \oM_{\widehat S}$ where $\oM_{\widehat S}$ is the pullback of $\oM_{S}$ to $\widehat S$.
We have a surjection $\widehat\shM_{\widehat S}\ra \shM_{\widehat S}$ and for $\shM_{\widehat S}$ to be a log structure, it suffices to show that the structure map $\widehat\shM_{\widehat S}\ra\shO_{\widehat S}$ factors through this surjection.
This follows if we verify that the torsors given by the non-trivial elements in the $\NN^r$-summand in \eqref{hatMS} map to zero in $\shO_S$. 
And indeed, this is because the sections $q_e$ are nowhere zero in $S$, hence all $\shL_{q_e}$ and thus their products and powers map to zero.
We obtain $\shM_{\widehat C}:=\widehat\shM_{\widehat C}\oplus_{\pi^*\widehat\shM_{\widehat S}} \shM_{\widehat S}$ to have a log smooth map $(\widehat C,\shM_{\widehat C})\ra (\widehat S,\shM_{\widehat S})$ that in fact canonically extends as the left column in the diagram \eqref{square}, namely the bottom horizontal map $f_{\widehat S}:\bk\ra\widehat S$ sends the generator of $\NN$ to $s_{\one_e}$ (since with the quotient by $K^\sat$ all $s_{\one_e}$ got identified, we just call their equivalent class $s_\one$, just like all $\one_e$ got identified with $\one$). 
To obtain \eqref{square}, by Proposition~\ref{prop-count-choice-of-lift}, it remains
to argue why and how the top horizontal map in the diagram \eqref{square} is defined away  from the nodes. For simplicity, we omit various decorations of $\widehat{\ }$ on spaces in the following.
I.e., we now sit on only $C_1$ or only $C_2$ and here the map $f$ is completely determined by realizing that, for $\pi_0:X\ra \bk$ denoting structure map of the target,
$$\cM _{X}^{\gp}|_{X_i} = \cM _{X_i}^{\gp}\oplus _{\cO ^{\times}_{X _i}} \pi_0 ^*\cM _\bk^{\gp}|_{X_i}$$ 
and then considering the commutative diagram
\[ \xymatrix{
\cM _C^{\gp}|_{C_i} & \cM _{C_i}^{\gp}\ar@{_{(}->}[l]
 \\
\pi^*\cM _{S}^{\gp}|_{C_i}  \ar[u] &  f ^{*}_i (\cM_{X}^{\gp}|_{X_i}) \ar@{-->}[ul] & f _i^{*} \cM _{X_i}^{gp} \ar@/_/[ul]\ar[l] \\
& (f^{*}_i \pi_0 ^* \cM _\bk^{\gp})|_{C_i}  \ar@/^/^{\pi^*f^*_S}[ul] \ar[u] & \cO ^{\times}_{C_i}\ar[u]\ar[l] } \]
that yields as the dashed arrow away from the gluing nodes a natural homomorphism $(f^\gp)^{*} :\cM^\gp_{X} \ra \cM^\gp_{C}$ compatible with $f_S$ in the sense of \eqref{square}.
Furthermore, the induced map $(\bar f^\gp)^{*}:\oM^\gp_{X} \ra \oM^\gp_{C}$ maps $f ^{*} \oM_{X}$ into $\oM_{C}$ and is the top horizontal map in \eqref{Ghost-square}.
Hence, away from the splitting nodes,  we obtain the desired map $f^*\cM_{X} \ra \cM_{C}$ as the induced map 
$$f^*\shM_{X}=f^*\shM^\gp_{X}\times_{f^*\oM^\gp_{X}}f^*\oM_{X} \ra \shM^\gp_{C}\times_{\oM^\gp_{C}}\oM_{C}=\shM_{C}.$$
We obtained a basic stable log map
\begin{equation}
\vcenter{
\label{desired-gluing}
\xymatrix{
(\widehat C,\shM_{\widehat C})\ar^>>>>f[r]\ar_{\pi}[d]& X\ar^{\pi_0}[d]\\
(\widehat S,\shM_{\widehat S})\ar_>>>>{f_{\hat S}}[r]& \bk
}}
\end{equation}
that lifts \eqref{Ghost-square} and when applying the splitting construction of the previous section (up to taking the quotient $\hat S\ra S$) gives back the curves $f_i:C_i/S\ra X/\bk$ that we started with.

To conclude this section, it remains to observe that when producing $\hat S$, we marked a bit too much. Indeed \eqref{desired-gluing} has a non-trivial group of automorphisms, namely $\mu_l$ acting by pullback of the left column of the diagram along endomorphisms of the log structure
$\shM_{\hat S}\ra \shM_{\hat S}$ that modify a chart by pointwise fixing $\shF_{\widehat S}$ and multiplying the image of $q$ by $\zeta \in \mu_{l}$ where $q$ is the element found in the proof of Lemma~\ref{images-generate-MS}, i.e., so that $\oM^\gp_{\widehat S}=\oF^\gp_{\widehat S}\oplus q\ZZ$.
If $\rho\in \oM_{\widehat S}^\vee[1]$ is the primitive generator whose perp is $\oF_{\widehat S}$ then $\rho(\one)=l$ by \S\ref{section-graphs-to-curves} and $\rho(q)=1$.
This shows that the described action fixes $s_\one$. Furthermore, it acts transitively on the sheets of $\widehat S\ra S$ by means of the injection $\mu_l\ra \prod_e \mu_{w_e}, \zeta\mapsto \zeta^{l_e}$ (this injects because the $l_e$ are coprime since $l=\lcm(\{w_e\})$ and $l_e=l/w_e$).
We denote the quotient by $\tilde S:=\widehat S/\mu_l$ and let $\tilde C$ be the quotient log smooth curve above it. We observe
\begin{equation}
\label{gluing-degree}
\deg(\tilde S\ra S)=\frac{\prod_e w_e}{\lcm(\{w_e\})}.
\end{equation}


\section{The splitting stack}
\label{sec-splitting-stack}
The purpose of this section is to first recall Olsson's stack $\widetilde{\cL og} _{S}$ which classifies maps of fine log schemes $T\ra S$ for fixed $S$. 
We then introduce a new stack $\Spl$ that surjects to $\widetilde{\cL og} _{\bk}$ where $\bk$ denotes the standard log point. 
A typical local model for $\widetilde{\cL og} _{\bk}$ is given by an injection of monoids $h:\NN\ra Q$. Subject to $h$, a typical local model for $\Spl$ is given by a choice of facet in $Q$ with the property that its intersection with the image of $h$ is trivial.

For a fine log scheme $S$, denote by $\widetilde{\cL og} _{S}$ the Artin stack over $\uS$ due to Olsson \cite{Ol} that is defined as follows. 
The objects over a scheme morphism $\underline{T}\ra \uS$ are
the morphisms $T\ra S$ of fine log schemes over $\underline{T}\ra \uS$. The  
morphisms from $T\ra S$ to  $T'\ra S$ are the log morphisms $h: T\ra T'$ over $S$ for which $h^*\cM ' \ra \cM$ is an isomorphism.  
The stack $\widetilde{\cL og }_{S} $ is
an algebraic stack locally of finite presentation over $\uS$ (see  \cite[Theorem 1.1]{Ol}). 
Let $\cL og _{S}$ be the open substack of $\widetilde{\cL og }_{S}$ 
classifying fs log schemes over $S$ (see  \cite[Remark 5.26]{Ol}). 

\begin{definition}\label{splStack} 
Recall that $\bk=\Spec (\NN\stackrel{\tiny 1\mapsto 0}{\lra} \kk)$ denotes the standard log point.
We denote by $\Spl$ the category fibred in groupoids over the category $( \mathrm{Sch}/ \ubk ) $ of schemes over $\ubk=\Spec \Bbbk$
whose fibre over $\uT\ra \ubk$ 
is the groupoid of triples $$(T, h, \cF)$$ where $(T, h:\NN_T\oplus \Bbbk_T^{\times} \ra\cM)$ is an object in $\cL og _{\bk}$ and
$\cF $ is a subsheaf of $\cM $ satisfying that:
\begin{enumerate}

\item  For every $t\in \underline{T}$, $\cF _{\bar{t}}$  is a facet of  $\cM _{\bar{t}}$ (i.e., $ab\in \cF _{\bar{t}}\Rightarrow a,b\in \cF _{\bar{t}}$).

\item\label{logF} For the log structure $\ka : \cM  \ra \cO _T$,  $\alpha _{|_{\cM  \setminus \cF }}= 0$.

\item\label{p_extremal} 
For every $t\in \underline{T}$, $\langle \oF_{\bar{t}},\one\rangle^\gp\ot_\ZZ\RR = \oM_{\bar{t}}^\gp\ot_{\ZZ}\RR$ 
where $\one$ is the image of $1$ under the induced homomorphism $\overline{h}_{\bar{t}}:\NN_{T,\bar{t}}\ra\oM_{\bar{t}}$ 
and $\langle\oF_{\bar{t}},\one\rangle$ is the submonoid of $\oM_{\bar{t}}$ generated by $\oF_{\bar{t}}$ and $\one$.

\end{enumerate}
The morphisms from $(T, h, \cF)$ to $(T', h', \cF ')$ are the morphisms from $(T, h)\ra (T', h')$ in  $\cL og _{\bk}$
 for which $\cF$ goes to $\cF'$.
\end{definition}

Note that by Condition \eqref{logF} the pair $(\cF , \alpha _{|_{\cF }})$ is also a log structure on $\uT$.
Note also that instead of $\cF$ we may give a sheaf of facets $\oF\subset \oM$ with suitable properties because by (1) we have that $\cF$ is the inverse image of its projection $\oF$ in $\oM$.

\medskip

It is straightforward to check that $\Spl$ is a stack over $\ubk$.
Below, we will show that the forgetful morphism 
\begin{eqnarray*} \Spl \ra \cL og_{\bk} , \  \ (T, h, \cF) \mapsto (T, h)  \end{eqnarray*}
is a representable, proper, and \enquote{normalization} map (see Proposition \ref{Prop_Normal}).

Let $Q$ be a toric monoid and $q$ be a nonzero element of $Q$. Consider $\Spec (Q\ra \kk[Q]/(q))$, where $(q)$ denotes 
the ideal of $\kk[Q]$ generated  by the character $\chi^q$.  
\begin{lemma} \label{lem-smooth-atlas-piece}
By taking $T=\Spec (Q\ra \kk[Q]/(q))$ and $\NN\hra Q,\ 1\mapsto q$ for $\overline{h}$ there is a morphism 
$$\Spec  (\kk[Q]/(q) ) \ra \cL og _{\bk}$$
that is representable and smooth in the ordinary sense.
\end{lemma}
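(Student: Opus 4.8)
The plan is to recognise this morphism as one of Olsson's chart morphisms for the stack $\cL og _{\bk}$ and then to invoke the corresponding smoothness and representability statement.

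First I would make the morphism explicit and check that it has the claimed target. The rule $m\mapsto z^{m}$ (so that $z^{q}\mapsto 0$) defines a pre-log structure $Q\ra\kk[Q]/(q)$, and since $Q$ is a toric monoid the associated log structure $\cM_T$ on $\uT:=\Spec(\kk[Q]/(q))$ is fine and saturated. The homomorphism $\NN\ra Q$, $1\mapsto q$, makes the square of underlying maps to $\O$ commute (as $1\in\NN$ goes to $0\in\kk$ and $q\in Q$ goes to $z^{q}=0$), so it promotes $\uT$ to a log scheme $T$ over $\bk$. By the universal property of $\cL og _{\bk}$ \cite{Ol}, this log morphism is classified by a morphism $\uT\ra\cL og _{\bk}$, and since $\cM_T$ is fs this factors through the open substack $\cT or _{\bk}\subset\cL og _{\bk}$.

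Next I would identify $\uT\ra\cT or _{\bk}$ with the chart morphism associated to the tautological chart $\NN\ra\cM_{\bk}$ of $\bk$ and to $\theta\colon\NN\ra Q$, $1\mapsto q$. By construction the underlying scheme of Olsson's chart space is $\bk_{\theta}=\Spec\bigl(\kk\ot_{\ZZ[\NN]}\ZZ[Q]\bigr)=\Spec(\kk[Q]/(z^{q}))=\uT$, with log structure exactly $\cM_T$, so the two morphisms coincide. Since $Q$ is toric, $Q^{\gp}$ is torsion free, hence $\theta^{\gp}\colon\ZZ\ra Q^{\gp}$ is injective (this is where $q\neq0$ enters), and $\ZZ[\NN]\ra\ZZ[Q]$ is flat. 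Olsson's results on the charts of $\cL og$ \cite[Thm.~1.1 and \S5]{Ol} then give that $\bk_{\theta}\ra\cL og _{\bk}$ is representable and smooth, and restriction to the open substack $\cT or _{\bk}$ preserves both properties. If one prefers to avoid the chart statement: representability is automatic because $\cL og _{\bk}$ is algebraic, while smoothness can be obtained from the formal lifting criterion by extending a chart $Q\ra\cN$ of a given fs log structure $\cN$ over $\bk$ along a square-zero thickening, which is possible because such a thickening does not change the \'etale site and charts of fine log structures lift.

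The step I expect to be the main obstacle is bookkeeping rather than anything deep: matching $\cM_T$ with Olsson's log structure on $\bk_{\theta}$, confirming that the hypotheses of the quoted chart-smoothness statement hold ($\theta^{\gp}$ injective and the flatness above, both consequences of $Q$ being toric and $q\neq0$; here the characteristic-zero assumption is convenient, since it makes any finite part of $\Spec\kk[Q^{\gp}/\ZZ q]$ \'etale so that smoothness is not lost), and tracking the harmless passage from $\cL og _{\bk}$ to its open substack $\cT or _{\bk}$.
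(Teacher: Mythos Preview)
Your proposal is correct and lands at the same conclusion, but the route differs slightly from the paper's. You identify $\uT\to\cT or_{\bk}$ with Olsson's chart morphism $\bk_\theta\to\cL og_{\bk}$ and then appeal to the smoothness of chart morphisms (factoring through the \'etale quotient stacks of \cite[Cor.~5.25]{Ol}); this is fine, and your checks that $\theta^{\gp}$ is injective and that $\ZZ[\NN]\to\ZZ[Q]$ is flat (the latter because $\ZZ[Q]$ is a domain, hence torsion-free over the PID $\ZZ[t]$) are the right ones, with characteristic zero absorbing any torsion in $Q^{\gp}/\ZZ q$. The paper instead takes a shorter path: it verifies directly via \cite[Thm.~3.5]{K.Kato} that $T\to\bk$ is \emph{log smooth} (again using $\op{char}(\Bbbk)=0$ to allow torsion in $\coker(\NN^{\gp}\to Q^{\gp})$), and then invokes Olsson's characterisation \cite[Thm.~4.6(ii)]{Ol} that log smoothness of $T\to\bk$ is equivalent to ordinary smoothness of $\uT\to\cL og_{\bk}$, with \cite[Cor.~5.31]{Ol} handling the passage to $\cT or_{\bk}$ and \cite[Rem.~4.2]{Ol} giving representability. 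The paper's approach avoids the bookkeeping of matching chart constructions that you flag as the main obstacle; your approach has the advantage of being more explicit about the geometry of the atlas, which is in fact what gets used downstream in Lemma~\ref{Normal} and Proposition~\ref{Prop_Normal}.
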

\begin{proof}
By the toroidal characterization of log smoothness \cite[Theorem 3.5]{K.Kato}, the map $T\ra\Spec\,\bk$ is log smooth.
By the classifying properties of $\cL og _{\bk}$, \cite[Theorem 4.6 (ii) and Cor 5.31]{Ol}, we find that the map in the assertion is smooth.
The map is representable by \cite[\S4, Rem. 4.2]{Ol}.
\end{proof}

Recall the convention $Q^\vee=\Hom_{\hbox{\tiny Mon}}(Q,\NN)$ and that 
$\rho^{\perp } = \{ p \in Q \ | \ \rho (p) = 0 \}$ for $\rho \in Q^\vee [1]$ gives a bijection between facets of $Q$ and $Q^\vee [1]$.
                               
\begin{lemma} 

\begin{enumerate}

\item 
The reduced scheme $\Spec  (\kk[Q]/(q) ) ^{\mathrm{red}}$ of\/ $\Spec  (\kk[Q]/(q) )$ is canonically isomorphic to 
$$ \bigcup _{\rho \in Q^\vee[1]: \rho (q)\ne 0}\Spec  (\kk[Q]/(Q\setminus \rho^{\perp} )) , $$
the union of the closed subschemes of \/ $\Spec  (\kk[Q]/(q) )$ defined by the ideals generated by $Q\setminus \rho^{\perp}$ for varying $\rho$. 

\item 
The fibre product $\Spec  (\kk[Q]/(q))\times_{\cL og _{\bk}} \Spl$ is representable 
by the disjoint union $$ \coprod _{\rho \in Q^\vee[1]: \rho (q)\ne 0} \Spec  (\kk[Q]/(Q\setminus \rho^{\perp}  )) $$ of irreducible components of $\Spec  (\kk[Q]/(q) ) ^{\mathrm{red}}$.

\end{enumerate}
\label{Normal} 

\end{lemma}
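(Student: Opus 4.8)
\textbf{Part (1)} is a computation with the monomial ideal $(q)=z^{q}\kk[Q]$. I would note that $\sqrt{(q)}$ is again a monomial ideal, spanned by those $z^{p}$ ($p\in Q$) with $np\in q+Q$ for some integer $n\ge 1$, and that—$Q$ being saturated—$np\in q+Q$ holds iff $n\rho(p)\ge\rho(q)$ for all $\rho\in Q^{\vee}[1]$; since $Q^{\vee}[1]$ is finite, such an $n$ exists iff $\rho(p)>0$ whenever $\rho(q)\ne 0$. On the other hand $(Q\setminus\rho^{\perp})$ is the monomial ideal spanned by the $z^{p}$ with $p\notin\rho^{\perp}$, i.e.\ $\rho(p)>0$; it contains $z^{q}$ because $\rho(q)\ne 0$; and $\kk[Q]/(Q\setminus\rho^{\perp})=\kk[\rho^{\perp}]$ is a domain since the face $\rho^{\perp}$ is again a toric monoid. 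Hence $\sqrt{(q)}=\bigcap_{\rho:\rho(q)\ne 0}(Q\setminus\rho^{\perp})$, so $\Spec(\kk[Q]/(q))^{\mathrm{red}}=\Spec(\kk[Q]/\sqrt{(q)})$ is the reduced scheme-theoretic union $\bigcup_{\rho}\Spec\kk[\rho^{\perp}]$ of its irreducible components $\Spec\kk[\rho^{\perp}]$ (irreducible, and pairwise incomparable since distinct facets of $Q$ are).

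For \textbf{Part (2)} the plan is a functor-of-points argument. Write $R=\kk[Q]/(q)$ and let $\oM$ be the ghost sheaf on $\Spec R$ of the log structure defined by the chart $Q\to R$; recall from Lemma~\ref{lem-smooth-atlas-piece} that this realises $\Spec R$ as an object of $\cT or_{\bk}$ via $\NN\hookrightarrow Q$, $1\mapsto q$. For a $\ubk$-scheme $\uT$, an object of $\Spl\times_{\cT or_{\bk}}\Spec R$ over $\uT$ is then the same as a pair $(g,\oF)$ with $g\colon\uT\to\Spec R$ and $\oF\subseteq g^{*}\oM$ a subsheaf of monoids satisfying Definition~\ref{splStack}(1)--(3) for the pulled-back log structure; there are no nontrivial automorphisms, so this defines a functor of sets. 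For each $\rho\in Q^{\vee}[1]$ with $\rho(q)\ne 0$ the closed immersion $\Spec\kk[\rho^{\perp}]\hookrightarrow\Spec R$ carries a canonical such structure, namely $\oF^{\mathrm{can}}_{\rho}:=$ the image of the face $\rho^{\perp}\subseteq Q$ in $\oM|_{\Spec\kk[\rho^{\perp}]}$: at a point $x$ the ghost stalk is $Q/F_{x}$ with $F_{x}=\{p:z^{p}\notin\mathfrak{p}_{x}\}\subseteq\rho^{\perp}$, so $\rho^{\perp}/F_{x}$ is a facet of $Q/F_{x}$ (giving (1)); $z^{p}=0$ in $\kk[\rho^{\perp}]$ for $p\notin\rho^{\perp}$ (giving (2)); and $\rho$ pairs nontrivially with the image $\one$ of $q$ (giving (3)). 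This defines a morphism $\psi\colon\coprod_{\rho:\rho(q)\ne 0}\Spec\kk[\rho^{\perp}]\to\Spl\times_{\cT or_{\bk}}\Spec R$, and I claim $\psi$ is an isomorphism.

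Injectivity of $\psi$ on $\uT$-points is immediate: from $(g,\oF)$ one reads off $\rho$ as the preimage in $Q$ of the facet $\oF_{\bar t}$ of $Q/F_{g(t)}$ at any $t$, and then the induced factorization of $g$ through the closed subscheme $\Spec\kk[\rho^{\perp}]$ is unique. For surjectivity I would start from an arbitrary $(g,\oF)$ over $\uT$: at each $t$ the image $\one\in Q/F_{g(t)}$ of $q$ is nonzero (since $z^{q}=0\in R$ lies in every prime), so by (1) and (3) the face $\oF_{\bar t}$ is a \emph{proper} facet, $\oF_{\bar t}=\rho_{t}^{\perp}/F_{g(t)}$ for a unique $\rho_{t}\in Q^{\vee}[1]$ with $\rho_{t}(q)\ne 0$; and condition (2), read as the vanishing of $\alpha$ on the subsheaf-complement of $\oF$, forces $g^{\#}(z^{p})$ to have vanishing germ at $t$ whenever $\rho_{t}(p)>0$. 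The crucial point is that $t\mapsto\rho_{t}$ is locally constant: over a small neighbourhood $W\ni t$ the chart identifies $\Gamma(W,g^{*}\oM)=Q/F_{g(t)}$, with the germ maps to $Q/F_{g(t')}$ being the quotient maps, so the proper facet $\oF_{\bar t'}=\rho_{t'}^{\perp}/F_{g(t')}$ contains the image of $\oF_{\bar t}=\rho_{t}^{\perp}/F_{g(t)}$; this gives $\rho_{t}^{\perp}\subseteq\rho_{t'}^{\perp}$, and two facets of $Q$ with this containment are equal, so $\rho_{t'}=\rho_{t}$. Hence $\uT=\coprod_{\rho}U_{\rho}$ with $U_{\rho}=\{t:\rho_{t}=\rho\}$ open and closed; on $U_{\rho}$, for each $p\notin\rho^{\perp}$ the function $g^{\#}(z^{p})$ has vanishing germ at every point, hence vanishes, so $g|_{U_{\rho}}$ factors scheme-theoretically through $\Spec\kk[\rho^{\perp}]$, and there $\oF$ agrees with the pullback of $\oF^{\mathrm{can}}_{\rho}$ (both are chart-generated with the same stalks). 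Gluing the pieces shows $(g,\oF)$ lies in the image of $\psi$, so $\psi$ is an isomorphism of functors; thus the fibre product is represented by $\coprod_{\rho:\rho(q)\ne 0}\Spec\kk[\rho^{\perp}]$, which by Part (1) is the disjoint union of the irreducible components of $\Spec(\kk[Q]/(q))^{\mathrm{red}}$.

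The routine parts are Part (1) and the verification that $\oF^{\mathrm{can}}_{\rho}$ satisfies the three axioms. The main obstacle is the local constancy of $\rho_{t}$ in Part (2): one must rule out that the facet selected by $\oF$ varies as one moves within $\uT$, and this rests on the local description of the ghost sheaf near a point together with the fact—forced by condition (3) and $\one\ne 0$—that $\oF_{\bar t}$ is always a \emph{proper} facet, which is exactly what prevents the fibre product from being larger than the reduced subscheme.
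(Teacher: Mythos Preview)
Your proposal is correct and takes essentially the same approach as the paper. For (1) you compute the radical of $(q)$ directly where the paper simply invokes the toric divisor formula $\mathrm{div}(\chi^q)=\sum_{\rho}\rho(q)\,T_\rho$; for (2) both arguments construct mutually inverse morphisms between the disjoint union and the fibre product, and your local-constancy argument via generization of the ghost sheaf is precisely an unpacking of the paper's one-line ``since $g_s$ is compatible with specialization, $\rho^{\perp}$ is independent of the choice of $s\in S$.''
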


\begin{proof} 
Let $\uT:=\Spec  (\kk[Q]/(q) )$ and $\uT _\rho :=  \Spec  (\kk[Q]/(Q\setminus \rho^{\perp} ))$.
Recall the well-known fact that
the divisor $\mathrm{div}(\chi ^q)$ of the character $\chi ^q$ as a rational function on $\Spec (\kk [Q])$ 
is $\sum _{\rho \in Q^{\vee}[1]} \rho (q) \uT _\rho $.
This proves (1).

For (2), we first construct a natural $\uT$-morphism 
\begin{equation} \label{eq-Trho-chart}
\coprod _{\rho : \rho (q) \ne 0} \uT_\rho \ra \uT \times_{\cL og _{\bk}} \Spl.
\end{equation}
Note that the affine coordinate ring of $\uT _\rho$ has
two expressions $\kk[\rho ^{\perp} ]$ and $\kk[Q]/(Q\setminus \rho ^{\perp} )$ which are isomorphic via the inclusion $\rho^{\perp} \subset Q$.
Hence on $\uT$ we can consider two induced log structures:
$\cF$ defined by $\Spec (\rho^{\perp} \ra \kk[\rho^{\perp}])$ and $\cM _{T_\rho}$ defined by $\Spec (Q \ra \kk[Q]/(Q\setminus \rho^{\perp}))$.
For $t \in \uT _\rho$, denote by  $\alpha _{t}$ the natural homomorphism $Q\ra \cO _{\uT _\rho, \bar{t}}$. 
Then $\alpha _t^{-1}(\cO ^\times _{\uT_\rho, \bar{t}}) \subset \rho^{\perp}$, so since $\rho^\perp\subset Q$ is a facet, 
$$ 
\overline{\cF _{\bar{t}} }\ \cong\ \rho^{\perp} /\alpha _t^{-1}(\cO ^\times _{\uT_\rho, \bar{t}})
\  \subset\  
Q/\alpha _t^{-1}(\cO ^\times _{\uT_\rho, \bar{t}})\ \cong\ \overline{\cM}_{T_{\rho}, \bar{t}}
$$ is also one.
Let $\iota _\rho$ denote the inclusion $\uT _\rho \ra \uT $. 
The pair $(\iota _{\rho}, \cF \ra  \cM _{T_\rho} \leftarrow  \NN_{\uT _{\rho}} ) $ can be considered 
as an object of $(\underline{T}\times_{\cL og _{\bk}} \Spl ) (\uT_\rho)$.

Conversely, we construct a natural $\uT$-morphism from $\underline{T}\times_{\cL og _{\bk}} \Spl$ to $\coprod_{\rho : \rho (q) \ne 0} T_\rho$.
Suppose that we are given a morphism $\uS \ra \underline{T}\times_{\cL og _{\bk}} \Spl$ by data
$h:\uS\ra\uT$, $\NN_S\oplus\Bbbk^{\times}_S\ra \cM_S\leftarrow\cF_S$ 
such that $\NN_S\oplus\Bbbk^{\times}_S\ra\cM_S$  is the pullback of  $\NN_T\oplus\Bbbk^{\times}_T\ra\cM_T$ under $h$. 
Suppose $\uS$ is connected.
Fix $s\in S$, the composition $g_s:Q\ra \cM_{\uT,h(\bar s)}\ra \cM_{S, \bar{s}}$ induces a chart, so $Q_0:=g_s^{-1}(\cM_{S, \bar{s}}^\times)$ is a face of $Q$ and $g_s$ induces
an isomorphism $Q/Q_0\stackrel\sim\lra \oM_{S, \bar{s}}$. 
In particular, $g^{-1}_s(\oF_{S, \bar{s}})$ is a facet $\rho^\perp$ of $Q$ for a unique $\rho$ with $\rho (q) \ne 0$.
Since $g_s$ is compatible with specialization, $\rho^{\perp}$ is independent of the choices of $s\in S$.
By \eqref{logF} of Definition~\ref{splStack}, $h:\uS\ra\uT$ factors through $\uS\ra \uT_\rho\ra\uT$.  

The above two natural morphisms are inverse to each other. 
 \end{proof}

\begin{proposition}
\begin{enumerate} 
\item The morphism $$\coprod _{{Q,q,\rho}} \uT_\rho\ra \Spl$$ obtained via \eqref{eq-Trho-chart} is representable, smooth, and surjective. Here, the disjoint union runs over all toric monoids $Q$ with nonzero element $q$ and
$\rho \in Q^\vee[1]$ such that $\rho (q)\ne 0$.

\item  The fibred category  $\Spl$ is a pure zero-dimensional algebraic stack over $\ubk$.

\item The forgetful morphism $\Spl \ra \cL og_{\bk}$ is representable, affine, proper and surjective.
Every map $V\ra \cL og_{\bk}$ from a normal variety $V$ factors uniquely through the forgetful morphism $\Spl \ra \cL og_{\bk}$.
\item  Given an object $(T,h:\NN_T\oplus\kk^\times_T\ra\shM_T)$ of $\cL og_{\bk}$, the inverse image in $\Spl$ is in natural bijection with the set 
$$\left\{\rho\in\Gamma(T,\oM_T^\vee)\,\mid\,\rho(\bar h(1))\neq 0, \rho\in\oM_{T,\bar t}^\vee[1]\hbox{ for all }\bar t\in T\right\}.$$
\end{enumerate}
\label{Prop_Normal}
\end{proposition}

\begin{proof}
By \cite{Ol}, Theorem 1.1 and Remark 5.26, see also page $777$ in loc.cit.~for the notations $\mathcal{S}_P$, $\mathcal{S}_Q$ in Corollary 5.25, we conclude that $\cL og_{\bk}$ is an algebraic stack and
\begin{equation} \label{eq-Olsson-etale-cover}
\coprod _{Q,q}
[\Spec ( \kk [Q] ) /\Spec (\kk[Q^{gp}]) ]\times_{[\Spec ( \kk [\NN] ) /\Spec (\kk[\ZZ]) ]} \Spec(\kk) \ra \cL og_{\bk}
\end{equation}
is representable, \'etale, and surjective
where $\NN\ra Q$ is given by $1\mapsto q$.
The map in Lemma~\ref{lem-smooth-atlas-piece} factors through this etal\'e cover, hence
$$\coprod _{Q,q} \Spec  (\kk[Q]/(q) ) \ra \cL og_{\bk}$$ is smooth, representable and surjective.
Assertion (1) now follows by base change to $\Spl$ via Lemma~\ref{Normal},\,(2).

The claim in (2) that $\Spl$ is an algebraic stack now follows from \cite[Lemma C.5]{AOV}. 
The stack $\cL og_{\bk}$ is pure zero-dimensional because the left hand side in \eqref{eq-Olsson-etale-cover} is pure zero-dimensional.
By part (2) of Lemma~\ref{Normal}, the map $\Spl\ra \cL og _{\bk} $ is finite and surjective, so $\Spl$ is also pure zero-dimensional.

Part (3) is similarly a direct consequence of Lemma~\ref{Normal}. 

To show (4), first consider the special case where $T$ has a global chart, say $T=\Spec(Q\ra R)$ for some $\kk$-algebra $R$ and the map to $\bk$ given by $q\in Q$, then the natural map $T\ra \cL og_{\bk}$ factors canonically through the map given in Lemma~\ref{lem-smooth-atlas-piece} and the statement of (4) follows directly from Lemma~\ref{Normal}~(2).
The general case reduces to the special case by choosing an atlas and checking that the statement of (4) is compatible with localization.
The latter follows from the discussion in \S\ref{sec-specialization}: the collection of elements $\rho$, one for each chart, naturally glues to global section of $\oM_T^\vee$.
\end{proof}


\section{Decomposing moduli stacks of curves} \label{section-decomp-curve-moduli}
Let $\mathscr{M}_{g, n}$ 
denote the moduli stack of prestable curves with its natural log smooth structure \cite{F.Kato},\cite[Appendix A]{GS} over the trivial log point $\ubk$. 
Using the splitting stack introduced in the previous section, we define
\begin{gather*}
 \fM  := \cL og _{\mathscr{M} _{g, n}}\times _{\cL og _{\ubk}} \cL og _{\bk}  , \quad    \fM ^{\spl}:= \PY,  \\
 \mathscr{M}  :=\mathscr{M}_{g, n}(X/\bk, \beta).
\end{gather*}  
Objects of $\mathscr{M}$ are diagrams of the form \eqref{square} but with $W=\bk$. Forgetting the right column in the diagram defines a map of log stacks
$\mathscr{M}\ra\mathscr{M}_{g, n}$ and forgetting the top row in the diagram defines a map to $\bk$. 
Taken together, we obtain a log map $\mathscr{M}\ra \mathscr{M}_{g, n}\times_\ubk \bk$ and thus a map
$\underline{\mathscr{M}}\ra \fM$ that we call \emph{forget-target-morphism}. 
By abuse of notation, we will omit the underline on $\mathscr{M}$ and write $\mathscr{M}\ra \fM$ for this map.
Since $\mathscr{M}_{g, n}$ is of pure dimension $3g-3+n$, the same holds true for $\cL og _{\mathscr{M} _{g, n}}$ by \cite[Corollary 5.25]{Ol}. Proposition~\ref{Prop_Normal} now implies that $\fM$ and $\fM ^{\spl}$ are of pure dimension $3g-3+n$ as well.

If $C\ra S$ is an object in $\fM ^{\spl}$ then, by Definition~\ref{splStack}, we have a global section $s_\one:=h(1)\in\Gamma(S,\shM_S)$ whose image in $\Gamma(S,\oM_S)$ we denote by $\one$.
We also have, by Proposition~\ref{Prop_Normal}\,(4), $\rho\in\Gamma(S,\oM_S^\vee)$ with $l:=\rho(\one)\neq 0$.
Furthermore, we have a map $\shM^{C/S}_S\ra \shM_S$ from the natural log structure $\shM^{C/S}_S$ on $ \mathscr{M}_{g, n}$ to the one given with the object. 
If $e$ is a node of a fibre of $C\ra S$ over a point $s\in S$, there is an $\NN$-summand in $\oM^{C/S}_{S,s}$ and its generator maps to an element $q_e$ in $\oM_{S,s}$. 
We call the node $e$ a \emph{splitting node} if $l_e:=\rho(q_e)\neq 0$ and note that if this is the case, this node doesn't ever get smoothed anywhere over $S$ (since $q_e$ then generates $(\oM_S/\rho^\perp)\otimes\QQ$ which is nowhere trivial on $S$ since $\rho(q_e)\neq 0$). If $\bar\Gamma_\rho$ denotes the graph obtained from collapsing all non-splitting edges in a fibre of $C\ra S$, we find that all fibres of $C\ra S$ are marked by this graph.
Adding markings $n_V$ and genera $g_V$ as obtained from $C$ as decorations to the graph $\bar\Gamma_\rho$, we obtain part of the data of \S\ref{sec-graphs}. 
We set $w_e:=\frac{l}{l_e}$ which may be rational a priori.
For fixed $g,n$, let $\Omega(g,n)$ denote the set of graphs with vertices decorated by $g_V$, $n_V$ and edges with rational $w_e$ satisfying \eqref{genus} and \eqref{partition_n}.

The stack $\fM$ is too big for our purpose of arriving at the statements of Lemma~\ref{lem-characterize-components} and Lemma~\ref{lemma-degree-of-mu} below.
To be able to state these lemmata, we will therefore choose a sufficiently small open substack $\fM_0$ with the property that the forget-target-morphism $\mathscr{M}\ra \fM$ factors through the open embedding 
$\fM_0\subseteq \fM$ as follows. Let $P:M\ra \fM$ be a smooth presentation. For every point $x$ in the image of $\mathscr{M}\ra \fM$, let $\hat{x}\in M$ be a point so that $P(\hat x)=x$ and let $U_x\ra M$ be an \'etale neighbourhood of $\hat x$ that is a chart for the log structures at $\hat x$. We define $\fM_0$ to be the union of $P(U_x)$ for $x$ running through the points in the image of 
$\mathscr{M}\ra \fM$. We define $\fM_0^{\spl}:=\fM_0\times_{\fM} \fM ^{\spl}$.

For fixed $g,n$, let $\Omega(g,n)$ denote the set of graphs with vertices decorated by $g_V$, $n_V$ and edges with $w_e$ satisfying \eqref{genus} and \eqref{partition_n} and let
$$\bar \Omega(g,n,\beta):=\im(\Omega(g,n,\beta)\ra \Omega(g,n))$$
be the (finite) image of the map that forgets $\beta_V$. 
For $\Gamma \in \bar \Omega(g,n,\beta)$
let $\fMG$ be the open and closed substack of $\fM_0^{\spl}$ whose points are marked by $\Gamma$ by the discussion in the preceding paragraphs.

\begin{lemma}
\label{lem-characterize-components}
$\fM_0^{\spl} = \coprod_{\Gamma\in\bar \Omega(g,n,\beta)} \fMG$ 
\end{lemma}
\begin{proof} 
We only need to show that $\fM_0^{\spl}$ is contained in the right hand side. 
Every point $x$ in $\fM_0$ lies in a chart $U_y$ of the log structure of some point $y$ that is contained in the image of 
 $\mathscr{M}\ra\fM_0$. 
 This implies that $\oM_{U_y,x}^\vee$ is a face of $\oM_{U_y,y}^\vee$ and for the latter we have an interpretation as a parameter space of integral tropical curves by Lemma~\ref{trop-moduli-spaces}. 
 The points of the inverse of $y$ in $\fM_0^{\spl}$ are in bijection with the elements of $\oM_{U_y,y}^\vee[1]$ that evaluate non-trivial on $\one$ by Proposition~\ref{Prop_Normal}\,(4). 
  By \eqref{graph-to-curve}, these same elements yield elements of $\Omega(g,n,\beta)$ under $\Trop$ and so the corresponding graphs with $\beta_V$ forgotten lie in $\bar \Omega(g,n,\beta)$.
By \S\ref{sec-specialization}, the points in the inverse image of $x$ in $\fM_0^{\spl}$ are indexed by the subset $\oM_{U_y,x}^\vee[1]$ of $\oM_{U_y,y}^\vee[1]$ which are therefore also labelled by graphs in $\bar \Omega(g,n,\beta)$. 
\end{proof}

We use the composition
$$\mu _{\fMG}\colon \fMG\ra \fM_0^{\spl} \ra \fM_0$$
to define
\begin{equation} \label{def-KGamma}
\mathscr{M}_{\Gamma}    := \mathscr{M} \times _{\fM_0}  \fMG.
\end{equation}
Let $\mu_\Gamma\colon \mathscr{M}_{\Gamma}\ra \mathscr{M}$ denote the forgetful map. 
By Proposition~\ref{Prop_Normal},\,(4) and Lemma~\ref{lem-characterize-components}, the map $\coprod_\Gamma\mathscr{M}_{\Gamma}\stackrel{\coprod\mu_\Gamma}\ra \mathscr{M}$ is surjective.
We arrive at a commutative diagram with Cartesian squares
\begin{equation} \label{Cartesian-curve-moduli-split}
\vcenter{
\xymatrix{  \coprod _{\Gamma \in \bar\Omega (g, n, \beta ) } \mathscr{M} _{\Gamma} \ar[r]  \ar[d]^{\coprod\mu _{\Gamma} } 
&     \coprod _{\Gamma \in  \bar\Omega (g, n, \beta ) } \fMG  \ar[d]^{\coprod \mu  _{\fMG }} \ar[r] &   \fM ^{\spl} \ar[d] \ar[r] 
     &     \cL og ^{\mathrm{spl}}_{\bk}  \ar[d]  \\
 \mathscr{M} \ar[r] &  \fM _0  \ar[r]_{\mathrm{open}\ } 
 &  \fM \ar[r]_{ \mathrm{pr}}  &   \cL og _{\bk}. } }
\end{equation}
Let $\pi: \shC \ra \mathscr{M} $ be the universal curve for $\mathscr{M}$ and $f:\shC\ra X$ be the universal map.  
Recall that  
$$
(R\pi _{*}( f^* \shT_{X/\bk}))^\vee\ra \LL_{\underline{\mathscr{M}}/\underline\fM_0} 
$$
is the natural perfect obstruction theory for $\mathscr{M}$ relative to $\fM_0$ (see \cite[\S5]{GS}).
Then similarly $\mu _{\Gamma}^* (R \pi _{*} (f^* \shT_{X/\bk})^\vee$ defines a perfect obstruction theory for $\mathscr{M} _{\Gamma}$
relative to $\fM _{\Gamma}$, since the intrinsic normal cone of $\mathscr{M}_{\Gamma}$ relative to $\fM_{\Gamma}$ is closely immersed to the pullback of 
the intrinsic normal cone of $\mathscr{M}$ relative to $\fM _0$ by the Cartesianness of \eqref{Cartesian-curve-moduli-split}, Proposition~\ref{Prop_Normal} and Lemma~\ref{Normal}.

\begin{lemma} 
\label{lemma-degree-of-mu}
 Under the projective morphisms $\mu_{\fM_{\Gamma}}$, as an identity in $A_{3g-3+n}(\fM _0)$, we have
 \[ \sum _{\Gamma \in  \bar\Omega (g, n, \beta)}  l_\Gamma (\mu_{\fM _{\Gamma}})_*[\fM _{\Gamma}] = [\fM _0].  \]  
\end{lemma}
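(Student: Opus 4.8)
The plan is to reduce the identity to an explicit toric computation on an étale atlas of $\fM$, using Lemma~\ref{Normal} to describe $\fM^{\spl}\ra\fM$ locally. First I would assemble the structural input. From the fibre-product definitions $\fM^{\spl}=\fM\times_{\cT or_{\bk}}\Spl$, so the forgetful map $\fM^{\spl}\ra\fM$ is the base change of $\Spl\ra\cT or_{\bk}$; by Proposition~\ref{Prop_Normal} it is representable and proper. Since $\fM_{g,n}\ra\ubk$ is log smooth, $\cT or_{\fM_{g,n}}\ra\cT or_{\ubk}$ is smooth, hence $\operatorname{pr}:\fM\ra\cT or_{\bk}$ is smooth and $\fM^{\spl}\ra\Spl$ is smooth; in particular $\fM^{\spl}$ is reduced, being smooth over the reduced stack $\Spl$ (cf.\ Proposition~\ref{Prop_Normal}). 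By the proof of Proposition~\ref{Prop_Normal}, as $Q$ ranges over toric monoids with a distinguished nonzero $q$ and $\NN\hra Q$, $1\mapsto q$ the chart coming from $\bk$, the maps $\Spec(\kk[Q]/(q))\ra\cT or_{\bk}$ form a smooth representable surjective atlas. For one such $Q$ write $\fM_Q:=\fM\times_{\cT or_{\bk}}\Spec(\kk[Q]/(q))$ and, for $\rho\in Q^\vee[1]$ with $\rho(q)\neq0$, $\fM_Q^{\rho}:=\fM\times_{\cT or_{\bk}}\Spec(\kk[\rho^\perp])$.

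By Lemma~\ref{Normal}\,(2), base change along $\Spec(\kk[Q]/(q))\ra\cT or_{\bk}$ identifies $\fM^{\spl}\times_{\fM}\fM_Q$ with $\coprod_{\rho}\fM_Q^{\rho}$, and each $\fM_Q^{\rho}\ra\fM_Q$ is a closed immersion whose image is a reduced union of irreducible components of $\fM_Q$ (reduced because $\fM^{\spl}$ is; a union of components because $\coprod_\rho\Spec(\kk[\rho^\perp])\ra\Spec(\kk[Q]/(q))$ is componentwise a closed immersion onto a reduced component and $\fM_Q\ra\Spec(\kk[Q]/(q))$ is flat). Every irreducible component of $\fM_Q$ arises this way for a unique $\rho$. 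Moreover $\fM_Q^{\rho}$ occurs in the fundamental cycle $[\fM_Q]$ with multiplicity $\rho(q)$: by the divisor identity $\operatorname{div}(\chi^q)=\sum_{\rho\in Q^\vee[1]}\rho(q)\,\uT_\rho$ recalled in the proof of Lemma~\ref{Normal}, the reduced prime divisor $\uT_\rho$ carries multiplicity $\rho(q)$ in $[\Spec(\kk[Q]/(q))]$, and smoothness of $\fM_Q\ra\Spec(\kk[Q]/(q))$ transports this. Finally, because $q=\bar h(1)$ one has $\rho(q)=\rho(\one)=l$, the interval length of the tropical curve attached to $\rho$ in \S\ref{section-graphs-to-curves} and \S\ref{gluing}; and since splitting nodes never get smoothed, the collapsed bipartite graph of that tropical curve is constant on the component $\fM_Q^{\rho}$, so $\rho$ has a well-defined associated decorated graph.

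Now I would compare the two cycles after this base change, which suffices since the $\fM_0\times_\fM\fM_Q$ form a smooth cover of $\fM_0$ and cycles are determined on a smooth cover. Call $\rho$ (for this chart) \emph{admissible} if its associated graph is some $\Gamma\in\bar\Omega(g,n,\beta)$ with all $w_e$ integral and $l=l_\Gamma$, i.e.\ precisely when $\fM_Q^{\rho}$ lies over $\fMG$; then $\fMG\times_\fM\fM_Q=\coprod_{\rho\text{ adm.},\,\Gamma(\rho)=\Gamma}\fM_Q^{\rho}$ and, for admissible $\rho$, the multiplicity $\rho(q)=l=l_{\Gamma(\rho)}$. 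On the one hand $\fM_0\times_\fM\fM_Q$ is the union of the components $\fM_Q^{\rho}$ with $\rho$ admissible, with the structure inherited from $\fM_Q$ (it is open in $\fM_Q$), so $[\fM_0]$ pulls back to $\sum_{\rho\text{ adm.}}l_{\Gamma(\rho)}\,[\fM_Q^{\rho}]$. On the other hand, since $\fM^{\spl}$ is reduced and each $\fM_Q^{\rho}\ra\fM_Q$ is an isomorphism onto a union of components, proper pushforward (commuting with the flat base change $\fM_Q\ra\fM$) sends $[\fMG]=\sum_{\rho\text{ adm.},\,\Gamma(\rho)=\Gamma}[\fM_Q^{\rho}]$ to itself under $\mu_{\fMG}$; hence $\sum_{\Gamma\in\bar\Omega(g,n,\beta)}l_\Gamma(\mu_{\fMG})_*[\fMG]$ pulls back to $\sum_{\rho\text{ adm.}}l_{\Gamma(\rho)}[\fM_Q^{\rho}]$, matching the pullback of $[\fM_0]$. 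As this holds over every $Q$, the identity holds in $A_*(\fM_0)$.

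The step I expect to be the main obstacle is the multiplicity bookkeeping: one must keep track that $\fM_0$ carries the (possibly nonreduced) structure inherited from $\fM$, verify that the multiplicity of a component of $\fM_0$ in $\fM$ is exactly $l_\Gamma$ — which rests on the two inputs $q=\bar h(1)$ (so $\rho(q)=\rho(\one)=l$) and the definitional normalization $l=l_\Gamma$ built into $\fMG$ — and check that the assignment ``$\rho\mapsto$ associated decorated graph'' is locally constant, so that the $\fMG$ genuinely partition the relevant components of $\fM^{\spl}$; the last point is where one uses that a splitting node never gets smoothed over its component. The genuinely geometric input is Lemma~\ref{Normal} together with the toric divisor formula; everything else is bookkeeping with the fibre products and with flat-pullback/proper-pushforward compatibilities.
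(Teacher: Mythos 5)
Your proof takes essentially the same route as the paper's: reduce via Cartesianness of \eqref{Cartesian-curve-moduli-split} to the statement about $\Spl\ra\cT or_\bk$, pass to the smooth presentation from Proposition~\ref{Prop_Normal}, use Lemma~\ref{Normal} to identify the pullback of $\fM^{\spl}$ with $\coprod_\rho\fM_Q^\rho$, and compute the multiplicity of each component via the toric divisor formula $\div(\chi^q)=\sum_\rho\rho(q)\,\uT_\rho$ together with $\rho(q)=\rho(\one)=l_\Gamma$. You spell out more of the bookkeeping (reducedness, local constancy of $\rho\mapsto\bar\Gamma_\rho$, admissibility) but the structure and the two key inputs—Lemma~\ref{Normal} and standard toric vanishing order—coincide exactly with the paper's argument.
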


\begin{proof} 
The morphisms $\mu_{\fM_{\Gamma}}$ are affine and proper by Proposition~\ref{Prop_Normal},\,(3) and thus projective.
By the Cartesianness of \eqref{Cartesian-curve-moduli-split}, it is enough to prove the corresponding statement for the forgetful morphism $\Spl \ra\cL og_\bk$. 
The statement can be checked on the map of presentations as given in Proposition~\ref{Prop_Normal}. 
The components of the presentation are studied in Lemma~\ref{Normal} and so the claim follows if we show that the cycle $[\Spec  (\kk[Q]/(Q\setminus \rho^{\perp} ))]$ appears with coefficient $l_\Gamma$ in $[\Spec\kk[Q]/(\one)]$. 
By assumption, $l_\Gamma=\rho(\one)$ and by standard toric geometry $l_\Gamma$ is the vanishing order of the character $\chi^\one$ on $\Spec  (\kk[Q]/(Q\setminus \rho^{\perp} ))$, so we conclude the proof.
\end{proof}

Consider $\fM _{\Gamma}' := \fM_{\Gamma} \times _{\ubk} \Spec (\kk[x]/(x^{l_\Gamma}) )$ in order to have the induced projective morphism $\mu':\coprod _{\Gamma} \fM _{\Gamma}' \ra \fM _0$ be of pure degree one.
Hence, by applying \cite[Proposition 5.29]{Man} (see also \cite{HW}) to $\mu'$, we get 
\begin{lemma}\label{SplFund}  
$
\begin{array}{l} \sum _{\Gamma  \in \bar{\Omega}(g, n,\beta ) }  l_{\Gamma} (\mu _{\Gamma})_*
 [ \mathscr{M} _{\Gamma} / \fM_{\Gamma},  \mu _{\Gamma}^* (R \pi _{ *} f^* \shT_{X/\bk})^\vee ]  = [\mathscr{M} / \fM _0 ,  (R \pi _{ *} f^* \shT_{X/\bk})^\vee ]. 
\end{array} 
$
\end{lemma}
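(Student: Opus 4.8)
The plan is to obtain the identity as an application of Costello's comparison theorem for virtual classes under morphisms of pure degree one, combining the two ingredients already assembled above: the Cartesian diagram \eqref{Cartesian-curve-moduli-split}, which exhibits $\shK_\Gamma=\shK\times_{\fM_0}\fM_\Gamma$, and the cycle identity $\sum_\Gamma l_\Gamma(\mu_{\fM_\Gamma})_*[\fM_\Gamma]=[\fM_0]$ of Lemma~\ref{lemma-degree-of-mu}. First I would check that the morphism $\mu'\colon\coprod_\Gamma\fM_\Gamma'\ra\fM_0$ defined above is of pure degree one: since $\Spec(\kk[x]/(x^{l_\Gamma}))$ is a fat point of length $l_\Gamma$, the source has fundamental cycle $\sum_\Gamma l_\Gamma[\fM_\Gamma]$, and as $\mu'$ factors through $\coprod_\Gamma\fM_\Gamma$ one gets $\mu'_*[\coprod_\Gamma\fM_\Gamma']=\sum_\Gamma l_\Gamma(\mu_{\fM_\Gamma})_*[\fM_\Gamma]=[\fM_0]$, which is precisely the pure-degree-one hypothesis of \cite[\S5]{Cos} (equivalently of \cite{Man}).

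Next I would set up the obstruction-theoretic side. Put $\shK_\Gamma':=\shK_\Gamma\times_{\fM_\Gamma}\fM_\Gamma'$; by Cartesianness of \eqref{Cartesian-curve-moduli-split} this equals $\shK\times_{\fM_0}\fM_\Gamma'$ and also $\shK_\Gamma\times_{\ubk}\Spec(\kk[x]/(x^{l_\Gamma}))$, so that $\coprod_\Gamma\shK_\Gamma'=\shK\times_{\fM_0}\bigl(\coprod_\Gamma\fM_\Gamma'\bigr)$. Because the universal curve and universal map over $\shK$ pull back along these Cartesian squares, $(R\pi_*f^*\shT_{X/\bk})^\vee$ pulls back to a perfect obstruction theory for $\shK_\Gamma'$ relative to $\fM_\Gamma'$, compatibly with base change, as recorded in the paragraph preceding this Lemma (cf. \cite{BF}). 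Since $\fM_\Gamma'\ra\fM_\Gamma$ and $\shK_\Gamma'\ra\shK_\Gamma$ are finite flat of degree $l_\Gamma$ with all the data pulled back from the $\fM_\Gamma$-side, the relative intrinsic normal cone and the obstruction theory are pulled back and the relative virtual class scales by $l_\Gamma$: under the closed immersion $\shK_\Gamma\hra\shK_\Gamma'$ one has $[\shK_\Gamma'/\fM_\Gamma',\,\ldots]=l_\Gamma\,[\shK_\Gamma/\fM_\Gamma,\,\ldots]$, where in both brackets the omitted obstruction theory is the pullback of $(R\pi_*f^*\shT_{X/\bk})^\vee$.

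Finally I would invoke \cite[Theorem 5.0.1]{Cos} (or \cite[Proposition 5.29]{Man}) for $\mu'$: since the obstruction theory on $\coprod_\Gamma\shK_\Gamma'$ relative to $\coprod_\Gamma\fM_\Gamma'$ is pulled back from that on $\shK$ relative to $\fM_0$ and $\mu'$ has pure degree one, the theorem gives that the pushforward of $\sum_\Gamma[\shK_\Gamma'/\fM_\Gamma',\,\ldots]$ along $\coprod_\Gamma\shK_\Gamma'\ra\shK$ equals $[\shK/\fM_0,(R\pi_*f^*\shT_{X/\bk})^\vee]$. By the scaling identity above, together with the identification $A_*(\shK_\Gamma')=A_*(\shK_\Gamma)$ under which $\shK_\Gamma'\ra\shK$ becomes $\mu_\Gamma$, the left-hand side equals $\sum_\Gamma l_\Gamma(\mu_\Gamma)_*[\shK_\Gamma/\fM_\Gamma,\,\mu_\Gamma^*(R\pi_*f^*\shT_{X/\bk})^\vee]$, which is exactly the assertion. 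I expect the main obstacle to be verifying the hypotheses of \cite{Cos} in the relative, Artin-stack setting — that $\fM_0$ and the $\fM_\Gamma$ lie within its scope and that pure degree one holds on the nose rather than merely after passing to cycle classes — which is settled by reducing along the smooth presentations of Proposition~\ref{Prop_Normal}, via Lemma~\ref{Normal}, to the explicit toric local model $\Spec(\kk[Q]/(\one))$.
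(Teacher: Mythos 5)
Your proposal follows the same route as the paper: pass to the thickened stacks $\fM_\Gamma'=\fM_\Gamma\times_{\ubk}\Spec(\kk[x]/(x^{l_\Gamma}))$ so that $\mu'$ has pure degree one by Lemma~\ref{lemma-degree-of-mu}, then invoke \cite[Theorem 5.0.1]{Cos} (or \cite[Proposition 5.29]{Man}). You have additionally spelled out the scaling identity $[\shK_\Gamma'/\fM_\Gamma']=l_\Gamma[\shK_\Gamma/\fM_\Gamma]$ and the pullback compatibility of obstruction theories, which the paper leaves implicit, but this is the same argument.
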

Here and later, $[K/M, E ]$ (or sometimes simply $[K, E]$)
denotes the virtual fundamental class of a stack $K$ that is relative DM type over a pure dimensional algebraic stack $M$,
with respect to a relative perfect obstruction complex $E$ of $K$ over $M$ (see \cite{LG, BF, Kr}).


\section{Comparing perfect obstruction theories} \label{section-ob-theories}
In this section, we deliver the details for \S\ref{subsec-cycle-degen-formula}. 
Recall the forgetful maps of finite graph sets 
$$ \tilde\Omega(g, n,\beta) \ra \Omega(g, n,\beta) \ra \bar\Omega(g, n,\beta)  $$
where the first map forgets the edge ordering and the second map forgets the $\beta_V$.
\subsection{Gluing of log structures and moduli stacks} 
For $\bGamma\in \bar \Omega(g,n,\beta)$, recall $\mathscr{M}_{\bGamma}$ and $\foM_{\bGamma}$ from \eqref{def-KGamma}. 
We decompose $\mathscr{M}_{\bGamma}$ in the obvious way into open and closed substacks $\mathscr{M}_{\Gamma}$ that fix the classes $\beta_V$, i.e.,
\[ \mathscr{M}_{\bGamma} = \coprod _{\Omega(g,n,\beta)\ni\Gamma \mapsto \bGamma} \mathscr{M} _{\Gamma}.\]
Furthermore, we may consider the edge-labelled stack $\mathscr{M}_{\tGamma}$ where splitting nodes are marked. 
We have an \'etale forgetful map of the markings $\mathscr{M}_{\tGamma}\ra \mathscr{M}_{\Gamma}$. 
We similarly define $\fM_{\tilde{\Gamma}}$ with an \'etale forgetful map $\fM_{\tilde{\Gamma}}\ra \fM_{\Gamma}$.
  Let $\mu _{\tGamma}$ denote the composition 
 \[   \mathscr{M}_{\tGamma}\ra \mathscr{M}_{\bGamma} \xrightarrow{\coprod \mu _{\Gamma}}  \mathscr{M} . \]
As a direct consequence of Lemma~\ref{SplFund}, we arrive at the following. 
\begin{lemma} \label{SplFund2}  
$$\begin{array}{l} 
\sum _{\tGamma  \in \tilde{\Omega}(g, n,\beta) }  \frac{l_{\Gamma}}{|E(\tGamma)|!} (\mu _{\tGamma})_*
 [ \mathscr{M} _{\tGamma} / \fM_{\tGamma},  \mu _{\tGamma}^* (R \pi _{ *} f^* \shT_{X/\bk})^\vee ]  = [\mathscr{M} / \fM _0 ,  (R \pi _{ *} f^* \shT_{X/\bk})^\vee ]. 
\end{array} $$
\end{lemma}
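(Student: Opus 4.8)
The plan is to obtain the statement from Lemma~\ref{SplFund} by two essentially formal steps: first refining the index set from $\bar\Omega(g,n,\beta)$ to $\Omega(g,n,\beta)$, and then passing from $\Omega(g,n,\beta)$ to $\tilde\Omega(g,n,\beta)$ along the \'etale edge-marking maps $\shK_{\tGamma}\ra\shK_\Gamma$.

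First I would rewrite Lemma~\ref{SplFund}. Since $\shK_{\bGamma}=\coprod_{\Gamma\in\Omega(g,n,\beta),\ \Gamma\mapsto\bGamma}\shK_\Gamma$ is a decomposition into open and closed substacks, and since both $\fM_\Gamma$ (which only depends on $\bGamma$) and the integer $l_\Gamma=\lcm(\{w_e\})$ depend only on the image $\bGamma$, the relative virtual fundamental class is additive along this decomposition and the relative perfect obstruction theory of each $\shK_\Gamma/\fM_\Gamma$ is the restriction of the one for $\shK_{\bGamma}/\fM_{\bGamma}$. Hence Lemma~\ref{SplFund} can be reindexed as
$$\sum_{\Gamma\in\Omega(g,n,\beta)} l_\Gamma\,(\mu_\Gamma)_*\bigl[\shK_\Gamma/\fM_\Gamma,\ \mu_\Gamma^*(R\pi_*f^*\shT_{X/\bk})^\vee\bigr]=\bigl[\shK/\fM_0,\ (R\pi_*f^*\shT_{X/\bk})^\vee\bigr].$$

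Next I would introduce the edge-orderings. Fix $\Gamma\in\Omega(g,n,\beta)$ and put $k:=|E(\Gamma)|=|E(\tGamma)|$. The square relating $\shK_{\tGamma},\shK_\Gamma,\fM_{\tGamma},\fM_\Gamma$ is Cartesian, and the disjoint union $\coprod_{\tGamma\mapsto\Gamma}\fM_{\tGamma}\ra\fM_\Gamma$ is the $\mathfrak{S}_k$-torsor of orderings of the $k$ splitting nodes of the universal curve; it is therefore finite \'etale of total degree $k!$, the summands distributing according to $|\Aut(\Gamma)|/|\Aut(\tGamma)|$ as in \eqref{Gamma-tildeGamma}. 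Base-changing to $\shK_\Gamma$, the map $\coprod_{\tGamma\mapsto\Gamma}\shK_{\tGamma}\ra\shK_\Gamma$ is again finite \'etale of total degree $k!$; moreover, by Cartesianness the obstruction theory $\mu_{\tGamma}^*(R\pi_*f^*\shT_{X/\bk})^\vee$ on $\shK_{\tGamma}/\fM_{\tGamma}$ is pulled back from $\shK_\Gamma/\fM_\Gamma$, so the corresponding relative virtual classes are compatible under this \'etale base change, and pushing forward along $\shK_{\tGamma}\ra\shK_\Gamma$ multiplies by the degree. Summing over $\tGamma\mapsto\Gamma$ and composing with $\mu_\Gamma$ (recall $\mu_{\tGamma}=\mu_\Gamma\circ(\shK_{\tGamma}\ra\shK_\Gamma)$) yields
$$\sum_{\tGamma\mapsto\Gamma}(\mu_{\tGamma})_*\bigl[\shK_{\tGamma}/\fM_{\tGamma},\ \mu_{\tGamma}^*(R\pi_*f^*\shT_{X/\bk})^\vee\bigr]= k!\,(\mu_\Gamma)_*\bigl[\shK_\Gamma/\fM_\Gamma,\ \mu_\Gamma^*(R\pi_*f^*\shT_{X/\bk})^\vee\bigr].$$

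To finish, I would substitute this into the reindexed Lemma~\ref{SplFund}: the $\Gamma$-th summand $l_\Gamma(\mu_\Gamma)_*[\shK_\Gamma/\fM_\Gamma,\,\cdot\,]$ equals $\tfrac{l_\Gamma}{k!}\sum_{\tGamma\mapsto\Gamma}(\mu_{\tGamma})_*[\shK_{\tGamma}/\fM_{\tGamma},\,\cdot\,]$, and as $\Gamma$ ranges over $\Omega(g,n,\beta)$ and $\tGamma$ over its preimages the double sum becomes a single sum over $\tilde\Omega(g,n,\beta)$; using $k=|E(\tGamma)|$ this is exactly the claimed identity. The only points that are not bookkeeping are the two flagged above: the base-change compatibility of the relative virtual class $[\,\cdot/\cdot\,,\cdot\,]$ along the \'etale map $\fM_{\tGamma}\ra\fM_\Gamma$ (the standard functoriality recorded just after Lemma~\ref{SplFund}, together with Cartesianness), and the automorphism count making the degrees of the maps $\shK_{\tGamma}\ra\shK_\Gamma$ sum to $|E(\Gamma)|!$ — both are routine, but deserve a sentence of care.
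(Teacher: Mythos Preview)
Your argument is correct and is exactly the unpacking the paper has in mind: the paper gives no explicit proof, stating only that the lemma is ``distill[ed] as a direct consequence of Lemma~\ref{SplFund}'' after recording the decomposition $\shK_{\bGamma}=\coprod_{\Gamma\mapsto\bGamma}\shK_\Gamma$ and the \'etale forgetful maps $\shK_{\tGamma}\ra\shK_\Gamma$, $\fM_{\tGamma}\ra\fM_\Gamma$. Your two steps (reindex over $\Omega$, then pass to $\tilde\Omega$ via the degree-$|E(\Gamma)|!$ edge-ordering cover) are precisely this direct consequence written out.
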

For the remainder of the section, we fix $\Gamma\in \tilde{\Omega}(g, n,\beta)$, in particular, edges of the graph $\Gamma$ will be ordered from now on. 
Recall that for a vertex $V$ we defined $\mathscr{M}_V:= \mathscr{M}_{g_V,n_V\cup E_V} (X_{r(V)},\beta_V)$, i.e., the moduli stack of genus $g_V$ basic stable log maps to $X_{r(V)}$ of class $\beta_V$ with $n_V$ unconstrained markings and further markings indexed by $e\in E_V$ with contact order $w_e$ to $D$. 
We also consider the stack $\mathscr{M}^\circ_V:=  \mathscr{M}_{g_V, n_V \cup E_V}$ of prestable curves, see the beginning of \S\ref{section-decomp-curve-moduli}.

The diagram 
\begin{equation}
\vcenter{ \label{diag-gluing-moduli}
 \xymatrix{ \mathscr{M}_{\Gamma} \ar[d]_{\phi_{\Gamma}} \ar[r]^{w} & \fM _{\Gamma} \ar[rd]^{\mff} \\
    \bigodot_V \mathscr{M}_V \ar[r]_{w'\quad}   &   \prod _V    \cL  og_{  \mathscr{M}^\circ _V} \ar[r]_{\mfs\quad}       &     
      \cL og_{ \Pi _V  \mathscr{M}^\circ _V}  =:\fB
}}
\end{equation}
commutes, where $\phi_{\Gamma}$ is defined by the splitting construction of \S \ref{splitting}; $w$, $w'$ are morphisms that forget the targets of stable log maps; $\mff$ is defined by taking the homomorphism $\oplus _V \cM ^{\uC_V/\uS}_{\uS} \ra \cF ^{\uC/\uS}_{\uS}$ (i.e., taking only the ``facet data'') and finally $\mfs$ is defined by taking the sum $\oplus _V \cM ^{\uC_V/\uS}_{\uS} \ra \oplus _V \cM _{S_V}$.

Recall that $D$ carries the trivial log structure.
We define the $\cO _D$-module  $N_{D/X_{i}}$ by the exact sequence 
\begin{equation}  \label{log-normal-to-D}
0 \ra N_{D/X_{i}} \ra \shT_{X_i} |_{D} \ra \shT_{D} \ra 0 . 
\end{equation}
Note that $N_{D/X_{i}} $ is isomorphic to $\shO_D$ (unlike $N_{D/\uX_{i}}$).

\begin{lemma}\label{etale}
\begin{enumerate}

\item\label{wprime} $\mfs$ is \'etale.

\item\label{smooth} The morphism $\mff$ is of Deligne-Mumford-type and smooth.

\item\label{dis}  \[ w^* \LL ^{\vee}_{\fM_{\Gamma} /  \fB} \cong \phi _{\Gamma}^* \Bigg(\bigoplus _{e\in E(\Gamma)} \ev^*_e N_{D/X_{1}}\Bigg)  \] 
where $\fB=\cL og_{\prod _V {\mathscr{M}^\circ _V}}$.

\item\label{phi} The morphism $\phi_{\Gamma}$ is of Deligne-Mumford-type and \'etale of degree $\frac{\Pi _{e\in E(\Gamma)} w_e }{ l_{\Gamma} }$.
 Here, if $\Gamma$ has only one vertex, then we set $\Pi _{e\in E(\Gamma)} w_e  = 1$.

\end{enumerate}
\end{lemma}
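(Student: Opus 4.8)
The plan is to reduce each of the four assertions to Olsson's calculus of $\cL og$/$\cT or$ stacks \cite{Ol} together with the explicit toric charts of the monoids attached to $\Gamma$, using the splitting construction of \S\ref{splitting} and its inverse from \S\ref{gluing} as the two main engines. Throughout I would argue fppf- or \'etale-locally on the base and pass to an Olsson chart, where all the stacks in \eqref{diag-gluing-moduli} become (quotients of) affine toric pieces $\Spec\kk[Q]$.

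For \eqref{wprime}, I would recognise $\mfs$ as the ``amalgamate the log structures'' morphism: over a test scheme $\uT$, an object of $\fB$ is a single log structure $\cM$ with a map $\bigoplus_V\cM^{C_V/S}_S\ra\cM$, an object of $\prod_V\cT or_{\fM^\circ_V}$ is a tuple $(\cM_V)_V$ with maps $\cM^{C_V/S}_S\ra\cM_V$, and $\mfs$ sends $(\cM_V)_V$ to $\bigoplus_V\cM_V$ (the pushout over $\cO_S^\times$, which on ghost sheaves is the direct sum). On Olsson's \'etale charts, $\cT or_{\fM^\circ_V}$ is covered by pieces indexed by charts $Q_V$ of the log structure of $\fM^\circ_V$, the product is covered by the pieces indexed by $(Q_V)_V$, and the same data assembled into $\bigoplus_V Q_V$ --- a valid chart for the \emph{product} log structure on $\prod_V\fM^\circ_V$ --- covers $\fB$; since $\bigoplus_V Q_V$ and its summands carry the same groupifications and facet data (a direct sum of saturated monoids is saturated, and no normalisation intervenes), the induced morphism of charts is \'etale, whence $\mfs$ is \'etale. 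This is the step that confines the ``choice of root of unity'' ambiguity away from $\mfs$; it reappears in $\phi_\Gamma$.

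For \eqref{smooth} and \eqref{dis}, unwinding the definitions, $\fM_\Gamma$ remembers the glued curve $C$ with its marked splitting nodes (hence all the pieces $C_V$) and the facet/basic log data, whereas $\fB=\cT or_{\prod_V\fM^\circ_V}$ remembers only the tuple $(C_V)_V$ with the facet log structure $\cF^{C/S}_S$ of the pieces --- in which the $|E(\Gamma)|$ splitting nodes have become ordinary marked points and therefore contribute nothing to the log structure. The underlying map $\underline{\fM_\Gamma}\ra\prod_V\underline{\fM^\circ_V}$ is finite (a tuple of pointed curves glues back to a unique pointed curve, and the marking of the splitting nodes rigidifies this gluing, giving the Deligne--Mumford-type statement), so on charts the only relative deformation directions are the $|E(\Gamma)|$ coordinates $z^{q_e}$ of the splitting nodes, present in the chart of $\fM_\Gamma$ but absent in that of $\fB$ (Lemma~\ref{split-facet} identifying the facet $\cF^{C/S}_S$ with $\bigoplus_V Q_V$ inside the basic monoid); this yields $\mff$ smooth of relative dimension $|E(\Gamma)|$, so that $\LL^\vee_{\fM_\Gamma/\fB}$ is locally free of rank $|E(\Gamma)|$ in degree zero, with a rank-one summand $\cL_e$ --- the smoothing parameter of the $e$-th splitting node --- for each $e\in E(\Gamma)$. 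Pulling back along $w$ to $\shK_\Gamma$, where the universal curve carries a map to $X$ and hence evaluation maps $\ev_e:\shK_\Gamma\ra D$ exist, I would identify $\cL_e$ with $\ev_e^*N_{D/X_1}$ by the computation already carried out in the proof of Lemma~\ref{lemma-section-exists} (which rests on Lemma~\ref{lemma-unique-extension-torsor}): there the node torsor is pinned, up to the data over $S$, to a combination of $\ev_{e^1}^*\cO_{X_1}(-D)$, $\ev_{e^2}^*\cO_{X_2}(-D)$ and the two branch conormal bundles, and the log normal bundle $N_{D/X_1}\cong\cO_D$ of \eqref{log-normal-to-D} is exactly what survives; interchangeability of the indices $1,2$ uses that $\cO_{X_1}(-D)|_D$ and $\cO_{X_2}(-D)|_D$ are mutually dual, also from that proof.

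For \eqref{phi}, I would combine \S\ref{splitting}, which defines $\phi_\Gamma$ by the splitting construction, with \S\ref{gluing}, which inverts it: for a $\uT$-point of $\bigodot_V\shK_V$, i.e.\ compatible stable log maps to the pieces whose underlying curves glue along matching contact orders in $D$, the gluing construction produces a $\prod_e\mu_{w_e}$-torsor $\widehat S$ over $T$ carrying a glued basic stable log map to $X$ (Proposition~\ref{prop-count-choice-of-lift}), and that log map has automorphism group $\mu_l$ acting through the injection $\mu_l\hra\prod_e\mu_{w_e}$, the $l_e=l/w_e$ being coprime since $l=\lcm(\{w_e\})$. Hence the base change $T\times_{\bigodot_V\shK_V}\shK_\Gamma$ is $\widetilde S=\widehat S/\mu_l$, a torsor under $\prod_e\mu_{w_e}/\mu_l$, which in characteristic zero is a finite \'etale cover of degree $\prod_e w_e/l_\Gamma$ by \eqref{gluing-degree}; since this holds after an arbitrary base change, $\phi_\Gamma$ is representable and \'etale of that degree, the one-vertex case being covered by the empty-product convention. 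The step I expect to be the main obstacle is the identification in \eqref{dis}: not the rank count (that is \eqref{smooth}) but pinning down the \emph{canonical} isomorphism, since this requires transporting the log structure of $X$ through the node-smoothing charts and matching it on the nose with the conormal-bundle bookkeeping of Lemmas~\ref{lemma-unique-extension-torsor}--\ref{lemma-section-exists} --- making the relevant triangles commute strictly rather than merely up to isomorphism, which is precisely the kind of point the introduction flags.
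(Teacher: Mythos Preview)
Your treatment of \eqref{wprime}, \eqref{smooth} and \eqref{phi} is essentially aligned with the paper's, though the paper is terser: it dispatches \eqref{wprime} in one line via the formal \'etaleness lifting criterion, proves \eqref{smooth} by the formal smoothness lifting criterion on charts (your chart-by-chart deformation count is a valid variant), and refers \eqref{phi} directly to the gluing construction and \eqref{gluing-degree}, exactly as you do.

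The real divergence is in \eqref{dis}, and here the paper's argument is both different and simpler than what you propose. You aim to identify each summand $\cL_e$ canonically with $\ev_e^* N_{D/X_1}$ via the node-torsor calculations of Lemma~\ref{lemma-section-exists}, and you correctly anticipate that making this identification strict would be delicate. The paper sidesteps this entirely. It observes from \eqref{log-normal-to-D} that $N_{D/X_1}\cong\cO_D$ is \emph{trivial} (the log tangent sheaf $\shT_{X_1}$ restricted to $D$ surjects onto $\shT_D$ with one-dimensional trivial kernel), so the right-hand side of \eqref{dis} is a trivial bundle of rank $|E(\Gamma)|$. It therefore suffices to show that $\LL^\vee_{\fM_\Gamma/\fB}$ is also trivial of rank $|E(\Gamma)|$, and this is done by writing down an explicit $\cO_S$-basis of isomorphism classes of first-order liftings to $S[\epsilon]$: for each splitting node $e$, deform the chart map $j$ by $j_e(1_e)=j(1_e)+\epsilon$ while fixing the facet and all other nodes. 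No canonical matching of summands is needed, and the ``main obstacle'' you flag never arises. Your route via Lemma~\ref{lemma-section-exists} might be made to work, but it is considerably harder than what the statement actually requires, and the paper's later use of \eqref{dis} in \eqref{two_relatives} only needs the isomorphism of trivial bundles, with the compatibility of the right vertical arrow there established separately via the four-lemma.
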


\begin{proof}
For \eqref{wprime}: This is  straightforward by  the lifting criterion for formally \'etale morphisms.

For \eqref{smooth}: First note that $\mff$ is of DM-type since there is no infinitesimal automorphism $\sigma$ of a geometric point of $\fM _{\Gamma}$
with $\mff (\sigma ) = \mathrm{id}$. Now to prove $\mff$ is smooth, 
 it is enough to show $\mff$ is formally smooth since it is locally of finite presentation. 
 The corresponding lifting property of $\mff$ can be checked by considering
charts of log morphisms. Let $I$ be a nilpotent ideal of  a finitely generated ring $\Lambda$ over $\mathbf{k}$
and let $\uS=\Spec (\Lambda/I)$. We may assume that there is a chart 
\[ \xymatrix{ \overline{M}_{S}^{C/S}  \ar[rd] \ar[r] &  Q\ar[d] & \ar_{\one\mapsfrom 1}[l] \ar[ld] \NN \\
            &   \mathcal{O}_S & } \]    of $\cM ^{C/S}_{S} \ra \cM _S \leftarrow \NN \oplus \mathcal{O}_S^\times $.
 By Definition \ref{splStack}, there exists a unique lifting to $\Spec (\Lambda)$ of the log structure on $\cM _S$. It is also
 obvious that $\cM _{S}^{C/S}\ra \cM _S$ and $\cM _S \leftarrow \NN \oplus \mathcal{O}_S^\times$ have lifts, which may not be unique. 

For \eqref{dis}: Since $N_{D/X_1}$ is a trivial line bundle, 
it is enough to show that $\LL ^{\vee}_{\fM _{\Gamma} / \fB}$ is also a  trivial bundle
of rank $|E(\Gamma)|$. For this we will describe an $\cO _S$-basis of the set of {\em isomorphism} classes  
of liftings  to $S[\epsilon]:=\Spec (\cO_S[\epsilon]/\epsilon^2)$, fixing the log structure 
of the facet, of a given object $$(C/S, \NN\oplus\cO^{\times}_S\stackrel{h}{\longrightarrow}\cM_S\stackrel{\ j}{\longleftarrow}\cM^{C/S}_S)$$  of $\fM_{\Gamma}$ over any
scheme $\uS$.
For each $e\in E(\Gamma)$, we define a lifting
\begin{align}\label{lifting} (C[\epsilon]/S[\epsilon],
\NN \oplus(\cO_S[\epsilon])^{\times}\stackrel{h+\epsilon h'}{\longrightarrow}\cM_S[\epsilon]\stackrel{\ j_e}{\longleftarrow}\cM^{C/S}_S[\epsilon]) \end{align} as follows.
The monoid homomorphism of $h': \NN \ra \cO _S$ can be transformed to the trivial homomorphism up to a unique isomorphism of $\cM _S$ fixing $\cF _S$,
since $\cO _S$ is a divisible group, and $\oF$ and $\NN$ generate $\oM\ot_{\ZZ}\QQ$. 
For each splitting node $q$, there is the corresponding canonical submonoid sheaf 
$\shN_q\subset\cM^{C/S}_S$
satisfying $\shN_q \cong\NN_q\oplus\cO^{\times}_S$ by Lemma~\ref{lemma-section-exists} which deals with $\shL_{\one_q}\cong 1\oplus \cO^{\times}_S$.
Therefore $j_e$ is determined by $j_e(1_q)$ for all splitting nodes $q$. Define   $j_e(1_q)=j(1_q)$ for $q\ne e$, $j_e(1_q)=j(1_q)+ \epsilon$ for $q=e$.
We can check that $j_e$ is well-defined and the liftings \eqref{lifting} for $e\in E(\Gamma)$ form a basis of the set of the isomorphism classes of liftings.
We can do this construction for a smooth surjective cover $\uS \ra \fM_{\Gamma}$.

Finally, \eqref{phi} follows from the gluing construction in \S\ref{gluing} that gave the preimage $\tilde S$ of any scheme $S$ mapped to the target of $\phi_\Gamma$ and \eqref{gluing-degree} readily computes the degree of $\phi_\Gamma$.
\end{proof}

Let us fix $\Gamma\in\tilde\Omega(g, n,\beta)$ and for a vertex $V$ we define
\begin{equation} \label{def-MV}
\fM_V:=\cL og _{ \mathscr{M} _{g_V, n_V\cup E_V}}.
\end{equation} 
Note that we don't take the product with $\cL og_\bk$ like we did for $\fM$. 
Let $\fC_V$ be the universal curve over $\fM_V$, i.e., the pullback from $  \mathscr{M} _{g_V, n_V\cup E_V}$. 
The natural perfect obstruction theory on $\mathscr{M}_V$ is given by
$$
(R\pi_{V,*} f_V^* \shT_{X_{r(V)}/\ubk})^\vee\ra \LL_{\underline{\mathscr{M}}_V/\underline{\fM_V}}  
$$
where $\pi_V:\shC_V\ra\mathscr{M}_V$ is the universal curve (also the pullback from $ \mathscr{M} _{g_V, n_V\cup E_V}$) and $f_V:\shC_V\ra X_{r(V)}$ the universal map and 
$(\cdot)^\vee$ means taking $\shR\shH{om}(\cdot,\shO)$.
We take the outer tensor product of these obstruction theories on $\prod_V\mathscr{M}_V$ via
$$
\mathbf{E}  := \boxtimes _V ( R\pi _{V,*} f^*_V \shT_{X_{r(V)}/\ubk})^\vee  
\quad \ra  \quad
    \mathbf{L}  := \boxtimes _V \mathbb{L}_{\underline{\mathscr{M}_V} /  \underline{\fM_V}} 
\cong \mathbb{L}_{\Pi _V \underline{\mathscr{M}_V} / \Pi _V \underline{\fM_V}}
$$
which is the natural perfect obstruction theory on $\prod_V\mathscr{M}_V$  relative to $ \prod_V \fM_V$, e.g.,~by \cite[Proposition 5.7]{BF}.
The isomorphism $\mathbf{L} \cong \mathbb{L}_{\Pi _V \underline{\mathscr{M}_V} / \Pi _V \underline{\fM_V}}$ between cotangent complexes can be seen 
by the distinguished triangles 
associated to towers of projections starting from $\mathscr{M}_{V_1}\times _{\ubk} \mathscr{M}_{V_2} \ra 
\mathscr{M}_{V_1}\times _{\ubk}\fM_{V_2} \ra \fM_{V_1} \times _{\ubk} \fM_{V_2}$ and base changes.


\subsection{Tangent sheaves and morphisms} \label{subsec-virt-classes}
The universal curve $\pi : \uC_\odot \ra \bigodot _V\mathscr{M}_V$ is obtained via pushout from the $\uC_V$ as in \eqref{glue-diag}.
The universal curve $\cC_{\Gamma}$ of $\mathscr{M} _{\Gamma}$ as well as various other universal curves are defined by the following fibre product diagram in log stacks
\[ \xymatrix{ \cC_{\Gamma}   \ar[r] \ar[d] & \fC_{\fM_{\Gamma}} \ar[r] \ar[d] & \fC _{  \mathscr{M}_{g, n}} \ar[d] & \cC_{\mathscr{M}} \ar[l] \ar[d]  \\
                     \mathscr{M}_{\Gamma} \ar[r] & \fM _{\Gamma} \ar[r] & \cL og_{  \mathscr{M} _{g,n}}  & \mathscr{M} \ar[l]  
                     }\]
starting from the log universal curve $\fC _{  \mathscr{M} _{g, n}} $ of $ \cL og_{  \mathscr{M}_{g,n}} $.
The underlying universal curve $\uC_{\Gamma}$ of $\mathscr{M} _{\Gamma}$ fits into the fibre product diagram
\[ \xymatrix{ {\uC}_{\Gamma} \ar[r]^{\tilde{\phi}_{\Gamma}} \ar[d]_{\pi _{\Gamma}} & {\uC}_\odot \ar[d]_{\pi} \\
     \underline{\mathscr{M}} _{\Gamma} \ar[r]_{\phi_{\Gamma}\quad }  & \bigodot _V \underline{\mathscr{M}} _V \ar@/_/[u]_{\iota _{e}} , } \]
     which defines $\tilde{\phi}_{\Gamma}$ that we also denote $\tilde{\phi}$. 
On $\uC_{\Gamma}$ we consider the following diagram with exact triangles as rows that we will prove to be commutative:
\begin{equation}
\vcenter{ \label{big-exact-diagram}
\resizebox{.95\textwidth}{!}{
\xymatrix@!0{
\TT_{\shC_{\Gamma}/\fC_{\fM_{\Gamma}}}  
    \ar[rrrrrr] \ar@{->}[dddddd] \ar[ddrrrr] & & & & && \oplus _{V}  \TT_{\shC_{\Gamma}/\fC_{\fM_{\Gamma}}} |_{\underline\shC_{V,\Gamma}} \ar[rrrrrr]
       \ar[ddrrrr]  & 
           & & & && \oplus _{e}  \TT_{\shC_{\Gamma}/\fC_{\fM_{\Gamma}}} |_{\mathscr{M}_{e}}
                \ar[ddrrrr]  &&& \\ \\
                       &&&& f^*\shT_{X/{\bf k}} \ \ar@{^{(}->}[rrrrrr] \ar@{-->}[dd] & &&  &&& \oplus _{V}  f^*\shT_{X/{\bf k}} |_{\underline\shC_{V, \Gamma}}  \ar@{->>}[rrrrrr] \ar[dd] 
                       & &&& && \oplus _{e}  f^*\shT_{X/{\bf k}} |_{\mathscr{M}_e}\ar[dd] \\ \\
          &&&& \tilde{\phi}^* \shE \ \ar@{^{(}->}[rrrrrr] && 
          &&&& \tilde{\phi} ^*  \oplus _{V} \iota _{V, *} \tilde{u}_V^* f^*_V\shT_{X_{r(V)}/{\ubk}}     \ar@{->>}[rrrrrr] &&
          &&&& \tilde{\phi} ^* \oplus _{e} \iota _{e, *} \ev_e^*\shT_{D/{\ubk}}  \\ \\
               \tilde{\phi}^*   \pi^* \TT _{\bigodot _V \mathscr{M} _V/ \prod _V \fM _V} \ar@{-->}[rrrruu]  \ar[rrrrrr]  
               &&&& &&  \tilde{\phi}^*  \oplus _V \pi ^*u^*_V \TT_{\mathscr{M} _V/\fM _V}  \ar[rrrrrr] \ar[rrrruu] 
              &&& &&&  \tilde{\phi}^* \pi^*\TT_u [1] \ar[rrrruu]
}}}
\end{equation}

Here, $\TT_{A/B}$ denotes the relative tangent complex of a map $A\ra B$, i.e., the dual of the relative cotangent complex $\LL_{A/B}$. 
For a strict map of fine log stacks $A\ra B$, we have $\LL_{A/B}=\LL_{\underline{A}/\underline{B}}=\LL_{\underline{A}/\cL og_B}$ by \cite{Ol-cotangent} and the map underlying every occurrence of $\TT$ in the diagram is strict.
Pullbacks of $\TT$ are well-defined, see \cite{Ol-cotangent}. 
We refer to the universal map $\cC_{\Gamma} \ra X$ by $f$ and
$\underline \shC_{V, \Gamma}$ is the closed substack of  $\underline \shC_{\Gamma}$ 
corresponding to the universal curve $\shC_{V}$ of $\mathscr{M} _V$. 
 There is a natural map $\underline \shC_{V, \Gamma} \ra \underline \shC_{V}$, which gives a log structure on
 $\uC_{V, \Gamma}$ by pullback. This will be denoted by $\shC_{V, \Gamma}$.
The image of the section $\underline{\mathscr{M} _{\Gamma}}\hra \ushC_{\Gamma}$ associated to the node $e$ is denoted by $\mathscr{M}_{e}$.

We now explain how to construct the diagram.
Let $\ushC_{V, \odot}$ denote the universal curve on $\bigodot _V \mathscr{M} _V$ corresponding to $\cC _V$. 
We denote by $\iota_V:\ushC_{V, \odot}\ra\ushC_\odot$ and $\iota_e:   \bigodot \mathscr{M} _V \ra\ushC_\odot$ the closed immersions attached to $V$ and $e$.

\begin{itemize}

\item 
The first two lines and the morphisms between them are obtained from the composition
$\TT_{\cC_{\Gamma} / \fC_{\fM _{\Gamma}}} \ra \TT _{\cC _{\Gamma}/\fM _{\Gamma}} \ra f^*\cT _{X/\bfk}$ 
by tensoring with the partial normalization exact sequence 
$$0\ra \shO_{\uC_{\Gamma}}\ra \bigoplus_V\iota _{V, *} \shO_{\uC_{V, \Gamma}} \ra \bigoplus_e \iota_e\shO_{e} \ra 0.$$

\item 
The maps $u_V,\tilde u_V$ are defined by the commuting diagram
     \[ \xymatrix{ \cC_{\Gamma}\ar[d]_{\tilde{\phi}} \ar@/^2pc/[rrr]_{f} & \ar[l]^{\iota _{V, \Gamma}} \cC_{V, \Gamma} \ar[d]_{\tilde{\phi}_V} & &  X \ar[r] & \uX    \\
     \cC_{\odot} \ar@/_1pc/[dr]_{\pi}  & \cC_{V, \odot} \ar[l]_{\iota _{V}} \ar[r]^{\tilde{u}_V} \ar@/^2pc/[d]^{\pi_V} & \cC_V \ar@/^2pc/[d]^{\pi_V} \ar[r]^{f_V}  & X_{r(V)} \ar[r] & \uX_{r(V)}\ar@{^{(}->}[u] \\
                       &      \ar[ul]_{\iota _e} \bigodot \mathscr{M}_V \ar[r]_{u_V} \ar[u]_{\iota _{V, e}} & \mathscr{M} _V \ar[u]_{\iota _{V, e} }         \ar[r]  &     D  \ar[ur]_{\iota_D}&   } \]
                       with the squares Cartesian and $\pi _V = \pi \circ \iota _V$. Here we double use notation for $\iota _{V,e}$ and $\pi_V$.

\item The forgetful morphisms $(D,\shM_{X}|_{D})\ra (D,\shM_{X_i}|_{D}) \ra D$ induce epimorphisms 
$\shT_{X/\bk}|_{D} \xrightarrow{\sigma_i} \shT_{X_i/\ubk}|_{D} \xrightarrow{\tau_i} \shT_{D/\ubk}=\shT_{D} $. 
On ${\ushC_\odot}$, we consider the sheaf epimorphism
\begin{equation}\\
\begin{array}{ccc}\label{Def_E_Ker}  
  \oplus_V \iota _{V,*} \tilde{u}_V^* f_V ^* \shT_{X_{r(V)}/\ubk}  &  \ra &  \oplus _e \iota _{e,*} \ev^*_e \shT_{D/\ubk} \\
  ( \xi_V )_V & \mapsto &  \sum _e  \tau_1(\xi _{V_1(e)}|_{D}) -  \tau_2(\xi _{V_2(e)}|_{D})  \end{array} 
\end{equation}
where $V_i(e)$ refers to the vertex $V$ of $e$ with $r(V)=i$.
We define $\cE$ as the kernel of  \eqref{Def_E_Ker}. This explains the third line of \eqref{big-exact-diagram} before taking pullback under $\tilde{\phi} :
\uC _{\Gamma} \ra \uC_{\bigodot}$. Now
the right two vertical morphisms 
from the second line to the third line are obtained from $f^*T_{X/\bfk}|_{\shC_{V,\Gamma}}  \ra \tilde{\phi}^*\iota _{V, *} \tilde{u}_V^* f_V^*T_{X_{r(V)}}$ and the adjunction transformation
$\id \Rightarrow  \iota _{e, *} \iota _{e}^* $. These two vertical morphisms uniquely determine the dashed arrow $f^*\cT_{X/\bfk} \dashrightarrow  \tilde\phi^*\cE $.

\item Recall $u$ from \eqref{gluediag}.
The bottom row of \eqref{big-exact-diagram} is obtained by applying $\tilde\phi^*\pi^*$ to the natural exact triangle
$$\TT _{\bigodot _V \mathscr{M} _V/ \prod _V \fM _V}\ra u^* \boxplus  _V  \TT _{\mathscr{M}_V /\fM _V}  \ra  \TT _{u} [1].$$
\item The bottom central diagonal up-arrow is $\tilde\phi^*$ applied to the composition
\begin{align*} 
\pi^*u_V^*\TT_{\mathscr{M} _V/\fM _V}\ra \iota_{V,*}\underbrace{\iota_V^*\pi^*}_{\pi_V^*}u_V^*\TT_{\mathscr{M} _V/\fM _V}&=\iota_{V,*}\tilde u_V^*\pi_V^*\TT_{\mathscr{M} _V/\fM _V}\\[-4mm]
&=\iota_{V,*}\tilde u_V^*\TT_{\cC_V/\cC_{\fM_V}}\ra \iota _{V, *} \tilde{u}_V^* f^*_V\shT_{X_{r(V)}/{\ubk}}
\end{align*} 
where the last map is $df_V$ and the second ``='' follows from the flatness of $\pi_V$ as it implies that the natural pullback morphism is an isomorphism.
\item The bottom right diagonal up-arrow is $\tilde\phi^*\pi^*$ of the natural map $\TT _{u} [1]\ra (\prod _e \ev_e)^* \TT _{\Delta} [1]$ (see \eqref{gluediag}) composed with the isomorphism $\TT _{\Delta} [1]\cong \oplus_e \cT _D$ and adjunction 
$$\pi^*\oplus_e \ev_e^*\cT_D\ra \oplus_e\iota_{e,*}\iota_e^*\pi^* \ev_e^*\cT_D=\oplus_e\iota_{e,*}\ev_e^*\cT_D.$$

\item The left long vertical map is the natural map $\TT_{\mathscr{M}_{\Gamma}/{\fM_{\Gamma}}}\ra\phi_\Gamma^* \TT_{\mathscr{M}_{\odot}/\fB}=\phi_\Gamma^*\TT_{\mathscr{M}_{\odot}/\Pi_V\fM_V}$ from \eqref{diag-gluing-moduli} using that $\mfs$ is \'etale (by Lemma~\ref{etale}, \eqref{wprime}), then applying $\pi^*$ and using flatness of $\pi$ to have $\TT_{\shC_{\Gamma}/{\fC_{\fM_{\Gamma}}}}=\pi^*\TT_{\mathscr{M}_{\Gamma}/{\fM_{\Gamma}}}$.
The commutativity of the hexagon that includes this map follows from the commutativity of \eqref{split-commutes}.

\item We prove the commutativity of the bottom right square.
From \eqref{gluediag}, we have a commuting diagram of cotangent complexes
\begin{equation}
\vcenter{ \label{for-bottom-right-square-commute}
 \xymatrix{ 
 \pi^* u^*\boxtimes _e  \ev_{e^{1,2}}^* \cT _{D\times D} \ar[r] &  \pi^* (\prod _e \ev_e)^* \TT _{\Delta} [1]  \ar[r]_{\quad\op{adj}} &  \oplus _e \iota_{e, *} \ev_e^* \cT_D   \\
 \pi^* u^* \boxtimes _V  \TT_{\mathscr{M}_V /\fM_V} \ar[r] \ar[u]  &    \pi^*  \TT_{u} [1] \ar[u]  &                           
} }
\end{equation}
where $\op{adj}$ is obtained by adjunction of $\pi^*\Rightarrow \iota _{e, *} \iota _e^* \pi^* = \iota _{e,*}$. 
By construction of the right diagonal up-arrow, the composition map  $\pi^* u^*\boxtimes _V  \TT_{\mathscr{M}_V /\fM_V} \ra \oplus _e \iota_{e, *} \ev_e^* \cT_D$ coincides with the the composition of the bottom horizontal map with this diagonal map. We need to show that this agrees with the other path that goes diagonal first and then horizontal after. To achieve this, we will factor the left vertical arrow in \eqref{for-bottom-right-square-commute}. We rewrite the top left corner using
$$\boxtimes _e  \ev_{e^{1,2}}^* \cT _{D\times D}=\boxtimes _{V\in e} \ev_{V,e}^*\shT_D= \boxtimes _{V\in e} \iota_{V,e}^*f_V^*\iota_{D,*}\shT_D$$
and consider the commutative diagram
\begin{align}\label{diff1}  
\vcenter{
\xymatrix{
f_V^*\cT _{X_{r(V)}} \ar[r]
  &  \iota_{V,e, *}\overbrace{\iota^*_{V,e} f_V^*\iota_{D,*}}^{\ev_{V,e}^*}\iota_D^*\cT _{X_{r(V)}}\ar^{\quad \quad \tau_{r(V)}}[r] 
  & \iota_{V,e, *}\ev_{V,e}^*\cT _{D}  \\ 
\TT _{\cC _V/\fC_{\fM_V}}  \ar[u]_{df_V} \ar[r]
   &   \iota_{V,e, *} \TT _{\mathscr{M}_V /\fM _V}  \ar[ru]_{ d\ev_{V,e}} \ar^{\iota_{V,e, *}\iota^*_{V,e}df_V}[u] } }
\end{align}
where the bottom left horizontal arrow is adjunction $\id\Rightarrow \iota _{V, e, *}\iota^* _{V, e}$ combined with
$\TT _{\cC _V/\fC_{\fM_V}} = \pi^*_V \TT _{\mathscr{M}_V /\fM _V}$ and $\iota^* _{V, e}\pi_V^*=\id^*$; the map $\tau_{r(V)}$ was defined above \eqref{Def_E_Ker}.
Since the left vertical map in \eqref{diff1} gives the bottom central diagonal arrow in \eqref{big-exact-diagram} and $\tau$ the horizontal one, we are done verifying the commutativity of the bottom right square.

\item The dashed diagonal arrow in \eqref{big-exact-diagram} is the unique map making the left right bottom corner commutative by the axioms of triangulated category applied to the bottom two rows as triangles and Lemma~\ref{Lemma:com_mono}.

\end{itemize}

\begin{lemma}\label{Lemma:com_mono}
Let $A\ra B$ be a monomorphism in an abelian category $\cA$ with enough injectives.
Let $C^{\bullet}$ be a complex in $\cA$ with $C^i=0$ for all $i <0$. Then the induced homomorphism 
$\Hom _{D^+(\cA )} (C^{\bullet}, A) \ra \Hom _{D^+(\cA )} (C^{\bullet}, B) $ is monic. 
\end{lemma}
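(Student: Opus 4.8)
The plan is to reduce the statement to a vanishing of negative-degree Ext groups and to use an injective resolution. First I would choose an injective resolution $B \hookrightarrow I^\bullet$ with $I^i = 0$ for $i < 0$; since $A \hookrightarrow B$ is a monomorphism, I can choose $I^\bullet$ together with a subcomplex resolution so that $A$ also injects into the same kind of object, but the cleaner route is to use the long exact sequence: set $D := B/A$, so that $0 \to A \to B \to D \to 0$ is a short exact sequence in $\cA$. Applying $\Hom_{D^+(\cA)}(C^\bullet, -)$ gives a long exact sequence, and the kernel of $\Hom_{D^+(\cA)}(C^\bullet, A) \to \Hom_{D^+(\cA)}(C^\bullet, B)$ is a quotient of $\Hom_{D^+(\cA)}(C^\bullet, D[-1]) = \Ext^{-1}(C^\bullet, D)$. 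So it suffices to show this $\Ext^{-1}$ vanishes.

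Next I would compute $\Ext^{-1}(C^\bullet, D)$ using that $C^\bullet$ is concentrated in non-negative degrees and $D$ is a single object placed in degree $0$. Choosing an injective resolution $D \to J^\bullet$ with $J^i = 0$ for $i<0$, the group $\Hom_{D^+(\cA)}(C^\bullet, D[-1])$ is computed by the complex $\Hom^\bullet(C^\bullet, J^\bullet)$ shifted, and in degree $-1$ this is a subquotient of $\bigoplus_{q - p = -1} \Hom(C^p, J^q)$. Since $C^p = 0$ for $p < 0$ and $J^q = 0$ for $q < 0$, the condition $q = p - 1$ forces $p \ge 1$ and $q \ge 0$, which is fine — so I need the more careful observation that a morphism in $D^+(\cA)$ from $C^\bullet$ to $D[-1]$ is represented by an actual chain map $C^\bullet \to J^\bullet[-1]$, i.e.\ a collection of maps $C^p \to J^{p-1}$; but $J^{p-1} = 0$ when $p = 0$, and for $p \ge 1$ there is no constraint from degrees alone. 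The real point is that $\Hom_{D^+}(C^\bullet, D[-1])$ classifies extensions / only sees $H^*(C^\bullet)$ in a range, and since $D[-1]$ has cohomology only in degree $1$ while any map from $C^\bullet$ must factor through the truncation $\tau_{\ge 1} C^\bullet$; I would instead argue directly: in $D^+(\cA)$ one has $\Hom(C^\bullet, D[-1]) = \Hom(\tau_{\le 1}C^\bullet, D[-1])$ and then, since $D[-1]$ lives in degree $1$ and $C^\bullet$ in degrees $\ge 0$, a spectral sequence / $t$-structure truncation argument shows this group can be nonzero only through $H^0(C^\bullet)$, but a negative-shift obstructs even that. I expect the slick formulation to be: there are no negative maps because $\Hom_{D^+}(\mathcal{C}^{\ge 0}, \mathcal{D}^{= 1})$ with the target in cohomological degree $1$ and the source bounded below by $0$ admits no contributions in the relevant spot.

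Actually the cleanest approach, which I would ultimately write, avoids $D$ entirely: represent a class $\xi \in \Hom_{D^+(\cA)}(C^\bullet, A)$ mapping to $0$ in $\Hom_{D^+(\cA)}(C^\bullet, B)$ by a roof $C^\bullet \xleftarrow{s} \widetilde C^\bullet \xrightarrow{g} A$ with $s$ a quasi-isomorphism; since $A \hookrightarrow B$ is injective, the composite $\widetilde C^\bullet \to A \hookrightarrow B$ is null-homotopic as a map in $D^+(\cA)$, hence (after possibly refining $\widetilde C^\bullet$) null-homotopic as a chain map, via a homotopy $k^\bullet$ with $k^i : \widetilde C^i \to B^{i-1}$. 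Because the target of $g$ is concentrated in degree $0$ and $\widetilde C^i = 0$ for $i<0$, the only relevant homotopy component is $k^0 : \widetilde C^0 \to B^{-1} = 0$, hence $k = 0$, so $g = 0$ already as a chain map into $B$, and therefore the composite $\widetilde C^\bullet \to A$ composed with the monomorphism $A \hookrightarrow B$ is literally zero, which forces $g = 0$ into $A$ too (monomorphism), hence $\xi = 0$.

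The main obstacle is book-keeping about where the homotopy lives: one must be careful that after replacing $\widetilde C^\bullet$ by something quasi-isomorphic one can still assume it is concentrated in degrees $\ge 0$ (true, by truncating, since $C^\bullet$ is), and that a null-homotopy in $D^+(\cA)$ really can be realized as an honest chain homotopy once the source is a complex of projectives or, dually, after passing to an injective resolution of the target; here it is cleanest to replace $B$ (and compatibly $A$) by injective resolutions and note the monomorphism $A \hookrightarrow B$ extends to a termwise-split monomorphism of injective resolutions, so that the degree-$(-1)$ term of the resolution of $B$ still vanishes and the homotopy argument goes through verbatim. I expect that step — arranging the resolutions compatibly while keeping everything concentrated in non-negative degrees — to be the only non-formal point, and it is standard.
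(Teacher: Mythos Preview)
Your ``cleanest approach'' contains a genuine gap. You claim that for a null-homotopy $k$ of the composite $\widetilde C^\bullet \to A \hookrightarrow B$, ``the only relevant homotopy component is $k^0 : \widetilde C^0 \to B^{-1} = 0$,'' and conclude $k=0$. This is false: the homotopy also has a component $k^1 : \widetilde C^1 \to B^{0}=B$, and the relation in degree $0$ reads $g^0 = k^1 d_{\widetilde C}^0$, not $g^0 = 0$. For a concrete failure take $\cA = \mathrm{Ab}$, $A = \ZZ \hookrightarrow B = \QQ$, $\widetilde C^\bullet = [\ZZ \xrightarrow{2} \ZZ]$ in degrees $0,1$, and $g^0 = \id_\ZZ$: the composite into $\QQ$ is null-homotopic via $k^1 = \tfrac12$, yet $g$ is not null-homotopic as a map to $\ZZ$. (This does not contradict the lemma, because $\ZZ$ is not injective so $\Hom_K\neq\Hom_{D^+}$; it just shows your degree-counting is wrong.) Passing to injective resolutions $I^\bullet \hookrightarrow J^\bullet$ does not rescue the argument: now $J^\bullet$ lives in all degrees $\ge 0$, so the components $k^i : C^i \to J^{i-1}$ for $i\ge 1$ are manifestly present, and ``the homotopy argument goes through verbatim'' is simply not true.

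Your abandoned first approach is closer but also misfires: $\Hom_{D^+}(C^\bullet, D[-1])$ need \emph{not} vanish (take $C^\bullet = D[-1]$, which is concentrated in degree $1\ge 0$). The paper's proof sets things up as you eventually do---replace $A\hookrightarrow B$ by a termwise monomorphism $I^\bullet\hookrightarrow J^\bullet$ of injective resolutions and reduce to showing $\Hom_K(C^\bullet,I^\bullet)\to\Hom_K(C^\bullet,J^\bullet)$ is monic---but the actual check is not the degree argument you give. One clean way to finish: from the truncation triangle $H^0 C^\bullet \to C^\bullet \to \tau^{\ge 1}C^\bullet$ and the $t$-structure vanishing $\Hom_{D^+}(\tau^{\ge 1}C^\bullet, A) = 0$, one gets $\Hom_{D^+}(C^\bullet, A) \hookrightarrow \Hom_{\cA}(H^0 C^\bullet, A)$, and then the monomorphism $A\hookrightarrow B$ in $\cA$ does the rest.
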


\begin{proof} Let $\Kom (\cA)$ be the category of cochain complexes of $\cA$ and let $K(\cA)$ be the homotopy category of $\Kom (\cA)$.
First we find injective resolutions $I^{\bullet}$, $J^{\bullet}$ of $A$, $B$ respectively,
replacing $A \ra B$ by $I^\bullet \ra J^{\bullet}$ in $\Kom(\cA)$ with monomorphisms $I^i \ra J^i$ for all $i\ge 0$.
We have $$\Hom _{D^+(\cA )} (C^{\bullet}, A)  = \Hom _{K(\cA)} (C^{\bullet}, I^{\bullet} )  \ra 
 \Hom _{K(\cA)} (C^{\bullet}, J^{\bullet} ) = \Hom _{D^+(\cA )} (C^{\bullet}, B)  $$
 which is easily checked to be monic. 
\end{proof}

\subsection{Virtual Fundamental classes}
\label{subsec-virt-fund-cl}
Recall diagram \eqref{gluediag}. We define a natural perfect obstruction theory on $\bigodot _V \mathscr{M}_V$ relative to
$ \Pi _V \fM_{V}$ as follows. The virtual class
$[ \mathscr{M}_V ,  (R \pi _{V *} f^*_V \shT_{X_{r(V)}})^\vee]$
is obtained from the perfect obstruction theory
that comes from a chain of exact functors
\begin{eqnarray} (R\pi _* f^*_V\shT_{X_{r(V)}/\ubk})^\vee & \cong &  R\pi _* (f^*_V\shT_{X_{r(V)}/\ubk} ^\vee \ot \omega _{\pi} [1] )   \nonumber
\\ & \to & R\pi _* (\LL _{C_V/C_{\fM _V}} \ot \omega_{\pi} [1])        \nonumber 
\\ & \cong &  R\pi _* (L\pi ^* \LL _{\mathscr{M}_V/\fM _V} \ot \omega_{\pi} [1])   \label{def-functors} 
\\ & \cong &  \LL _{\mathscr{M}_V/\fM _V} \ot R\pi_* \omega _{\pi} [1]   \nonumber
\\ & \ra &  \LL_{\mathscr{M}_V/\fM _V}  \nonumber
\end{eqnarray}
where the first one is Grothendieck duality, the third one uses the fact that universal curve $\pi$ is flat, the fourth one is the projection formula,
and the last one is the trace map. We may think of \eqref{def-functors}, under the sequence of exact functors, as an output for a map of a pair $\shT_{X_{r(V)}/\ubk} \leftarrow \pi^*  \TT_{\mathscr{M}_V/\fM _V}$ as an input.

Note that $\omega_{\pi_V} =\iota_V^*\omega_\pi$, so if we apply the chain \eqref{def-functors} of exact functors for 
three pairs in the last two lines in diagram \eqref{big-exact-diagram},
we obtain a commuting diagram of exact triangles
\begin{equation}\label{comp_dist} 
\vcenter{
\xymatrix{ (\Pi _e \ev_e)^* \mathbb{L}_{\Delta} [-1] \ar[r]\ar[d] & u^* \mathbf{E}  \ar[r] \ar[d]
          & (R\pi _* \shE )^\vee \ar[d] \\
          \mathbb{L}_{u} [-1] \ar[r] & u^*  \mathbf{L}       \ar[r] 
          & \mathbb{L}_{\bigodot_V \mathscr{M}_V / \Pi _V \fM _V} } }
\end{equation}
where, by construction, the left vertical map is the natural one coming from the Cartesian square \eqref{gluediag}.
Also as in the construction, we use 
$$(R \pi_* (\oplus _e \iota _{e,*} \ev^*_e \shT_{D/\ubk}))^\vee\cong (\oplus _e \ev^*_e \shT_{D/\ubk} )^\vee \cong (\Pi _e \ev_e)^* \mathbb{L}_{\Delta} [-1].$$
Since the left vertical arrow in \eqref{comp_dist} is surjective at $h^0$, by the two four-lemmas (that are part of the usual five-lemma), $h^0$ of the right vertical arrow in \eqref{comp_dist} is an isomorphism and $h^{-1}$ is surjective. 
Thus, the right vertical arrow in \eqref{comp_dist} is an obstruction theory (see \cite[Def 4.4]{BF}).
We claim it is a perfect obstruction theory, i.e., $(R\pi_*\shE)^\vee$ is locally quasi-isomorphic to a complex of free sheaves in degree $-1$ and $0$. 
Equivalently, $R\pi_*\shE$ is locally quasi-isomorphic to $E_0\ra E_1$ (for $E_i$ free and in degree $i$). 
Indeed, we can take \eqref{Def_E_Ker} as a resolution for $\shE$, call this $F_0\ra F_1$.
We can replace $F_0\ra F_1$ by $[E_0\ra E_1]:=[F_0(\sum_j D_j)\ra F_1\oplus \bigoplus_j F_0|_{D_j}]$ for $D_j$ suitably chosen local sections of $\pi$ with $\sum_j D_j$ relatively ample so that $R\pi_*E_i$ is locally free and concentrated in degree $0$ for $i=0,1$, hence giving the perfectness.

By the functoriality of \cite[Proposition 5.10]{BF}, we conclude from \eqref{comp_dist}  that
\begin{align}\label{DeltaFund}            [\bigodot_V \mathscr{M}_V , (R\pi _*\shE)^\vee ] 
& = \Delta ^{!} \prod _V [ \mathscr{M}_V ,  (R \pi _{V *} f^*_V \shT_{X_{r(V)}})^\vee].
\end{align}

Now focus on the middle two lines in \eqref{big-exact-diagram} and note that $\shT_{X/\bk}|_{\uX_i}\ra\shT_{\uX_i/\ubk}$ is an isomorphism and so the kernel of the right vertical map is
$ \oplus _e \iota _{e,*} \ev_e^*N_{D/X_1}$ by the definition of $N_{D/X_1}$ in \eqref{log-normal-to-D}. The snake-lemma for this $2\times 3$-diagram gives the cokernel of the left vertical arrow, that is,
     there is a natural exact sequence 
     \[     0 \ra f^* \shT_{X/\mathbf{k}} \ra \tilde{\phi}^* \mathcal{E}  \ra \tilde{\phi}^*( \oplus _e \iota _{e,*} \ev_e^*N_{D/X_1}) \ra 0.\] 
If we apply the chain \eqref{def-functors} of exact functors for 
the left trapezoid in diagram \eqref{big-exact-diagram},
we obtain a commuting diagram of exact triangles
\begin{equation}
\vcenter{\label{two_relatives}  \xymatrix{ 
  \phi ^* ( R \pi _* \shE)^\vee \ar[r]  \ar[d] &  
  \mu _{\Gamma}^* \Big(R\pi _{*} f^* \shT_{X/\bk}\Big)^\vee \ar[r] \ar[d] &  
  \phi^* \Bigg(\bigoplus _{e\in E(\Gamma)} \ev^*_e N_{D/X_{1}}\Bigg)^\vee [1] \ar[d]  \\
  \LL _{\bigodot \mathscr{M}_V/\prod _V \fM _V} \ar[r]  &  \LL_{\mathscr{M} _{\Gamma}/\fM_{\Gamma}} \ar[r]  &   w^* \LL_{\fM_{\Gamma}/\prod _V \fM _V}[1].  }
}
\end{equation}

By Lemma~\ref{etale},\eqref{dis}, the top right corner is isomorphic to
$w^* \LL_{\fM_{\Gamma} /  \fB}[1]$ and since $\LL_{\prod _V \fM _V/\fB}=0$ by Lemma~\ref{etale},\eqref{wprime}, for the right vertical map to be isomorphic to the pullback of the natural map 
$\LL_{\fM_{\Gamma} /  \fB}\ra \LL_{\fM_{\Gamma}/\prod _V \fM _V}$, we need to prove that it is an isomorphism. 
Since the other two vertical arrows are perfect obstruction theories, the 4-lemma gives that the right vertical map is surjective. 
However, a surjective map of free sheaves of the same rank is an isomorphism (as this can be checked \'etale locally where they are projective).

Applying functoriality \cite[Proposition 5.10]{BF} to \eqref{two_relatives}, we conclude
\begin{equation}\label{BaseFund} 
[\mathscr{M}_{\Gamma}   / \Pi _V \fM _V ,  \phi ^*_{\Gamma} ( R \pi _* \shE)^\vee ] 
 = [\mathscr{M}_{\Gamma} / \fM _{\Gamma}, \mu _{\Gamma}^* (R \pi _{*} f^* \shT_{X/\bk})^\vee]. 
\end{equation}
Also, by a special case of functoriality for the \'etale map $\phi_\Gamma$,
\begin{align}\label{GlueFund}  [\mathscr{M}_{\Gamma}   / \Pi _V \fM_V ,  \phi^*_{\Gamma} ( R\pi _* \shE)^\vee ] 
 = \phi ^*_{\Gamma} [ \bigodot _V \mathscr{M}_V , (R \pi _* \shE)^\vee ] . 
\end{align}

\begin{proof}[Proof of Theorem~\ref{mainthm-cycle}] 
\label{mainproof}
The result is the composition of the identities Lemma~\ref{SplFund2}, \eqref{BaseFund}, \eqref{GlueFund} and \eqref{DeltaFund}.
\end{proof}

\noindent\rule{4cm}{0.4pt}

Conflict of Interest: None.

All data generated or analysed during this study are included in this published article.



\begin{thebibliography}{9999}

\bibitem{AC} D. Abramovich and Q. Chen, \emph{Logarithmic stable maps to Deligne-Faltings pairs II},  Asian J. Math. 18 (2014), no. 3, 465--488.

\bibitem{ACGS} D.~{Abramovich}, Q.~{Chen}, M.~{Gross}, and B.~{Siebert}, \emph{{Decomposition of Degenerate Gromov-Witten Invariants}}, arXiv:1709.09864 [math.AG].

\bibitem{ACGS2} D.~{Abramovich}, Q.~{Chen}, M.~{Gross}, and B.~{Siebert}, \emph{{Punctured Logarithmic Maps}}, \href{https://arxiv.org/abs/2009.07720}{arxiv:2009.07720}

\bibitem{AF} D. Abramovich and B. Fantechi, \emph{Orbifold techniques in degeneration formulas}, arXiv:1103.5132 [math.AG].

\bibitem{AMW} D.~Abramovich, S.~Marcus, J.~Wise,
\emph{Comparison theorems for Gromov-Witten invariants of smooth pairs and of degenerations}, Annales de l'institut Fourier, Volume 64 (2014) no. 4, p. 1611--1667.

\bibitem{AOV} D. Abramovich,  M. Olsson, and A. Vistoli, \emph{Twisted stable maps to tame Artin stacks}, J. Algebraic Geometry, {\bf 20} (2011), 399--477.

\bibitem{Bou} P. Bousseau, \emph{Tropical refined curve counting from higher genera and lambda classes}, Invent. Math. {\bf 215}(1) (2019), 1--79.

\bibitem{Bou2} P. Bousseau, \emph{The quantum tropical vertex}, Geom. Topol. {\bf 24} (2020), 1297--1379.

\bibitem{BF} K. Behrend and B. Fantechi, \emph{The intrinsic normal cone}, Invent. math.  {\bf 128}, 45--88 (1997).	

\bibitem{BLSZ} B. Chen, A.-M. Li, S. Sun and G. Zhao, \emph{Relative orbifold Gromov-Witten theory and degeneration formula}, arXiv:1110.6803, 49 p.

\bibitem{ChE} Q. Chen, \emph{The degeneration formula for logarithmic expanded degenerations}, J. Algebraic Geom. {\bf 23} (2014), 341--392. 

\bibitem{Ch} Q. Chen, \emph{Logarithmic Stable Maps to Deligne-Faltings Pairs I}, Ann. of Math. (2) 180 (2014), no. 2, 455--521.


\bibitem{FFR} S. Felten, M. Filip, H. Ruddat, \emph{Smoothing toroidal crossing spaces}, to appear in Forum Math. Pi, arxiv:1908.11235 [math.AG]

\bibitem{GGR} M. van Garrel, T. Graber, H. Ruddat, \emph{Local {G}romov-{W}itten invariants are log invariants}, Adv. Math. {\bf 350} (2019), 860--876.
     
\bibitem{GS} M. Gross and B. Siebert,  \emph{Logarithmic Gromov-Witten invariants}, 
J. Amer. Math. Soc.  {\bf 26} (2013), no. 2, 451--510. 

\bibitem{Gr} T. Gr\"afnitz, \emph{Tropical correspondence for smooth del Pezzo log Calabi-Yau pairs}, to appear in JAG, \href{https://arxiv.org/abs/2005.14018}{arxiv:2005.14018}, 2020, 73p.

\bibitem{GRZ} T. Gr\"afnitz, H. Ruddat, E. Zaslow, \emph{The proper Landau-Ginzburg potential is the open mirror map}, in preparation.

\bibitem{HW} L. Herr, J. Wise: \emph{Costello's pushforward formula: Errata and generalization}, \href{https://arxiv.org/abs/2103.10348}{arxiv:2103.10348}, 2021, 17p.


\bibitem{IP} E.-N. Ionel and T. Parker,  \emph{The symplectic sum formula for Gromov-Witten invariants},  Ann. of Math. (2) {\bf 159} (2004), no. 3, 935--1025.

\bibitem{K.Kato}  K. Kato, \emph{Logarithmic structures of Fontaine-Illusie}, 
Algebraic analysis, geometry, and number theory (Baltimore, MD, 1988), 191--224, Johns Hopkins Univ. Press, Baltimore, MD, 1989.

\bibitem{F.Kato} F. Kato, \emph{Log smooth deformation and moduli of log smooth curves},  Internat. J. Math.  {\bf 11} (2000), no. 2, 215--232.

\bibitem{Ki} B. Kim, \emph{Logarithmic stable maps}, New developments in algebraic geometry, integrable systems and mirror symmetry  (RIMS, Kyoto, 2008), 167--200, 
Adv. Stud. Pure Math., 59, Math. Soc. Japan, Tokyo, 2010. 

\bibitem{Kr} A. Kresch, \emph{Cycle groups for Artin stacks}, Invent. Math. 138 (1999) 495--536.

\bibitem{LR} A.-M. Li and Y. Ruan, \emph{Symplectic surgery and Gromov-Witten invariants of Calabi-Yau 3-folds},  Invent. Math.  {\bf 145} (2001), no. 1, 151--218.

\bibitem{Li} J. Li, \emph{A degeneration formula of GW-invariants},  J. Differential Geometry,  {\bf 60} (2002), no. 2, 199--293.

\bibitem{LG} J. Li and G. Tian, \emph{Virtual moduli cycles and Gromov-Witten invariants of algebraic varieties}, J. Amer. Math. Soc. 11 (1998) 119--174.

\bibitem{MR} T. Mandel, H. Ruddat, \emph{Descendant log Gromov-Witten invariants for toric varieties and tropical curves}, arXiv:1612.02402 [math.AG]

\bibitem{MR2} T. Mandel, H. Ruddat, \emph{Tropical quantum field theory, mirror polyvector fields, and multiplicities of tropical curves}, arXiv:1902.07183v2 [math.AG]

\bibitem{MS} J.W. Milnor and J.D. Stasheff. \emph{Characteristic Classes}, Annals of Mathematics Studies, 1974, Princeton University Press, 340 pages.

\bibitem{Man} C. Manolache, \emph{Virtual pull-backs}, J.  Algebraic Geometry {\bf 21} (2012), 201--245.

\bibitem{Mo} S. Mochizuki, \emph{The geometry of the compactification of the Hurwitz scheme}, Publ. Res. Inst. Math. Sci. {\bf 31} (1995), no. 3, 355--441.

\bibitem{NS} T. Nishinou and B. Siebert, \emph{Toric degenerations of toric varieties and tropical curves}, Duke Mathematical Journal, {\bf 135} (2006), no. 1, 1--51.

\bibitem{Ol} M. Olsson, \emph{Logarithmic geometry and algebraic stacks},  Ann. Sci. \'Ecole Norm. Sup.  (4) {\bf 36} (2003), no. 5, 747--791.

\bibitem{Ol-cotangent} M. Olsson, \emph{The logarithmic cotangent complex},  Math. Ann. {\bf  333}  (2005), 859--931.

\bibitem{Pa} B. Parker, \emph{Gromov-Witten invariants of exploded manifolds}, \href{https://arxiv.org/abs/1102.0158}{arxiv:1102.0158}

\bibitem{Pa2} B. Parker, \emph{Tropical gluing formulae for Gromov-Witten invariants}, \href{https://arxiv.org/abs/1703.05433}{arxiv:1703.05433}

\bibitem{Ran} D. Ranganathan, \emph{Logarithmic Gromov-Witten theory with expansions}, \href{https://arxiv.org/abs/1903.09006}{arxiv:1903.09006}

\bibitem{Te} M. F. Tehrani, \emph{Towards a Degeneration Formula for the Gromov-Witten Invariants of Symplectic Manifolds}, \href{https://arxiv.org/abs/1710.00599}{arxiv:1710.00599}

\bibitem{Wu} Y. Wu: \emph{Splitting of Gromov-Witten Invariants with Toric Gluing Strata},  \href{https://arxiv.org/abs/2103.14780}{arxiv:2103.14780}, 45 pages.


\end{thebibliography}
\end{document}